\documentclass[11pt]{amsart}

\usepackage[T1]{fontenc}
\usepackage[utf8]{inputenc}
\usepackage{lmodern}
\usepackage{amsmath,amssymb,amsthm,mathtools}
\usepackage{mathrsfs}
\usepackage{tikz}
\usetikzlibrary{arrows.meta}
\usepackage{float}
\usepackage{microtype}
\usepackage[a4paper,margin=32mm]{geometry}
\usepackage[hidelinks]{hyperref}
\usepackage[nameinlink,noabbrev]{cleveref}

\newcommand{\Sym}{\mathfrak S}
\newcommand{\CC}{\mathbb C}

\newcommand{\scalar}[2]{\left\langle #1,#2\right\rangle}

\theoremstyle{plain}
\newtheorem{theorem}{Theorem}[section]
\newtheorem{proposition}[theorem]{Proposition}
\newtheorem{lemma}[theorem]{Lemma}

\newtheorem{conjecture}[theorem]{Conjecture}
\crefname{conjecture}{conjecture}{conjectures}
\Crefname{conjecture}{Conjecture}{Conjectures}

\theoremstyle{definition}

\newtheorem{example}[theorem]{Example}

\theoremstyle{remark}
\newtheorem{remark}[theorem]{Remark}

\title{Stable Symmetric Series, Differential Operators, and Jack Deformations}

\author[J.-Y. Thibon]{Jean-Yves Thibon}
\address{Laboratoire d’Informatique Gaspard-Monge,
Université Gustave Eiffel,
CNRS, ESIEE Paris,
F-77454 Marne-la-Vallée, France}
\email[Jean-Yves Thibon]{jean-yves.thibon@univ-eiffel.fr} 

\subjclass[2020]{Primary 05E05; Secondary 05E10, 20C30, 15B52}
\keywords{Symmetric functions, stable conjugacy classes, shifted symmetric
functions, Jack polynomials, Goulden--Jackson product, differential
operators, random matrix integrals}

\date{}

\begin{document}

\begin{abstract}
We introduce stable symmetric series which encode normalized
conjugacy classes and their multiplication operators simultaneously
for all symmetric groups.  This gives a direct route from the
Ivanov--Kerov algebra to shifted symmetric functions and to
differential operators in $U(\mathcal W_{1+\infty})$.  Using the
Goulden--Jackson product, we extend the construction to Jack
polynomials, recover shifted Jack eigenvalues and Pieri-type
relations, and obtain explicit candidate operators in degrees three
and four.  Their real and quaternionic specializations to zonal polynomials 
are verified by Gaussian matrix integrals and exhaustive Wick enumeration.
\end{abstract}

\maketitle

\section{Introduction}
\label{sec:introduction}

The multiplication of conjugacy classes in symmetric groups is one
of the oldest concrete problems in the representation theory of
$\Sym_n$.  Through the Frobenius characteristic map, multiplication
in the center of $\CC\Sym_n$ becomes an internal product on symmetric
functions, that we shall call the class product to avoid confusion with the 
$*$-product induced by the product of characters.
  
Character theory gives, in principle, all its structure
constants, and some of the first explicit formulas go back to
Frobenius \cite{Frobenius}.  A familiar elementary example is
\begin{equation}\label{eq:intro-transposition-square}
 C_{(2,1^{n-2})}^{,2}
 =\binom n2 C_{(1^n)}
  +3C_{(3,1^{n-3})}
  +2C_{(2,2,1^{n-4})},
\end{equation}
where $C_\lambda$ denotes the sum of the permutations of cycle type
$\lambda$; terms indexed by nonexistent partitions are understood to
be zero.  Formula \eqref{eq:intro-transposition-square} already
exhibits the two features that later stability theories had to
separate: a coefficient polynomial in $n$, and coefficients that
become independent of $n$ after a suitable normalization of classes.

Farahat and Higman \cite{FH} proved that, when conjugacy classes are
indexed by reduced cycle type, their structure constants are
integer-valued polynomials in $n$.  Their universal filtered algebra
simultaneously controls the centers of all symmetric group algebras.
A different stability phenomenon emerged from the normalized classes
introduced in the work of Kerov and Olshanski \cite{KO}.  Kerov
observed that their products should have structure constants
independent of $n$; Ivanov and Kerov proved this assertion by
constructing the algebra of partial permutations \cite{IK}.  This
algebra is closely related to the algebra of shifted symmetric
functions of Okounkov and Olshanski \cite{OO}.

In a parallel development, Katriel proposed explicit forms for the
central characters and for the operators representing multiplication
by stable families of conjugacy classes \cite{Katriel}.  Goupil,
Poulalhon, and Schaeffer proved these conjectures and obtained the
corresponding differential operators on symmetric functions
\cite{GPS}.  The first member of this family is Goulden's
cut-and-join operator for multiplication by the sum of all
transpositions \cite{Goulden}.

In our earlier paper with Lascoux \cite{LT}, these questions were
approached by vertex operators.  If
\[
 \xi_j=\sum_{i<j}(i,j),
 \qquad
 \Xi_n=(\xi_1,\ldots,\xi_n),
\]
are the Jucys--Murphy elements, a single vertex operator implements,
simultaneously for every $n$, multiplication by the power sums
$p_r(\Xi_n)$.  Its coefficients generate the basic representation of
$\mathcal W_{1+\infty}$.  The differential operators associated with
the normalized stable classes can then be recovered from these
coefficients; in particular, they belong to the image of
$U(\mathcal W_{1+\infty})$.  The same paper gave another derivation of
the Goupil--Poulalhon--Schaeffer operators from Gaussian matrix
integrals and Wick's formula.

Equivalent operator algebras have subsequently appeared in other
languages.  Mironov, Morozov, and Natanzon associated differential
operators $W(\Delta)$ with Young diagrams and obtained a commutative
algebra isomorphic to the stable algebra of conjugacy classes
\cite{MMN}.  We use the word ``equivalent'' here up to the changes of
normalization customary in the Hurwitz and symmetric-function
literatures.  Another occurrence comes from quantum integrable
systems.  Dubrovin's symplectic field theory of a disk produces the
quantum dispersionless KdV, or quantum Hopf, hierarchy
\cite{DubrovinSFT}; the representation-theoretic description of this
hierarchy identifies its Hamiltonians with multiplication operators
built from Young--Jucys--Murphy elements \cite{RuzzaYang}.  The
symplectic-field-theoretic setting itself goes back to
Eliashberg--Givental--Hofer and is surveyed in Eliashberg's ICM
lecture \cite{EGH,EliashbergICM}.  These later appearances illustrate
that the stable class operators are not tied to one construction:
they sit at the intersection of symmetric-group representation
theory, Hurwitz theory, vertex algebras, and integrable systems.

The purpose of the present paper is to show that much of this theory
becomes simpler after introducing, for a homogeneous symmetric
function $f$ of degree $m$, the stable series
\begin{equation}\label{eq:intro-stable-series}
 \widehat f=\frac{m!f}{(1-p_1)^{m+1}}.
\end{equation}
For $f=p_\mu$, these series represent the normalized stable classes.
More generally, they allow one to pass directly between stable
products, shifted symmetric functions, and normally ordered
differential operators.  Since the multiplication operators for the
power sums $p_r(\Xi_n)$ belong to
$U(\mathcal W_{1+\infty})$, the same is true of the operators attached
to all the series $\widehat f$.

We then replace the classical class product by the
Goulden--Jackson product $\times_\alpha$.  This gives a Jack
deformation of the stable-series formalism.  The operators associated
with $\widehat P'_\mu$ are diagonal on the Jack basis, with eigenvalues
given by shifted Jack polynomials.  This point of view yields the
shifted Pieri formula and a recurrence of
Alexandersson--F\'eray type.  It also leads to explicit low-degree
differential operators.  The formula for $\Delta_2(\alpha)$ is exact;
the formulas displayed for $\Delta_3(\alpha)$ and
$\Delta_4(\alpha)$ are retained as computer-assisted conjectures,
with their status stated explicitly.

Finally, we study the three classical matrix specializations.  The
complex model gives $\alpha=1$, the real model gives $\alpha=2$, and
the quaternionic model gives $\alpha=1/2$.  In the real case, Wick
contractions are perfect matchings.  In the quaternionic case they
become signed M\"obius graphs, whose powers of two and signs are
controlled by their Euler characteristic.  Exhaustive enumeration in
degrees three and four agrees term by term with the corresponding
specializations of the proposed operators.  These calculations
explain why terms invisible in the permutation model occur away from
$\alpha=1$, but they are not used to claim a proof for indeterminate
$\alpha$.

The paper is organized as follows.  Section~\ref{sec:background}
recalls the vertex-operator and $\mathcal W_{1+\infty}$ background.
Sections~\ref{sec:stable-series} and
\ref{sec:shifted-symmetric-functions} develop the classical stable
series and their relation with shifted symmetric functions.
Sections~\ref{sec:goulden-jackson}--
\ref{sec:jack-differential-operators} introduce the
Goulden--Jackson deformation, shifted Jack functions, and the
associated differential operators.  The low-degree formulas are
given in Section~\ref{sec:low-degree-jack-operators}.  The real and
quaternionic matrix models are treated in
Sections~\ref{sec:real-matrix-integrals} and
\ref{sec:quaternionic-matrix-integrals}.  The precise status of the
results and the remaining problems are summarized in
Section~\ref{sec:conclusions}.



\section{Background}\label{sec:background}

In this section, we recall the results of \cite{LT} that will be used
throughout the paper.  Our main purpose is to explain why the stable
series introduced below have natural differential-operator
realizations, and why all these operators belong to the same
$\mathcal W_{1+\infty}$ framework as the operators associated with
power sums of Jucys--Murphy elements.

\subsection{Class multiplication and Jucys--Murphy elements}

Let $\Lambda$ be the algebra of symmetric functions, endowed with the
Hall scalar product, and let $\Lambda_n$ denote its homogeneous
component of degree $n$.  The Frobenius characteristic identifies
$\Lambda_n$ with the center of the group algebra $\mathbb C\mathfrak
S_n$.  We denote by $\times$ the product transported to $\Lambda_n$
from multiplication in this center.  Thus
\begin{equation}\label{eq:internal-schur}
  s_\lambda\times s_\mu
  =\delta_{\lambda\mu}\frac{1}{f^\lambda}s_\lambda,
  \qquad \lambda,\mu\vdash n,
\end{equation}
where $f^\lambda$ is the number of standard Young tableaux of shape
$\lambda$.

The Jucys--Murphy elements are
\begin{equation}
  \xi_j=\sum_{i<j}(i,j),\qquad 1\leq j\leq n,
\end{equation}
and we write $\Xi_n=(\xi_1,\ldots,\xi_n)$.  Symmetric polynomials in
the $\xi_i$ are central.  On the irreducible representation indexed
by $\lambda$, the element $f(\Xi_n)$ acts by the scalar
$f(C(\lambda))$, where $C(\lambda)$ is the multiset of contents of
$\lambda$.

The main construction of \cite{LT} is a single operator acting on
$\Lambda$ whose restriction to $\Lambda_n$ realizes multiplication
by the generating series
\begin{equation}
  \sum_{r\geq1}p_r(\Xi_n)\frac{u^r}{r!}
  =\sum_{i=1}^n\bigl(e^{u\xi_i}-1\bigr).
\end{equation}
Put $q=e^u$ and let $\mathcal E=\sum_{r\geq1}p_rD_{p_r}$ be the Euler
operator.  In bosonic notation, this operator is
\begin{equation}\label{eq:LT-Dq}
 \mathcal D(q)
 =\frac{q}{(q-1)^2}
   \sum_{m\geq1}q^{-m}
   h_m[(q-1)X]D_{h_m[(q-1)X]}-\mathcal E.
\end{equation}
Equivalently,
\begin{equation}\label{eq:LT-zero-mode}
  \mathcal D(q)
  =\frac{V_0(q)-1}{(q-1)(1-q^{-1})}-\mathcal E,
\end{equation}
where $V_0(q)$ is the zero mode of the vertex operator
\begin{equation}\label{eq:LT-vertex}
 \begin{split}
  V(z;q)
  &=\sigma_z[(q-1)X]
    D_{\sigma_{1/z}[(1-q^{-1})X]}\\
  &=:\!\exp\left(
       \sum_{r\neq0}\frac{1-q^{-r}}{r}z^{-r}\alpha_r
     \right)\!:\ .
 \end{split}
\end{equation}
Here $\alpha_{-r}=p_r$, $\alpha_r=D_{p_r}=r\,\partial/\partial p_r$
for $r>0$.

Writing
\begin{equation}\label{eq:D-expansion}
  \mathcal D(e^u)=\sum_{r\geq1}\mathcal D_r\frac{u^r}{r!},
\end{equation}
the restriction of $\mathcal D_r$ to $\Lambda_n$ is multiplication by
$p_r(\Xi_n)$.  Thus the infinitely many class-multiplication
operators, for all symmetric groups simultaneously, are encoded by
the coefficients of one vertex operator.

\subsection{The $\mathcal W_{1+\infty}$ realization}

Under the boson--fermion correspondence, the vertex operator
\eqref{eq:LT-vertex} belongs to the basic representation of the
universal central extension $\widehat{\mathcal D}$ of the Lie algebra
of differential operators on the circle.  We use the standard
notation
\begin{equation}
  \widehat{\mathcal D}=\mathcal W_{1+\infty}.
\end{equation}
More explicitly, if
\begin{equation}
  W_k(q)=\widehat r_0\left(\sum_{i\in\mathbb Z}q^iE_{i,i+k}\right),
\end{equation}
then the modes of $V(z;q)$ are proportional to the $W_k(q)$, and
their expansion at $q=1$ yields the standard generators of
$\mathcal W_{1+\infty}$.  In particular, every coefficient
$\mathcal D_r$ in \eqref{eq:D-expansion} belongs to the image of
$U(\mathcal W_{1+\infty})$ in the charge-zero bosonic Fock space.

It is useful to introduce the commuting diagonal operators
\begin{equation}\label{eq:shifted-P}
  P_r=\widehat r_0\left(\sum_{i\in\mathbb Z}i^rE_{ii}\right).
\end{equation}
Their eigenvalues on $s_\lambda$ are the shifted power sums
\begin{equation}\label{eq:shifted-power-sums}
  \widetilde p_r(\lambda)
  =\sum_{i\geq1}\bigl((\lambda_i-i)^r-(-i)^r\bigr).
\end{equation}
The two families of operators are related by
\begin{equation}\label{eq:P-D-relation}
  P_r=\sum_{j=0}^{r-1}\binom{r}{j}\mathcal D_j,
  \qquad \mathcal D_0=\mathcal E.
\end{equation}
Consequently, the algebra of polynomials in the $P_r$ is a
commutative subalgebra of the image of
$U(\mathcal W_{1+\infty})$.

\subsection{Stable series and their operators}

Let $f\in\Lambda_m$ be homogeneous of degree $m$.  We associate with
$f$ the stable symmetric series
\begin{equation}\label{eq:hat-f}
  \widehat f
  =\frac{m!\,f}{(1-p_1)^{m+1}}
  =\sum_{n\geq m}(n)_m f\,p_1^{n-m},
\end{equation}
where $(n)_m=n(n-1)\cdots(n-m+1)$.  In particular,
\begin{equation}
  \widehat p_\rho
  =\sum_{n\geq |\rho|}(n)_{|\rho|}
     p_{\rho,1^{n-|\rho|}}.
\end{equation}
When $\rho$ has no parts equal to $1$, the degree-$n$ component of
$\widehat p_\rho$ is the Frobenius characteristic of the normalized
conjugacy class of cycle type $\rho1^{n-|\rho|}$.

For every partition $\rho$, let $\mathcal A_\rho$ be the operator
whose restriction to $\Lambda_n$ is $\times$-multiplication by the
degree-$n$ component of $\widehat p_\rho$.  These operators are
simultaneously diagonal in the Schur basis:
\begin{equation}\label{eq:A-rho-eigenvalue}
  \mathcal A_\rho s_\lambda
  =f_\rho(\lambda)s_\lambda,
\end{equation}
where $f_\rho$ is the normalized central character
\begin{equation}
  f_\rho(\lambda)
  =(n)_{|\rho|}
    \frac{\chi^\lambda_{\rho,1^{n-|\rho|}}}{f^\lambda}.
\end{equation}

Kerov and Olshanski proved that $f_\rho$ is a shifted symmetric
function.  Hence it is a polynomial in the shifted power sums
\eqref{eq:shifted-power-sums}.  Since the $P_r$ have precisely these
eigenvalues, there exists a polynomial $F_\rho$ such that
\begin{equation}\label{eq:A-polynomial-P}
  \mathcal A_\rho=F_\rho(P_1,P_2,\ldots).
\end{equation}
Equations \eqref{eq:P-D-relation} and \eqref{eq:A-polynomial-P}
imply the fact that will be essential below:
\begin{equation}\label{eq:A-in-W}
  \boxed{
  \mathcal A_\rho\in
  \operatorname{Im}U(\mathcal W_{1+\infty})
  \quad\text{for every partition }\rho,}
\end{equation}
where $\operatorname{Im}$ denotes the image under the representation ${\widehat r}_0$.
More generally, if
\begin{equation}
  f=\sum_{\rho\vdash m}c_\rho p_\rho,
\end{equation}
then
\begin{equation}
  \widehat f=\sum_{\rho\vdash m}c_\rho\widehat p_\rho
\end{equation}
and the associated multiplication operator is
\begin{equation}
  \mathcal A_f=\sum_{\rho\vdash m}c_\rho\mathcal A_\rho.
\end{equation}
It follows at once from \eqref{eq:A-in-W} that
\begin{equation}\label{eq:Af-in-W}
  \boxed{
  \mathcal A_f\in
  \operatorname{Im}U(\mathcal W_{1+\infty})
  \quad\text{for every homogeneous }f\in\Lambda.}
\end{equation}
Thus the differential operators attached to the stable series
$\widehat f$ do not form a new family external to the vertex-operator
formalism of \cite{LT}: they all lie in the same
$\mathcal W_{1+\infty}$ representation as the operators associated
with power sums of Jucys--Murphy elements.

The purpose of the next sections is to reverse the usual logic.
Rather than deriving stability from the theory of shifted symmetric
functions, we shall start from the explicit differential operators
and the elementary form \eqref{eq:hat-f} of the stable series.  This
will recover directly the Ivanov--Kerov algebra and, subsequently,
the shifted-symmetric-function formalism.

\section{Stable symmetric series and normalized classes}
\label{sec:stable-series}

We now use the differential operators recalled in
\cref{sec:background} to give a direct realization of the stable
algebra of normalized conjugacy classes.  The argument is elementary:
the action of every normally ordered monomial differential operator
on the rational series $\widehat p_\mu$ can be computed explicitly,
and the result is again an integral linear combination of series of
the same form.

\subsection{The completed class product}

We work in the degree completion
\begin{equation}
  \widehat\Lambda=\prod_{n\geq0}\Lambda_n.
\end{equation}
The class product $\times$ extends componentwise to
$\widehat\Lambda$.  Thus, if $F=\sum_nF_n$ and $G=\sum_nG_n$, with
$F_n,G_n\in\Lambda_n$, then
\begin{equation}
  F\times G=\sum_{n\geq0}F_n\times G_n.
\end{equation}

For a partition $\mu$ of size $m$, set
\begin{equation}\label{eq:stable-p-mu}
  \widehat p_\mu
  =\frac{m!p_\mu}{(1-p_1)^{m+1}}
  =\sum_{n\geq m}(n)_m p_{\mu,1^{n-m}}.
\end{equation}
Notice that the parts equal to $1$ occurring in $\mu$ are retained:
they encode distinguished fixed points in the corresponding partial
permutation.  Let
\begin{equation}
  \mathscr A=\operatorname{span}_{\mathbb C}
  \{\widehat p_\mu\mid \mu\text{ a partition}\}
  \subset\widehat\Lambda.
\end{equation}
The series $\widehat p_\mu$ are linearly independent, since their
lowest homogeneous components are $m!p_\mu$.

For later reference, a dual family can also be written explicitly.
If $\rho$ has no parts equal to $1$, $|\rho|=r$, and $k\geq0$, put
\begin{equation}\label{eq:dual-stable-p}
  b_{\rho1^k}
  =\sum_{i=0}^k
    \frac{(-1)^{k-i}}{(k-i)!}
    \frac{p^*_{\rho1^i}}{(r+i)!},
\end{equation}
where $p^*_\lambda=p_\lambda/z_\lambda$ is dual to $p_\lambda$ for
the Hall scalar product.  A direct coefficient extraction gives
\begin{equation}
  \scalar{\widehat p_\alpha}{b_\beta}
  =\delta_{\alpha\beta}.
\end{equation}

\subsection{Differential operators for normalized classes}

Let $\mu\vdash m$, and choose a permutation $\sigma\in\Sym_m$ of
cycle type $\mu$.  For $L=(l_1,\ldots,l_m)\in(\mathbb Z_{>0})^m$
and $\tau\in\Sym_m$, define
\begin{equation}\label{eq:p-L-tau}
  p_L^\tau
  =\prod_{c\in\operatorname{Cycles}(\tau)}
    p_{\sum_{i\in c}l_i}.
\end{equation}
The operator implementing $\times$-multiplication by
$\widehat p_\mu$ is
\begin{equation}\label{eq:A-mu-explicit}
  \mathcal A_\mu
  =\sum_{l_1,\ldots,l_m\geq1}
    \sum_{\tau\in\Sym_m}
    p_L^{\sigma\tau}D_{p_L^\tau}.
\end{equation}
Although the sum is infinite, its action on each homogeneous
component is finite.  Formula \eqref{eq:A-mu-explicit} is the
differential form of the Wick expansion obtained in \cite{LT}; for
reduced cycle types it is equivalent, up to the usual factor
$z_\mu$, to the operators of Goupil--Poulalhon--Schaeffer
\cite{GPS}.

Every summand of \eqref{eq:A-mu-explicit} is a degree-preserving
normally ordered monomial $p_\alpha D_{p_\beta}$, with a
nonnegative integral coefficient.  The following observation is the
basic stability mechanism.

\begin{lemma}\label{lem:monomial-stability}
Let $\rho$ be a partition without parts equal to $1$, let
$r=|\rho|$, and let $l,k\geq0$.  Suppose that
$\beta=\bar\beta1^k$, where $\bar\beta$ has no parts equal to $1$,
and that $p_\alpha D_{p_\beta}$ preserves the usual degree.  Then
\begin{equation}
 p_\alpha D_{p_\beta}\widehat p_{\rho1^l}
\end{equation}
is a nonnegative integral linear combination of stable series
$\widehat p_\gamma$.
\end{lemma}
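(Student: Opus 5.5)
The plan is a direct computation: the operator $p_\alpha D_{p_\beta}$ splits into a part acting only on $p_\rho$ and a part acting only on the $p_1$-dependence, and each term of the result reassembles into a single stable series.

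First I would write $m=r+l$, so that
\[
\widehat p_{\rho1^l}=m!\,p_\rho\,p_1^{l}(1-p_1)^{-(m+1)}.
\]
I would then factor
\[
D_{p_\beta}=D_{p_{\bar\beta}}\,D_{p_1}^{k},
\]
where $D_{p_b}=b\,\partial/\partial p_b$. Since $\bar\beta$ and $\rho$ have no parts equal to $1$, the operator $D_{p_{\bar\beta}}$ differentiates only with respect to $p_j$ with $j\geq2$. It therefore acts on $p_\rho$ alone and commutes past the factor $p_1^l(1-p_1)^{-(m+1)}$. Conversely, $D_{p_1}^k=\partial_{p_1}^k$ acts only on that factor.

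For the $\rho$ part, $D_{p_{\bar\beta}}p_\rho$ vanishes unless $\bar\beta\subseteq\rho$ as multisets. When $\bar\beta\subseteq\rho$, it equals $c\,p_{\rho\setminus\bar\beta}$, where $c$ is a product of parts times falling factorials of multiplicities, hence a nonnegative integer.

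For the $p_1$ part, I would apply Leibniz's rule:
\[
\partial_{p_1}^k\Bigl(p_1^l(1-p_1)^{-(m+1)}\Bigr)
=\sum_{j=0}^{k}\binom kj (l)_j\,p_1^{l-j}\,\frac{(m+k-j)!}{m!}\,(1-p_1)^{-(m+1+k-j)}.
\]
This uses $\partial^{i}(1-p_1)^{-(m+1)}=\frac{(m+i)!}{m!}(1-p_1)^{-(m+1+i)}$ with $i=k-j$.

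Multiplying by $p_\alpha$, the $j$-th term becomes
\[
c\binom kj(l)_j\,(m+k-j)!\;\frac{p_\gamma}{(1-p_1)^{m+k-j+1}},
\qquad
\gamma=\alpha\cup(\rho\setminus\bar\beta)\cup1^{l-j}.
\]

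The key step is checking that this is exactly $c\binom kj(l)_j\,\widehat p_\gamma$. Degree preservation means $|\alpha|=|\beta|=|\bar\beta|+k$. Hence
\[
|\gamma|=|\alpha|+r-|\bar\beta|+l-j=m+k-j,
\]
so both the factorial $(m+k-j)!$ and the exponent $m+k-j+1$ match the normalization $|\gamma|!\,p_\gamma/(1-p_1)^{|\gamma|+1}$ in \eqref{eq:stable-p-mu}.

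The coefficients $c$, $\binom kj$ and $(l)_j$ are nonnegative integers, which gives the claim. Parts of $\alpha$ equal to $1$ cause no trouble, because retained $1$'s are allowed in $\gamma$.

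I expect the main obstacle to be bookkeeping rather than any conceptual difficulty. The substantive points are that the rising-factorial coefficient from differentiating $(1-p_1)^{-(m+1)}$ combines with the prefactor $m!$ to give exactly $|\gamma|!$, and that the degree hypothesis is precisely what aligns the exponent of $(1-p_1)$ with $|\gamma|+1$.
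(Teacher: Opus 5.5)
Your proposal is correct and follows essentially the same route as the paper: the factor $D_{p_{\bar\beta}}$ acts only on $p_\rho$, Leibniz's rule handles $D_{p_1}^k$ on $p_1^l(1-p_1)^{-(m+1)}$, and the degree hypothesis makes the factorial $(m+k-j)!$ and the exponent $m+k-j+1$ coincide with $|\gamma|!$ and $|\gamma|+1$. You are also slightly more explicit than the paper in noting that $D_{p_{\bar\beta}}p_\rho$ is a single monomial with a nonnegative integer coefficient.
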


\begin{proof}
Put $N=r+l$.  Since $D_{p_1}=\partial/\partial p_1$, the Leibniz
rule gives
\begin{align}
&p_\alpha D_{p_{\bar\beta}p_1^k}
 \frac{N!p_\rho p_1^l}{(1-p_1)^{N+1}}
\notag\\
&\quad =p_\alpha(D_{p_{\bar\beta}}p_\rho)N!
 \sum_{i+j=k}\binom{k}{j}(l)_j p_1^{l-j}
 \frac{(N+i)_i}{(1-p_1)^{N+i+1}}.
\label{eq:monomial-action}
\end{align}
Here $(N+i)_i=(N+i)!/N!$.  Consequently the factorials in
\eqref{eq:monomial-action} combine to give
\begin{equation}
 \sum_{i+j=k}\binom{k}{j}(l)_j
 \frac{(N+i)!,
 p_\alpha(D_{p_{\bar\beta}}p_\rho)p_1^{l-j}}
 {(1-p_1)^{N+i+1}}.
\label{eq:monomial-action-normalized}
\end{equation}
Because the operator preserves degree, every power-sum monomial in
the numerator of the summand indexed by $i+j=k$ has degree $N+i$.
Moreover, $D_{p_{\bar\beta}}p_\rho$ is a nonnegative integral linear
combination of power-sum monomials.  Each term in
\eqref{eq:monomial-action-normalized} is therefore a nonnegative
integral multiple of some $\widehat p_\gamma$.
\end{proof}

\subsection{The stable algebra}

\begin{theorem}\label{thm:stable-algebra}
The vector space $\mathscr A$ is a commutative subalgebra of
$(\widehat\Lambda,\times)$.  More precisely, there exist
nonnegative integers $d_{\mu\nu}^{\lambda}$, independent of the
ambient degree, such that
\begin{equation}\label{eq:stable-structure-constants}
  \widehat p_\mu\times\widehat p_\nu
  =\sum_\lambda d_{\mu\nu}^{\lambda}\widehat p_\lambda.
\end{equation}
Only finitely many terms occur, and
\begin{equation}\label{eq:stable-filtration}
  \max(|\mu|,|\nu|)\leq|\lambda|leq|\mu|+|\nu|
\end{equation}
whenever $d_{\mu\nu}^{\lambda}\neq0$.
\end{theorem}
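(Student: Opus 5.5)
The plan is to combine the explicit operator \eqref{eq:A-mu-explicit} with Lemma~\ref{lem:monomial-stability}. By definition of $\mathcal A_\mu$, for every $n$ the degree-$n$ component of $\widehat p_\mu\times G$ equals $\mathcal A_\mu G_n$. Since $\mathcal A_\mu$ preserves degree and $\times$ is taken componentwise on $\widehat\Lambda$, this gives $\widehat p_\mu\times G=\mathcal A_\mu G$ for every $G\in\widehat\Lambda$. Apply this with $G=\widehat p_\nu$. Each summand $p_L^{\sigma\tau}D_{p_L^\tau}$ of \eqref{eq:A-mu-explicit} is a degree-preserving normally ordered monomial $p_\alpha D_{p_\beta}$. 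By Lemma~\ref{lem:monomial-stability}, written with $\nu=\rho1^l$, such a monomial sends $\widehat p_\nu$ to a nonnegative integral combination of series $\widehat p_\gamma$. Summing over $L$ and $\tau$ then gives \eqref{eq:stable-structure-constants}, with $d^\lambda_{\mu\nu}\in\mathbb Z_{\ge0}$. These constants come from an identity in $\widehat\Lambda$ and the $\widehat p_\lambda$ are linearly independent, so they cannot depend on the ambient degree $n$.

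The step that needs care is finiteness, which comes from degree bounds. The monomial $p_L^{\sigma\tau}D_{p_L^\tau}$ has differentiation degree $|L|=\sum l_i\ge m=|\mu|$. In the proof of the lemma, the summand indexed by $i+j=k$ produces $\widehat p_\gamma$ with $|\gamma|=N+i$, where $N=|\nu|$ and $k$ counts the parts $1$ of $p_L^\tau$. Write $D_{p_{\bar\beta}}$ for the part of the differentiation that acts on $p_\rho$. It lowers degree by $|\bar\beta|=|L|-k$, and it must not annihilate $p_\rho$, so $|L|-k\le|\rho|\le N$. Hence $|\gamma|=N+i\le N+k\le N+|L|-(|L|-k)$. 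To bound $|\gamma|$ by $|\mu|+|\nu|$, I will use the finer form $|\gamma|=N+i$ with $i\le k$, together with the following observation: a part $l_c=1$ of $L$ lying in a fixed point of $\tau$ forces $k\ge1$, and each unit of $k$ uses at least one index of $L$. Thus $k\le m$, giving $|\gamma|\le N+k\le|\nu|+|\mu|$.

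For the lower bound, $i\ge0$ gives $|\gamma|\ge N=|\nu|$. By commutativity, proved below, the same argument with $\mu$ and $\nu$ swapped gives $|\lambda|\ge|\mu|$, since the $d^\lambda_{\mu\nu}$ are uniquely determined. Only finitely many $\gamma$ satisfy $|\gamma|\le|\mu|+|\nu|$, so the sum in \eqref{eq:stable-structure-constants} is finite. The infinitely many $L$ contribute only finitely many nonzero terms, because $D_{p_{\bar\beta}}p_\rho\ne0$ bounds $|L|$ by $|\mu|+|\nu|$.

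Commutativity and associativity are inherited: $\times$ on each $\Lambda_n$ is the transported product of the commutative algebra $Z(\CC\Sym_n)$, hence commutative and associative, and these properties pass componentwise to $\widehat\Lambda$. Closure of $\mathscr A$ under $\times$ follows from \eqref{eq:stable-structure-constants} by bilinearity. The unit is $\widehat p_{\emptyset}=1/(1-p_1)=\sum_n p_1^n$, which is the identity of each center. The main obstacle I expect is the upper bound $k\le m$ in the filtration argument. I would first try to justify it cleanly by counting cycles of $\tau$ supported on indices with $l_c=1$. If that becomes messy, I would fall back on the partial-permutation interpretation: the support of the product of two partial permutations has size at most $|\mu|+|\nu|$.
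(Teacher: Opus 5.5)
Your proposal is correct and is essentially the paper's proof: write $\widehat p_\mu\times\widehat p_\nu=\mathcal A_\mu(\widehat p_\nu)$, expand $\mathcal A_\mu$ by \eqref{eq:A-mu-explicit}, apply Lemma~\ref{lem:monomial-stability} to each monomial, and read the degree bounds off \eqref{eq:monomial-action-normalized}, using symmetry in $\mu,\nu$ for the lower bound. The step you flag as the main obstacle is immediate: $D_{p_L^\tau}$ has one factor per cycle of $\tau\in\Sym_m$, hence at most $m$ factors $D_{p_1}$, so $k\le m$ and no fallback to partial permutations is needed. This is exactly the paper's remark that contractions raise the stable degree by at most $|\mu|$; you spell it out more explicitly than the paper does, together with the resulting bound $|L|\le|\mu|+|\nu|$ that gives finiteness.
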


\begin{proof}
By definition,
\begin{equation}
  \widehat p_\mu\times\widehat p_\nu
  =\mathcal A_\mu(\widehat p_\nu).
\end{equation}
Expand $\mathcal A_\mu$ by
\eqref{eq:A-mu-explicit}.  Each summand satisfies the hypotheses of
\cref{lem:monomial-stability}; hence the result belongs to
$\mathscr A$ and has nonnegative integral coefficients in the basis
$\{\widehat p_\lambda\}$.  The number of contributing summands is
finite because a derivative $D_{p_L^\tau}$ can act nontrivially only
when its non-$p_1$ factors occur in the numerator of
$\widehat p_\nu$.  The degree bounds follow directly from
\eqref{eq:monomial-action-normalized}: contractions can increase the
stable degree by at most $|\mu|$, and symmetry in $\mu,\nu$ gives the
lower bound.
\end{proof}

Taking the homogeneous component of degree $n$ in
\eqref{eq:stable-structure-constants} gives the same structure
constants for the normalized conjugacy classes of every symmetric
group $\Sym_n$.  Thus \cref{thm:stable-algebra} recovers the stable
class algebra of Ivanov and Kerov \cite{IK}.  Their partial
permutations give a direct combinatorial interpretation of the
integers $d_{\mu\nu}^{\lambda}$; the argument above shows that their
existence, stability, and nonnegativity are already visible in the
differential operators.

\begin{example}\label{ex:p2-times-p2}
For $\mu=(2)$, formula \eqref{eq:A-mu-explicit} gives twice
Goulden's cut-and-join operator:
\begin{equation}
  \mathcal A_2
  =\sum_{i,j\geq1}
  \left(p_ip_jD_{p_{i+j}}+p_{i+j}D_{p_i}D_{p_j}\right).
\end{equation}
Only three terms can act nontrivially on
$\widehat p_2=2p_2/(1-p_1)^3$, namely
\begin{equation}
  p_1^2D_{p_2},\qquad
  2p_3D_{p_2}D_{p_1},\qquad
  p_2D_{p_1}^2.
\end{equation}
They give
\begin{align}
 \widehat p_2\times\widehat p_2
 &=\mathcal A_2\widehat p_2\notag\\
 &=\frac{4p_1^2}{(1-p_1)^3}
   +\frac{24p_3}{(1-p_1)^4}
   +\frac{24p_2^2}{(1-p_1)^5}\notag\\
 &=2\widehat p_{11}+4\widehat p_3+\widehat p_{22}.
\label{eq:p2-times-p2}
\end{align}
The three terms correspond respectively to retaining two marked fixed
points, joining a transposition to a marked fixed point, and retaining
two transpositions.
\end{example}

The stable algebra will next be identified with the algebra of
shifted symmetric functions.  From the present point of view, shifted
functions arise as the joint eigenvalues of the operators
$\mathcal A_\mu$, rather than as an external device used to prove
stability.

\section{Shifted symmetric functions}
\label{sec:shifted-symmetric-functions}

The stable algebra of \cref{sec:stable-series} admits a second,
spectral realization.  Its elements are simultaneously diagonal in
the Schur basis, and their eigenvalues are precisely the shifted
symmetric functions of Okounkov and Olshanski \cite{OO}.  This point
of view makes the shifted Schur functions, their vanishing theorem,
and their Pieri rule direct consequences of the stable series
$\widehat f$.

\subsection{Eigenvalues of stable series}

Let $f\in\Lambda_m$ be homogeneous, and recall that
\begin{equation}
  \widehat f
  =\frac{m!f}{(1-p_1)^{m+1}}
  =\sum_{n\geq m}(n)_m f p_1^{n-m}.
\end{equation}
For $\lambda\vdash n$, multiplication by the degree-$n$ component of
$\widehat f$ is diagonal on $s_\lambda$.  We denote its eigenvalue by
$\Phi(f)(\lambda)$:
\begin{equation}\label{eq:Phi-definition}
  \widehat f\times s_\lambda
  =\Phi(f)(\lambda)s_\lambda.
\end{equation}
Since $g\times s_\lambda
=\scalar{g}{s_\lambda}s_\lambda/f^\lambda$ for
$g\in\Lambda_n$, one has
\begin{equation}\label{eq:Phi-general}
  \Phi(f)(\lambda)
  =(n)_m\frac{\scalar{f p_1^{n-m}}{s_\lambda}}{f^\lambda}.
\end{equation}

For $f=p_\mu$, formula \eqref{eq:Phi-general} gives the normalized
central character
\begin{equation}\label{eq:p-sharp}
  \Phi(p_\mu)(\lambda)
  =p_\mu^\#(\lambda)
  =(n)_m
    \frac{\chi^\lambda_{\mu,1^{n-m}}}{f^\lambda}.
\end{equation}
For $f=s_\mu$, it gives
\begin{equation}\label{eq:shifted-schur-eigenvalue}
  \Phi(s_\mu)(\lambda)
  =(n)_m\frac{f^{\lambda/\mu}}{f^\lambda},
\end{equation}
where $f^{\lambda/\mu}$ is the number of standard tableaux of skew
shape $\lambda/\mu$.  The right-hand side of
\eqref{eq:shifted-schur-eigenvalue} is the shifted Schur function
$s_\mu^*(\lambda)$.

We recall briefly why this expression is shifted symmetric.  For
$x=(x_1,\ldots,x_N)$, set
\begin{equation}\label{eq:shifted-schur-determinant}
 s_\mu^*(x_1,\ldots,x_N)
 =\frac{
   \det\left[(x_i+N-i)_{\mu_j+N-j}\right]_{1\leq i,j\leq N}}
  {
   \det\left[(x_i+N-i)_{N-j}\right]_{1\leq i,j\leq N}},
\end{equation}
where $(x)_r=x(x-1)\cdots(x-r+1)$.  The quotient in
\eqref{eq:shifted-schur-determinant} is a polynomial symmetric in the
shifted variables $x_i-i$, and it is compatible with the embeddings
obtained by adding a zero variable.  Moreover, the determinantal
formula for the number of standard skew tableaux yields
\begin{equation}
  s_\mu^*(\lambda)
  =(n)_m\frac{f^{\lambda/\mu}}{f^\lambda}.
\end{equation}

It follows that the linear map
\begin{equation}\label{eq:Phi-isomorphism}
  \Phi:\Lambda\longrightarrow\Lambda^*,
  \qquad s_\mu\longmapsto s_\mu^*,
\end{equation}
is the map that assigns to $f$ the eigenvalue of the stable series
$\widehat f$.  In particular, it sends $p_\mu$ to the normalized central
character $p_\mu^\#$.

\begin{proposition}\label{prop:stable-shifted-isomorphism}
The correspondence
\begin{equation}
  \widehat f\longmapsto\Phi(f)
\end{equation}
identifies the stable algebra $(\mathscr A,\times)$ with the algebra
$\Lambda^*$ of shifted symmetric functions.  In particular, if
$\widehat f\times\widehat g=\widehat h$, then
\begin{equation}\label{eq:spectral-product}
  \Phi(h)=\Phi(f)\Phi(g).
\end{equation}
\end{proposition}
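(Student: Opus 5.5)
The plan is to exploit the fact that every element of $\mathscr A$ acts diagonally in the Schur basis. Multiplicativity then comes for free, and the remaining work is to show the map is a linear bijection onto $\Lambda^*$.

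First, define the map on $\mathscr A$ by linearity. Since the $\widehat p_\mu$ form a basis of $\mathscr A$ and $f\mapsto\widehat f$ is linear on each $\Lambda_m$, every element of $\mathscr A$ can be written uniquely as $\sum_m\widehat{f_m}$ with $f_m\in\Lambda_m$, almost all zero. We send it to $\sum_m\Phi(f_m)$. By \eqref{eq:Phi-definition}, for each $\lambda\vdash n$ the degree-$n$ component $F_n$ of $F=\sum_m\widehat{f_m}$ satisfies
\[F_n\times s_\lambda=\Bigl(\sum_m\Phi(f_m)(\lambda)\Bigr)s_\lambda .\]
So the image of $F$ is exactly the function $\lambda\mapsto$ (eigenvalue of $F_{|\lambda|}\times$ on $s_\lambda$).

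Second, multiplicativity. By \cref{thm:stable-algebra}, $\widehat f\times\widehat g$ lies in $\mathscr A$, and its degree-$n$ component is $(\widehat f)_n\times(\widehat g)_n$. The class product is associative and commutative on $\Lambda_n$ (it is the center of $\CC\Sym_n$). Hence
\[((\widehat f)_n\times(\widehat g)_n)\times s_\lambda=(\widehat f)_n\times\bigl(\Phi(g)(\lambda)s_\lambda\bigr)=\Phi(f)(\lambda)\Phi(g)(\lambda)s_\lambda .\]
Consequently the eigenvalue function of $\widehat f\times\widehat g$ is $\Phi(f)\Phi(g)$ pointwise on all partitions $\lambda$. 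If $\widehat f\times\widehat g=\widehat h$, this gives $\Phi(h)(\lambda)=\Phi(f)(\lambda)\Phi(g)(\lambda)$ for every $\lambda$. A shifted symmetric function is determined by its values on partitions, since they are polynomials in the shifted power sums and partitions form a Zariski-dense set in each stable number of variables. This yields \eqref{eq:spectral-product} as an identity in $\Lambda^*$.

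Third, bijectivity. The linear map $\Phi$ sends the basis $s_\mu$ of $\Lambda$ to $s_\mu^*$, and the $s_\mu^*$ form a linear basis of $\Lambda^*$ (Okounkov–Olshanski; their top-degree components are the $s_\mu$). Hence $\Phi:\Lambda\to\Lambda^*$ is a linear isomorphism. Composing with the linear isomorphism $\Lambda\to\mathscr A$, $\sum_m f_m\mapsto\sum_m\widehat{f_m}$, shows that $\widehat f\mapsto\Phi(f)$ is bijective. Together with multiplicativity, and with the unit $\widehat 1=1/(1-p_1)$ mapping to $\Phi(1)=1$, this gives the algebra isomorphism.

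The main obstacle is the passage from pointwise equality of eigenvalues on partitions to equality in $\Lambda^*$, together with the fact that the $s^*_\mu$ form a basis. I would take both from \cite{OO}: a shifted symmetric polynomial in $N$ variables vanishing on all $\lambda$ with $\ell(\lambda)\le N$ is zero, and $s^*_\mu$ has top-degree component $s_\mu$. Alternatively, injectivity can be checked directly. If $\sum_m\Phi(f_m)$ vanishes on all partitions, then for every $n$ the degree-$n$ component $\sum_m(n)_mf_mp_1^{n-m}$ of the series is zero, because its class products with all $s_\lambda$ vanish. Taking the smallest $m$ with $f_m\ne0$ and $n=m$ gives $m!f_m=0$, a contradiction. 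This avoids density arguments for injectivity, and the multiplicativity statement in $\mathscr A$ also follows from it.
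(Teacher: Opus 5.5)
Your proof is correct and follows the same route as the paper. Simultaneous diagonality of the $\times$-multiplication operators in the Schur basis makes the eigenvalues multiply, and bijectivity comes from the linear independence of the stable series together with $\Phi(s_\mu)=s_\mu^*$ and the fact that the $s_\mu^*$ form a basis of $\Lambda^*$. You also make explicit two points the paper leaves implicit: a shifted symmetric function is determined by its values on partitions, and the unit $\widehat 1=1/(1-p_1)$ is sent to $1$.
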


\begin{proof}
The operators of $\times$-multiplication by stable series are
simultaneously diagonal in the Schur basis.  Composition of two such
operators therefore multiplies their eigenvalues.  The map is
injective because the stable series $\widehat s_\mu$ are linearly
independent, and it is onto because the shifted Schur functions form
a basis of $\Lambda^*$.
\end{proof}

This proves, in particular, that the stability theorem of
\cref{thm:stable-algebra} is equivalent to the closure of
$\Lambda^*$ under ordinary multiplication.  The differential proof
has the advantage that it constructs the stable algebra before the
shifted functions are introduced.

\subsection{The vanishing characterization}

Formula \eqref{eq:shifted-schur-eigenvalue} immediately gives the
fundamental vanishing property
\begin{equation}\label{eq:shifted-vanishing}
  s_\mu^*(\lambda)=0
  \qquad\text{unless }\mu\subseteq\lambda.
\end{equation}
On the diagonal,
\begin{equation}\label{eq:shifted-diagonal}
  s_\mu^*(\mu)=\frac{|\mu|!}{f^\mu}=H_\mu,
\end{equation}
where $H_\mu$ is the product of the hook lengths of $\mu$.
Together with the degree bound and the highest homogeneous
component, these relations characterize $s_\mu^*$.

The same argument gives useful information about the structure
constants.  Write
\begin{equation}\label{eq:shifted-LR-definition}
  \widehat s_\mu\times\widehat s_\nu
  =\sum_\lambda c_{\mu\nu}^{\lambda}\widehat s_\lambda.
\end{equation}
Equivalently,
\begin{equation}
  s_\mu^*s_\nu^*
  =\sum_\lambda c_{\mu\nu}^{\lambda}s_\lambda^*.
\end{equation}
Then
\begin{equation}\label{eq:shifted-LR-support}
 c_{\mu\nu}^{\lambda}=0
 \quad\text{unless}\quad
 \mu,\nu\subseteq\lambda,
 \qquad
 \max(|\mu|,|\nu|)\leq|\lambda|\leq|\mu|+|\nu|.
\end{equation}
At the highest degree $|\lambda|=|\mu|+|\nu|$, these constants are
the ordinary Littlewood--Richardson coefficients.

\subsection{The shifted Pieri rule}

The Pieri rule has a particularly short proof in terms of stable
series.

\begin{theorem}[Shifted Pieri rule]\label{thm:shifted-pieri}
If $\mu\vdash m$, then
\begin{equation}\label{eq:shifted-pieri}
  \widehat s_\mu\times\widehat s_1
  =m\widehat s_\mu
   +\sum_{\nu/\mu=\square}\widehat s_\nu.
\end{equation}
Equivalently,
\begin{equation}
  s_\mu^*s_1^*
  =m s_\mu^*+\sum_{\nu/\mu=\square}s_\nu^*.
\end{equation}
\end{theorem}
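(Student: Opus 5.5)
By \cref{prop:stable-shifted-isomorphism}, the two formulations are equivalent. It therefore suffices to prove the identity of eigenvalues
\begin{equation*}
  s_\mu^*(\lambda)\,s_1^*(\lambda)
  =m\,s_\mu^*(\lambda)+\sum_{\nu/\mu=\square}s_\nu^*(\lambda)
\end{equation*}
for every partition $\lambda\vdash n$. Both sides are then eigenvalues of operators that are diagonal in the Schur basis, and the stable series $\widehat s_\nu$ are linearly independent.

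The plan is to use \eqref{eq:shifted-schur-eigenvalue} directly. First, $s_1^*(\lambda)=(n)_1 f^{\lambda/(1)}/f^\lambda=n$, because removing the box $(1,1)$ gives a bijection between standard tableaux of shape $\lambda$ and standard tableaux of shape $\lambda/(1)$. Multiplying the target identity by $f^\lambda$ and substituting the eigenvalue formula, it becomes
\begin{equation*}
  n\,(n)_m f^{\lambda/\mu}
  =m\,(n)_m f^{\lambda/\mu}
   +(n)_{m+1}\sum_{\nu/\mu=\square}f^{\lambda/\nu}.
\end{equation*}
Since $(n)_{m+1}=(n)_m(n-m)$, this reduces to the combinatorial identity
\begin{equation*}
  (n-m)\,f^{\lambda/\mu}=(n-m)\sum_{\nu/\mu=\square}f^{\lambda/\nu},
\end{equation*}
where terms with $\nu\not\subseteq\lambda$ vanish by \eqref{eq:shifted-vanishing}.

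The key step is the identity $f^{\lambda/\mu}=\sum_{\nu/\mu=\square}f^{\lambda/\nu}$, valid whenever $|\lambda|>|\mu|$. It follows by classifying a standard tableau of skew shape $\lambda/\mu$ according to the position of its entry $1$. That entry occupies an addable corner of $\mu$ lying inside $\lambda$; deleting it and decrementing the remaining entries gives a standard tableau of shape $\lambda/\nu$. The remaining cases need separate checks:
\begin{itemize}
\item If $n=m$, both sides vanish because of the factor $n-m$, so there is nothing to prove; this is why the coefficient $m$ appears.
\item If $\mu\not\subseteq\lambda$, every term is zero.
\end{itemize}

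An alternative route stays closer to the spirit of the paper. One would compute $\widehat s_1\times\widehat s_\mu=\mathcal A_1\widehat s_\mu$. Here $\widehat p_1=\widehat s_1$, and formula \eqref{eq:A-mu-explicit} with $m=1$ gives $\mathcal A_1=\sum_l p_lD_{p_l}$, the Euler operator $\mathcal E$. Applying $\mathcal E$ to $m!s_\mu/(1-p_1)^{m+1}$ produces $m\,\widehat s_\mu$ plus $(m+1)!\,p_1s_\mu/(1-p_1)^{m+2}$. The ordinary Pieri rule $p_1s_\mu=\sum_{\nu/\mu=\square}s_\nu$ then identifies the second term as $\sum\widehat s_\nu$. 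This derivation is essentially immediate, and I would present it as the main proof, with the eigenvalue computation as a check.

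The only point requiring care is confirming that the operator for $\widehat p_1$ really is $\mathcal E$. This holds because the degree-$n$ component of $\widehat p_1$ is $n\,p_1^n$, the characteristic of $n$ times the identity class. Its $\times$-action is therefore multiplication by $n$ on $\Lambda_n$, which is exactly how $\mathcal E$ acts there.
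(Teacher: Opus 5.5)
Your main argument is the paper's proof. The paper also uses that the degree-$N$ component $Np_1^N$ of $\widehat s_1$ acts as multiplication by $N$ on $\Lambda_N$, i.e.\ as the Euler operator; it applies this termwise to $\sum_{k\geq0}\frac{(m+k)!}{k!}s_\mu p_1^k$ where you differentiate the closed form $m!\,s_\mu/(1-p_1)^{m+1}$, and then finishes with the ordinary Pieri rule, as you do. Your eigenvalue check via $f^{\lambda/\mu}=\sum_{\nu/\mu=\square}f^{\lambda/\nu}$ is also correct.
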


\begin{proof}
Since $p_1^N$ is the identity for the class product on
$\Lambda_N$, one has
\begin{align}
 \widehat s_\mu\times\widehat s_1
 &=\sum_{k\geq0}\frac{(m+k)!}{k!}s_\mu p_1^k
   \times\sum_{N\geq1}Np_1^N\notag\\
 &=\sum_{k\geq0}(m+k)\frac{(m+k)!}{k!}s_\mu p_1^k\notag\\
 &=m\widehat s_\mu
   +\sum_{k\geq1}\frac{(m+k)!}{(k-1)!}
     (s_\mu s_1)p_1^{k-1}.
\end{align}
The ordinary Pieri rule
$s_\mu s_1=\sum_{\nu/\mu=\square}s_\nu$ now gives
\eqref{eq:shifted-pieri}.
\end{proof}

\subsection{The Molev--Sagan recurrence}

Associativity of the stable product and
\cref{thm:shifted-pieri} yield a recurrence for the shifted
Littlewood--Richardson coefficients.  This is the specialization to
shifted Schur functions of the recurrence of Molev and Sagan
\cite{MoSa}.

\begin{theorem}[Molev--Sagan recurrence]
\label{thm:molev-sagan-recurrence}
Let $\nu\vdash n$ and $\lambda\vdash l$, with $l>n$.  Then
\begin{equation}\label{eq:molev-sagan-recurrence}
 (l-n)c_{\mu\nu}^{\lambda}
 =\sum_{\nu^+/\nu=\square}c_{\mu\nu^+}^{\lambda}
  -\sum_{\lambda/\lambda^-=\square}
       c_{\mu\nu}^{\lambda^-}.
\end{equation}
The initial values at $l=n$ are
\begin{equation}\label{eq:molev-sagan-initial}
  c_{\mu\nu}^{\lambda}
  =\delta_{\lambda\nu}s_\mu^*(\nu)
  \qquad (|\lambda|=|\nu|).
\end{equation}
Together with the support conditions
\eqref{eq:shifted-LR-support}, these relations determine all the
coefficients $c_{\mu\nu}^{\lambda}$.
\end{theorem}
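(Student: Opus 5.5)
The plan is to compute the triple product $\widehat s_\mu\times\widehat s_\nu\times\widehat s_1$ in two ways, using that $(\mathscr A,\times)$ is commutative and associative (\cref{thm:stable-algebra}), and then compare coefficients in the basis $\{\widehat s_\lambda\}$. This basis is linearly independent by \cref{prop:stable-shifted-isomorphism}. Equivalently, we may work in $\Lambda^*$ and expand $s_\mu^*s_\nu^*s_1^*$.

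\emph{First grouping.} Apply \cref{thm:shifted-pieri} to $\widehat s_\nu\times\widehat s_1$:
\[
 \widehat s_\mu\times(\widehat s_\nu\times\widehat s_1)
 =n\sum_\lambda c_{\mu\nu}^\lambda\widehat s_\lambda
  +\sum_{\nu^+/\nu=\square}\sum_\lambda c_{\mu\nu^+}^\lambda\widehat s_\lambda .
\]
\emph{Second grouping.} Apply \cref{thm:shifted-pieri} to each $\widehat s_\kappa\times\widehat s_1$ occurring in $(\widehat s_\mu\times\widehat s_\nu)\times\widehat s_1$:
\[
 \sum_\kappa c_{\mu\nu}^\kappa\Bigl(|\kappa|\,\widehat s_\kappa+\sum_{\lambda/\kappa=\square}\widehat s_\lambda\Bigr).
\]
Now extract the coefficient of $\widehat s_\lambda$ with $|\lambda|=l$. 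The second expression gives $l\,c_{\mu\nu}^\lambda+\sum_{\lambda/\lambda^-=\square}c_{\mu\nu}^{\lambda^-}$, where we have reindexed $\kappa=\lambda^-$. The first gives $n\,c_{\mu\nu}^\lambda+\sum_{\nu^+}c_{\mu\nu^+}^\lambda$. Equating the two and rearranging yields \eqref{eq:molev-sagan-recurrence}. The recurrence in fact holds for all $l$, but the factor $l-n$ only lets us solve for $c_{\mu\nu}^\lambda$ when $l\neq n$.

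For the initial values, take $|\lambda|=|\nu|$. Evaluate $s_\mu^*s_\nu^*=\sum_\kappa c_{\mu\nu}^\kappa s_\kappa^*$ at the point $\lambda$. By the vanishing property \eqref{eq:shifted-vanishing} and the support condition $|\kappa|\ge|\nu|=|\lambda|$ from \eqref{eq:shifted-LR-support}, only $\kappa=\lambda$ survives on the right, giving $c_{\mu\nu}^\lambda H_\lambda$ by \eqref{eq:shifted-diagonal}. On the left, $s_\nu^*(\lambda)$ vanishes unless $\nu\subseteq\lambda$, which together with equal sizes forces $\lambda=\nu$. When $\lambda=\nu$, the left side equals $s_\mu^*(\nu)H_\nu$. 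Dividing by $H_\lambda$ gives \eqref{eq:molev-sagan-initial}.

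For the determination claim, induct on $l-n\ge0$ for fixed $\mu$ and $\nu$. When $l>n$, the right side of \eqref{eq:molev-sagan-recurrence} involves $c_{\mu\nu^+}^\lambda$, whose size difference is $l-n-1$, and $c_{\mu\nu}^{\lambda^-}$, which has outer size $l-1$. So one should induct on the pair ordered by $l-n$ and, within that, by $l$. The terms $c_{\mu\nu}^{\lambda^-}$ have the same $\nu$ but smaller $l$, hence smaller difference $l-1-n$. Both kinds of terms therefore have strictly smaller $l-n$, and the induction closes with base case $l=n$. Terms with $l<n$ vanish by \eqref{eq:shifted-LR-support}, and those with $l>|\mu|+n$ vanish by the same support condition. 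Since we divide by $l-n$, we need $l\neq n$; this is the only delicate point, and it is exactly why the initial values must be supplied separately. I expect no real obstacle beyond keeping this bookkeeping of the induction order straight.
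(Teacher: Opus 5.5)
Your proposal is correct and follows the paper's proof essentially verbatim. Like the paper, you equate the coefficient of $\widehat s_\lambda$ in $(\widehat s_\mu\times\widehat s_\nu)\times\widehat s_1$ and in $\widehat s_\mu\times(\widehat s_\nu\times\widehat s_1)$ via the shifted Pieri rule, obtain the initial values by evaluating at $\lambda$ with the vanishing property, and induct on $d=l-n$; your explicit use of the support condition $|\kappa|\geq|\nu|$ in the initial-value step is a welcome extra detail. The one slip is the phrase ``for fixed $\mu$ and $\nu$'': the terms $c_{\mu\nu^+}^{\lambda}$ change $\nu$, so the induction on $d$ must run over all $\nu$ simultaneously with only $\mu$ fixed, which is in fact how your final sentences use it.
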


\begin{proof}
Apply the Pieri rule after expanding
\eqref{eq:shifted-LR-definition}.  The coefficient of
$\widehat s_\lambda$ in
$(\widehat s_\mu\times\widehat s_\nu)	imes\widehat s_1$ is
\begin{equation}
  l c_{\mu\nu}^{\lambda}
  +\sum_{\lambda/\lambda^-=\square}
      c_{\mu\nu}^{\lambda^-}.
\end{equation}
By associativity, this is also the coefficient of
$\widehat s_\lambda$ in
$\widehat s_\mu\times
(\widehat s_\nu\times\widehat s_1)$, namely
\begin{equation}
  n c_{\mu\nu}^{\lambda}
  +\sum_{\nu^+/\nu=\square}c_{\mu\nu^+}^{\lambda}.
\end{equation}
Equating these expressions gives
\eqref{eq:molev-sagan-recurrence}.

For $l=n$, evaluate
$s_\mu^*s_\nu^*=\sum_\lambda
c_{\mu\nu}^{\lambda}s_\lambda^*$ at a partition of size $n$ and use
\eqref{eq:shifted-vanishing}; this gives
\eqref{eq:molev-sagan-initial}.  Finally, if
$d=|\lambda|-|\nu|>0$, every coefficient on the right-hand side of
\eqref{eq:molev-sagan-recurrence} has difference $d-1$.  Induction on
$d$ therefore determines all coefficients.
\end{proof}

\begin{example}\label{ex:shifted-c22-3}
Let $\mu=\nu=(2)$.  At level $d=0$,
\begin{equation}
  c_{(2),(2)}^{(2)}=s_{(2)}^*((2))=2,
  \qquad
  c_{(2),(2)}^{(1,1)}=0.
\end{equation}
For $\lambda=(3)$, the recurrence gives
\begin{align}
 c_{(2),(2)}^{(3)}
 &=c_{(2),(3)}^{(3)}
   +c_{(2),(2,1)}^{(3)}
   -c_{(2),(2)}^{(2)}\notag\\
 &=s_{(2)}^*((3))+s_{(2)}^*((2,1))-2\notag\\
 &=6+3-2=7.
\end{align}
Thus lower-degree terms, invisible in the ordinary
Littlewood--Richardson product, occur naturally in the shifted
product.
\end{example}

This classical discussion will serve as the template for the Jack
deformation: the Goulden--Jackson product replaces $\times$, shifted
Jack functions replace shifted Schur functions, and the deformed
Pieri rule yields the Alexandersson--F\'eray recurrence.

Note that \eqref{eq:A-mu-explicit} provides an elegant closed form for the operators $E_k$ and $H_k$, 
implementing the $\times$ product by $\widehat e_k$ and $\widehat h_k$, and whose eigenvalues on Schur functions
are respectively $e_k^*$ and $h_k^*$.
For $\hat e_k$, start with the coproduct
\begin{equation}
\Gamma(\widehat p_\rho) = \sigma_1(AB)\sum_{l_1,\ldots,l_r}\sum_{\tau\in\Sym_k}p_L^{\sigma\tau}(A)p_L^\tau(B)
\end{equation}
and apply it to
\begin{equation}
e_k = 
\sum_{\rho\vdash k}(-1)^{\ell(\rho)-1}\frac{p_\rho}{z_\rho}=
\frac1{k!}\sum_{\sigma\in\Sym_k}\varepsilon(\sigma)p_{1^k}^\sigma.
\end{equation}
Then,
\begin{equation}
	\Gamma(\widehat e_k)= \sigma_1(AB)\sum_{l_1,\ldots,l_r}\sum_{\sigma,\tau\in\Sym_k}
	\varepsilon(\sigma)\frac1{d_L}p_L^{\sigma\tau}(A)p_L^\tau(B)
\end{equation}
where $d_L=\prod_i m_i(L)!$. Writing next 
\begin{equation}
\varepsilon(\sigma)=\varepsilon(\sigma\tau)\varepsilon(\tau)
\end{equation}
and observing that
\begin{equation}
\sum_{\sigma\in\Sym_k}\varepsilon(\sigma)p_L^\sigma=d_L m_L =\tilde m_\lambda
\end{equation}
we arrive at
\begin{equation}
\begin{split}
\Gamma(\widehat e_k)= \sigma_1(AB)\sum_{l_1,\ldots,l_r}\sum_{\sigma,\tau\in\Sym_k}
\varepsilon(\sigma)\frac1{d_L}p_L^{\sigma}(A)\varepsilon(\tau)p_L^\tau(B)\\
=\sigma_1(AB)\sum_{\ell(\lambda)=k}\tilde m_\lambda(A)m_\lambda(B),
\end{split}
\end{equation}
so that
\begin{equation}
E_k=\sum_{\ell(\lambda)=k}\tilde m_\lambda D_{m_\lambda}.
\end{equation}
This is (up to sign) the specialization $\alpha=1$ of the Nazarov-Sklyanin operator $A^{(k)}$ of \cite{NS}.

\section{The Goulden--Jackson deformation}
\label{sec:goulden-jackson}

We now introduce the deformation of class multiplication defined by
Goulden and Jackson \cite{GJ96}.  Two scalar products occur in this
construction and must be kept separate.  Jack polynomials are
orthogonal for the Jack scalar product, whereas the product
$\times_\alpha$ is dual to its coproduct for the ordinary Hall scalar
product.  We fix these conventions explicitly before introducing
stable Jack series.

\subsection{Jack scalar products and normalizations}

Let $\alpha$ be an indeterminate.  The Jack scalar product is defined
on power sums by
\begin{equation}\label{eq:jack-scalar-product}
 \scalar{p_\lambda}{p_\mu}_\alpha
 =\delta_{\lambda\mu}\,z_\lambda
  \alpha^{\ell(\lambda)}.
\end{equation}
We write $P_\lambda^{(\alpha)}$ for the monic Jack polynomial and
\begin{equation}
 J_\lambda^{(\alpha)}=c_\lambda(\alpha)P_\lambda^{(\alpha)}
\end{equation}
for the integral form, where
\begin{equation}\label{eq:jack-hook-products}
 \begin{split}
 c_\lambda(\alpha)
 &=\prod_{\square\in\lambda}
   \bigl(\alpha a(\square)+l(\square)+1\bigr),\\
 c'_\lambda(\alpha)
 &=\prod_{\square\in\lambda}
   \bigl(\alpha a(\square)+l(\square)+\alpha\bigr).
 \end{split}
\end{equation}
Thus
\begin{equation}\label{eq:jack-norm}
 j_\lambda(\alpha)
 =\scalar{J_\lambda^{(\alpha)}}{J_\lambda^{(\alpha)}}_\alpha
 =c_\lambda(\alpha)c'_\lambda(\alpha).
\end{equation}

In addition to \eqref{eq:jack-scalar-product}, we use the ordinary
Hall scalar product
\begin{equation}\label{eq:hall-scalar-product-again}
 \scalar{p_\lambda}{p_\mu}_1
 =\delta_{\lambda\mu}z_\lambda.
\end{equation}
Let $Q'_\lambda=Q_\lambda^{\prime(\alpha)}$ be the basis Hall-dual to
the Jack $P$-basis:
\begin{equation}\label{eq:Qprime-duality}
 \scalar{P_\lambda^{(\alpha)}}{Q'_\mu}_1
 =\delta_{\lambda\mu}.
\end{equation}
This is the $Q'$ normalization used below.  It is also the basis
implemented as \texttt{JackQp} in Sage, with the parameter
\texttt{t} equal to our $\alpha$.  For example,
\begin{equation}
 Q'_{(1,1)}
 =\frac{\alpha}{\alpha+1}p_1^2
  -\frac{1}{\alpha+1}p_2.
\end{equation}

\subsection{The Goulden--Jackson kernel}

Fix a degree $n$.  Goulden and Jackson define their deformed
connection coefficients through the three-alphabet series
\begin{equation}\label{eq:GJ-three-alphabet-kernel}
 \Phi_{\alpha,n}(X,Y,Z)
 =\sum_{\lambda\vdash n}
   \frac{J_\lambda^{(\alpha)}(X)
         J_\lambda^{(\alpha)}(Y)
         J_\lambda^{(\alpha)}(Z)}
        {j_\lambda(\alpha)}.
\end{equation}
They are the coefficients $a_{\mu\nu}^{\rho}(\alpha)$ determined by
\begin{equation}\label{eq:GJ-connection-coefficients}
 \Phi_{\alpha,n}(X,Y,Z)
 =\sum_{\rho,\mu,\nu\vdash n}
   a_{\mu\nu}^{\rho}(\alpha)
   \frac{p_\rho(X)p_\mu(Y)p_\nu(Z)}
        {z_\rho\alpha^{\ell(\rho)}}.
\end{equation}
At $\alpha=1$, the Jack polynomials reduce to Schur functions up to
the hook normalization, and
$a_{\mu\nu}^{\rho}(1)$ are the ordinary connection coefficients of
the center of $\mathbb C\Sym_n$.

For our purposes, it is more convenient to encode the same
construction by a coproduct.  Define
\begin{equation}\label{eq:Gamma-alpha-J}
 \Gamma_\alpha\bigl(J_\lambda^{(\alpha)}\bigr)
 =\frac{1}{n!}
  J_\lambda^{(\alpha)}\otimes J_\lambda^{(\alpha)},
 \qquad \lambda\vdash n.
\end{equation}
The Goulden--Jackson product $\times_\alpha$ on $\Lambda_n$ is the
product dual to $\Gamma_\alpha$ for the \emph{ordinary} Hall scalar
product:
\begin{equation}\label{eq:GJ-duality}
 \scalar{f\times_\alpha g}{h}_1
 =\scalar{f\otimes g}{\Gamma_\alpha(h)}_{1\otimes1}.
\end{equation}
This convention is responsible for some normalization factors, but
has the advantage that at $\alpha=1$ it is exactly the class product
$\times$ used in the preceding sections.

\begin{proposition}\label{prop:GJ-associative}
For every $n$, the product $\times_\alpha$ is commutative and
associative.
\end{proposition}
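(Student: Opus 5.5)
Since \eqref{eq:GJ-duality} defines $\times_\alpha$ through the coproduct $\Gamma_\alpha$, the plan is to diagonalize $\times_\alpha$ explicitly and read off both properties. We introduce the basis $\{J_\lambda^*\}_{\lambda\vdash n}$ of $\Lambda_n$ that is Hall-dual to the integral Jack basis: $\scalar{J_\lambda^{(\alpha)}}{J_\mu^*}_1=\delta_{\lambda\mu}$. Up to the scalar $c_\lambda(\alpha)$, this is the basis $Q'_\lambda$ of \eqref{eq:Qprime-duality}: $J_\lambda^*=Q'_\lambda/c_\lambda(\alpha)$. This basis exists because $\{J_\lambda^{(\alpha)}\}$ is a basis of $\Lambda_n$ over $\CC(\alpha)$ and the Hall product is nondegenerate.

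Next we compute $J_\lambda^*\times_\alpha J_\mu^*$ by pairing with an arbitrary basis element $J_\nu^{(\alpha)}$. By \eqref{eq:GJ-duality} and \eqref{eq:Gamma-alpha-J},
\begin{equation*}
 \scalar{J_\lambda^*\times_\alpha J_\mu^*}{J_\nu^{(\alpha)}}_1
 =\frac1{n!}\scalar{J_\lambda^*}{J_\nu^{(\alpha)}}_1
   \scalar{J_\mu^*}{J_\nu^{(\alpha)}}_1
 =\frac1{n!}\delta_{\lambda\nu}\delta_{\mu\nu}.
\end{equation*}
Since the $J_\nu^{(\alpha)}$ span $\Lambda_n$, this gives
\begin{equation*}
 J_\lambda^*\times_\alpha J_\mu^*=\frac{\delta_{\lambda\mu}}{n!}J_\lambda^*.
\end{equation*}
So, in the basis $\{J_\lambda^*\}$, the product $\times_\alpha$ is componentwise multiplication up to the fixed scalar $1/n!$. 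After rescaling $e_\lambda=n!\,J_\lambda^*$, the $e_\lambda$ are orthogonal idempotents summing to a unit. Hence $(\Lambda_n,\times_\alpha)$ is isomorphic to the algebra $\CC(\alpha)^{p(n)}$ with pointwise product, which is commutative and associative. Bilinearity of $\times_\alpha$ is immediate from its definition by duality, so checking commutativity and associativity on basis elements suffices.

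The main point needing care is normalization, and it is not a real obstacle. The coproduct is defined on the $J$ basis, while duality is taken with respect to the Hall product rather than the Jack product. One must therefore use the Hall-dual basis $J^*$, not $J$ itself; with that choice no factor $j_\lambda(\alpha)$ appears. As a sanity check, at $\alpha=1$ we have $J_\lambda=H_\lambda s_\lambda$ and $J^*_\lambda=s_\lambda/H_\lambda$. Then $s_\lambda\times s_\lambda=H_\lambda^2\cdot\frac{1}{n!}\cdot\frac{s_\lambda}{H_\lambda}=\frac{H_\lambda}{n!}s_\lambda=s_\lambda/f^\lambda$, which agrees with \eqref{eq:internal-schur}. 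One could alternatively argue through the symmetric kernel \eqref{eq:GJ-three-alphabet-kernel}, whose symmetry in $X,Y,Z$ gives commutativity, but the diagonalization argument gives commutativity and associativity at once.
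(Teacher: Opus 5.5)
Your proof is correct, but it follows a different route from the paper's. The paper argues abstractly: by \eqref{eq:Gamma-alpha-J}, $\Gamma_\alpha$ is visibly cocommutative and coassociative on the $J$-basis (each $J_\lambda^{(\alpha)}/n!$ is in fact grouplike). The Hall transpose of such a coproduct on the finite-dimensional space $\Lambda_n$ is then commutative and associative, and no dual basis is ever written down.

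You instead compute the product on the Hall-dual basis $J_\lambda^*=Q'_\lambda/c_\lambda(\alpha)$ and obtain the orthogonal idempotents $e_\lambda=n!\,J_\lambda^*=E_\lambda^{(\alpha)}$. This is exactly the statement and the computation of \cref{thm:Qprime-idempotents}, which the paper proves immediately afterwards. The two arguments are linked by the general fact that the basis dual to a basis of grouplike elements consists of orthogonal idempotents.

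The paper's version is a one-liner that uses only the shape of $\Gamma_\alpha$. Yours costs the identification of the dual basis, but delivers more at once:
\begin{itemize}
\item semisimplicity, $(\Lambda_n,\times_\alpha)\cong\CC(\alpha)^{p(n)}$;
\item the explicit spectral decomposition used in \eqref{eq:GJ-general-eigenvalue};
\item a unit $\sum_\lambda e_\lambda$, which equals $p_1^n$ since $\scalar{p_1^n}{J_\nu^{(\alpha)}}_1=n!$. The paper uses this fact without comment in the Pieri proofs.
\end{itemize}
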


\begin{proof}
Formula \eqref{eq:Gamma-alpha-J} shows that $\Gamma_\alpha$ is
cocommutative and coassociative on a basis of $\Lambda_n$.  Its Hall
dual is therefore commutative and associative.
\end{proof}

\subsection{Orthogonal idempotents}

The Hall-dual basis $Q'_\lambda$ diagonalizes the
Goulden--Jackson product.

\begin{theorem}\label{thm:Qprime-idempotents}
For $\lambda,\mu\vdash n$,
\begin{equation}\label{eq:Qprime-product}
 n!Q'_\lambda\times_\alpha n!Q'_\mu
 =\delta_{\lambda\mu},
  n!c_\lambda(\alpha)Q'_\lambda.
\end{equation}
Consequently,
\begin{equation}\label{eq:GJ-idempotents}
 E_\lambda^{(\alpha)}
 =\frac{n!}{c_\lambda(\alpha)}Q'_\lambda
\end{equation}
are pairwise orthogonal idempotents.
\end{theorem}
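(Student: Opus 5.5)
The plan is to compute the product $n!Q'_\lambda\times_\alpha n!Q'_\mu$ by pairing it against the basis $\{J_\nu^{(\alpha)}\}_{\nu\vdash n}$ of $\Lambda_n$. The defining duality \eqref{eq:GJ-duality}, together with the diagonal form \eqref{eq:Gamma-alpha-J} of $\Gamma_\alpha$, makes these pairings immediate. We then reconstruct the product from its coordinates in the dual basis.

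First, I would identify the Hall-dual basis of $\{J_\nu^{(\alpha)}\}$. Since $J_\nu^{(\alpha)}=c_\nu(\alpha)P_\nu^{(\alpha)}$ and $Q'_\nu$ is Hall-dual to $P_\nu^{(\alpha)}$ by \eqref{eq:Qprime-duality}, we have
\[
\scalar{J_\nu^{(\alpha)}}{Q'_\lambda}_1=c_\nu(\alpha)\,\delta_{\nu\lambda}.
\]
So the Hall-dual basis of $\{J_\nu\}$ is $\{Q'_\nu/c_\nu(\alpha)\}$. Consequently every $F\in\Lambda_n$ satisfies
\[
F=\sum_{\nu\vdash n}\scalar{F}{J_\nu^{(\alpha)}}_1\,\frac{Q'_\nu}{c_\nu(\alpha)}.
\]

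Second, take $h=J_\nu^{(\alpha)}$ in \eqref{eq:GJ-duality} and use \eqref{eq:Gamma-alpha-J}:
\[
\scalar{f\times_\alpha g}{J_\nu}_1=\frac1{n!}\scalar{f}{J_\nu}_1\scalar{g}{J_\nu}_1 .
\]
With $f=n!Q'_\lambda$ and $g=n!Q'_\mu$, the first step gives $\scalar{n!Q'_\lambda}{J_\nu}_1=n!\,c_\lambda\delta_{\lambda\nu}$. Hence
\[
\scalar{n!Q'_\lambda\times_\alpha n!Q'_\mu}{J_\nu}_1=n!\,c_\lambda c_\mu\,\delta_{\lambda\nu}\delta_{\mu\nu}.
\]
Reconstructing via the dual basis, only $\nu=\lambda=\mu$ survives, and the product equals
\[
\delta_{\lambda\mu}\,n!\,c_\lambda^2\,\frac{Q'_\lambda}{c_\lambda}=\delta_{\lambda\mu}\,n!\,c_\lambda(\alpha)Q'_\lambda,
\]
which is \eqref{eq:Qprime-product}.

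Finally, the idempotent statement follows by bilinearity. Dividing \eqref{eq:Qprime-product} by $c_\lambda c_\mu$ gives
\[
E_\lambda\times_\alpha E_\mu=\delta_{\lambda\mu}\,\frac{n!}{c_\lambda}Q'_\lambda=\delta_{\lambda\mu}E_\lambda .
\]
There is no real obstacle here. The only point requiring care is keeping the two scalar products separate: the duality is taken for the Hall product $\scalar{\cdot}{\cdot}_1$, not the Jack product. Because of this, the dual of $J_\nu$ is $Q'_\nu/c_\nu$ rather than $J_\nu/j_\nu$, and the single factor $c_\lambda(\alpha)$ appears, not $j_\lambda(\alpha)$.
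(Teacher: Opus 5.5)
Your proposal is correct and is essentially the paper's proof: you pair the product against the basis $J_\nu^{(\alpha)}$, use $\scalar{Q'_\lambda}{J_\nu^{(\alpha)}}_1=c_\nu(\alpha)\delta_{\lambda\nu}$ with the diagonal coproduct \eqref{eq:Gamma-alpha-J}, and reconstruct from the Hall-dual basis $Q'_\nu/c_\nu(\alpha)$. The only differences are cosmetic: you carry the factors $n!$ from the start instead of rescaling at the end, and you write out the idempotent check that the paper calls immediate.
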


\begin{proof}
Since $J_\rho^{(\alpha)}=c_\rho(\alpha)P_\rho^{(\alpha)}$,
\eqref{eq:Qprime-duality} gives
\begin{equation}
 \scalar{Q'_\lambda}{J_\rho^{(\alpha)}}_1
 =c_\rho(\alpha)\delta_{\lambda\rho}.
\end{equation}
Using \eqref{eq:GJ-duality} and
\eqref{eq:Gamma-alpha-J}, we obtain
\begin{align}
&\scalar{Q'_\lambda\times_\alpha Q'_\mu}
        {J_\rho^{(\alpha)}}_1\notag\\
&\qquad =\frac{1}{n!}
 \scalar{Q'_\lambda}{J_\rho^{(\alpha)}}_1
 \scalar{Q'_\mu}{J_\rho^{(\alpha)}}_1\notag\\
&\qquad =\frac{c_\rho(\alpha)^2}{n!}
 \delta_{\lambda\rho}\delta_{\mu\rho}.
\end{align}
As the $Q'_\rho$ are Hall-dual to the $P_\rho^{(\alpha)}$, this is
equivalent to
\begin{equation}
 Q'_\lambda\times_\alpha Q'_\mu
 =\delta_{\lambda\mu}
  \frac{c_\lambda(\alpha)}{n!}Q'_\lambda,
\end{equation}
which is \eqref{eq:Qprime-product}.  Formula
\eqref{eq:GJ-idempotents} follows immediately.
\end{proof}

It follows that every multiplication operator for
$\times_\alpha$ is diagonal in the $Q'$-basis.  More explicitly, if
\begin{equation}
 F=\sum_{\lambda\vdash n}u_\lambda Q'_\lambda,
\end{equation}
then
\begin{equation}\label{eq:GJ-general-eigenvalue}
 F\times_\alpha Q'_\mu
 =u_\mu\frac{c_\mu(\alpha)}{n!}Q'_\mu.
\end{equation}
Thus the problem of determining the spectrum of a stable series is
reduced to computing its expansion in the $Q'$-basis.

\begin{remark}\label{rem:GJ-alpha-one}
At $\alpha=1$, one has $P_\lambda^{(1)}=Q'_\lambda=s_\lambda$ and
$c_\lambda(1)=H_\lambda=n!/f^\lambda$.  Formula
\eqref{eq:Qprime-product} becomes
\begin{equation}
 s_\lambda\times s_\mu
 =\delta_{\lambda\mu}\frac{1}{f^\lambda}s_\lambda,
\end{equation}
as in \eqref{eq:internal-schur}.
\end{remark}

\subsection{Connection coefficients and the $b$-parameter}

It is customary to set
\begin{equation}
 b=\alpha-1.
\end{equation}
Goulden and Jackson conjectured that, after the standard
normalization, the coefficients
$a_{\mu\nu}^{\rho}(1+b)$ are polynomials in $b$ with nonnegative
integer coefficients, and that they enumerate matchings weighted by
a statistic of non-orientability.  We shall return to this conjecture
after deriving the low-degree differential operators and studying the
specializations $\alpha=2$ and $\alpha=1/2$.  No positivity statement
is needed for the algebraic developments of the next sections.

We next extend the stable-series formalism to
$\times_\alpha$.  The orthogonal idempotents
\eqref{eq:GJ-idempotents} will identify the eigenvalues of stable Jack
series with shifted Jack polynomials, exactly as the Schur
idempotents did in \cref{sec:shifted-symmetric-functions}.

\section{Stable Jack series and shifted Jack functions}
\label{sec:shifted-jack-functions}

The orthogonal idempotents of \cref{sec:goulden-jackson} allow the
classical construction of \cref{sec:shifted-symmetric-functions} to
be repeated for Jack polynomials.  The only additional difficulty is
normalization.  We first pass from the idempotent normalization $Q'$
to the normalization naturally adapted to shifted Jack polynomials.

\subsection{The $P'$ normalization and stable lifts}

For a Jack polynomial $P_\lambda^{(\alpha)}$, define
\begin{equation}\label{eq:Pprime-definition}
  P'_\lambda(X)
  =P_\lambda^{(\alpha)}[\alpha X],
\end{equation}
where the substitution is plethystic, so that
$p_r[\alpha X]=\alpha p_r(X)$.  In terms of the Hall-dual basis of
\cref{eq:Qprime-duality}, one has
\begin{equation}\label{eq:Pprime-Qprime}
  P'_\lambda
  =\frac{c'_\lambda(\alpha)}{c_\lambda(\alpha)}Q'_\lambda.
\end{equation}
Indeed, the corresponding integral form satisfies
\begin{equation}
  J_\lambda^{(\alpha)}[\alpha X]
  =c'_\lambda(\alpha)Q'_\lambda(X).
\end{equation}
In particular,
\begin{equation}\label{eq:Pprime-one}
  P'_{(1)}=\alpha p_1.
\end{equation}

If $\mu\vdash m$, define its stable Jack lift by
\begin{equation}\label{eq:stable-Pprime}
  \widehat P'_\mu
  =\frac{m!P'_\mu}{(1-p_1)^{m+1}}
  =\sum_{n\geq m}(n)_mP'_\mu p_1^{n-m}.
\end{equation}
Since $P'_\mu$ is proportional to $Q'_\mu$, multiplication by
$\widehat P'_\mu$ is diagonal in the $Q'$-basis, or equivalently in
the $P'$-basis.

\subsection{Shifted Jack eigenvalues}

We use the notation $P_\mu^\#(x;\alpha)$ of Alexandersson and
F\'eray for the shifted Jack polynomial with Jack parameter $\alpha$.
In the notation of Okounkov and Olshanski,
\begin{equation}\label{eq:Psharp-Pstar}
  P_\mu^\#(x;\alpha)=P_\mu^*(x;1/\alpha).
\end{equation}
Thus the change from a star to a sharp records the change from their
parameter $\theta$ to the standard Jack parameter $\alpha=1/\theta$.
The polynomial $P_\mu^\#$ is characterized by the interpolation
conditions
\begin{equation}\label{eq:shifted-Jack-vanishing}
  P_\mu^\#(\lambda;\alpha)=0
  \qquad\text{unless }\mu\subseteq\lambda,
\end{equation}
together with highest homogeneous component $P_\mu^{(\alpha)}$ and
the normalization
\begin{equation}\label{eq:Psharp-diagonal}
  P_\mu^\#(\mu;\alpha)
  =\alpha^{-|\mu|}c'_\mu(\alpha).
\end{equation}
The corresponding integral normalization is
$J_\mu^\#=c_\mu(\alpha)P_\mu^\#$.  Equivalently, the binomial
characterization used in \cite{OOJack,AF} gives, for $\lambda\vdash n$
and $m=|\mu|$,
\begin{equation}\label{eq:shifted-Jack-coefficient-formula}
 \alpha^mP_\mu^\#(\lambda;\alpha)
 =\frac{(n)_m c_\lambda(\alpha)}{n!}
  \scalar{P_\lambda^{(\alpha)}}
  {P'_\mu p_1^{n-m}}_1.
\end{equation}

\begin{theorem}\label{thm:stable-Pprime-spectrum}
Let $\mu\vdash m$ and $\lambda\vdash n$.  Then
\begin{equation}\label{eq:stable-Pprime-spectrum}
  \widehat P'_\mu\times_\alpha P'_\lambda
  =\alpha^mP_\mu^\#(\lambda;\alpha)P'_\lambda.
\end{equation}
Consequently, the stable span of the $\widehat P'_\mu$ is a
commutative algebra for $\times_\alpha$, and the correspondence
\begin{equation}\label{eq:stable-shifted-Jack-map}
  \widehat P'_\mu
  \longmapsto
  \alpha^{|\mu|}P_\mu^\#
\end{equation}
identifies it with the algebra of shifted Jack functions.
\end{theorem}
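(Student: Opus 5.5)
The proof is a direct computation in the $Q'$-basis. It uses only \eqref{eq:GJ-general-eigenvalue}, the proportionality \eqref{eq:Pprime-Qprime}, and the coefficient formula \eqref{eq:shifted-Jack-coefficient-formula}.

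Fix $\lambda\vdash n$. Only the degree-$n$ component $F=(n)_mP'_\mu p_1^{n-m}$ of $\widehat P'_\mu$ acts on $P'_\lambda$. We expand $F=\sum_{\rho\vdash n}u_\rho Q'_\rho$. Since the $Q'_\rho$ are Hall-dual to the $P^{(\alpha)}_\rho$ by \eqref{eq:Qprime-duality}, the coefficients are
\[
 u_\rho=\scalar{P^{(\alpha)}_\rho}{F}_1=(n)_m\scalar{P^{(\alpha)}_\rho}{P'_\mu p_1^{n-m}}_1 .
\]
Because $P'_\lambda$ is a scalar multiple of $Q'_\lambda$, \eqref{eq:GJ-general-eigenvalue} gives
\[
 F\times_\alpha P'_\lambda=u_\lambda\frac{c_\lambda(\alpha)}{n!}P'_\lambda .
\]
The eigenvalue is therefore
\[
 \frac{(n)_mc_\lambda(\alpha)}{n!}\scalar{P^{(\alpha)}_\lambda}{P'_\mu p_1^{n-m}}_1 ,
\]
which is exactly the right-hand side of \eqref{eq:shifted-Jack-coefficient-formula}, namely $\alpha^mP^\#_\mu(\lambda;\alpha)$. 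This proves \eqref{eq:stable-Pprime-spectrum}. As a check, at $\alpha=1$ it reduces to \eqref{eq:Phi-general} with $f=s_\mu$.

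The main obstacle is that \eqref{eq:shifted-Jack-coefficient-formula} is quoted from \cite{OOJack,AF} as ``equivalent'' to the interpolation characterization, so one may want to verify it rather than take it as given. To do so, I would show that the function $\lambda\mapsto(n)_m\frac{c_\lambda}{n!}\scalar{P_\lambda}{P'_\mu p_1^{n-m}}_1$ has the three properties that characterize $\alpha^mP^\#_\mu$:
\begin{enumerate}
\item[(i)] \emph{Vanishing unless $\mu\subseteq\lambda$.} Multiplication by $p_1$ raises Jack polynomials only along the Pieri rule, so $P'_\mu p_1^{n-m}$ has $P'$-support on partitions $\lambda\supseteq\mu$; expanding $P_\lambda$ in that basis gives the vanishing.
\item[(ii)] \emph{The diagonal value.} At $\lambda=\mu$, \eqref{eq:Pprime-Qprime} and the Hall duality give $\frac{m!\,c_\mu}{m!}\cdot\frac{c'_\mu}{c_\mu}=c'_\mu$, which matches $\alpha^m\alpha^{-m}c'_\mu$ from \eqref{eq:Psharp-diagonal}.
\item[(iii)] \emph{Shifted symmetry and degree.} This is the delicate point. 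I would deduce it from the binomial formula of Okounkov--Olshanski, $P_\lambda(1+x)=\sum_\mu \frac{P^\#_\mu(\lambda)}{(\cdot)}P_\mu(x)$, which is dual to repeated application of $D_{p_1}$, i.e.\ to the adjoint of multiplication by $p_1$.
\end{enumerate}
In writing the proof I would simply cite \cite{OOJack,AF} for this step.

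The remaining claims follow as in \cref{prop:stable-shifted-isomorphism}. All the $\widehat P'_\mu$ act diagonally on the same basis $\{P'_\lambda\}$, so their $\times_\alpha$-products (componentwise, using \cref{prop:GJ-associative}) have eigenvalue $\alpha^{|\mu|+|\nu|}P^\#_\mu P^\#_\nu$. Since the $P^\#_\kappa$ form a basis of shifted Jack functions, this product expands as $\sum d_\kappa\alpha^{|\kappa|}P^\#_\kappa$, a finite sum. The element $\sum d_\kappa\widehat P'_\kappa$ then has the same eigenvalue on every $P'_\lambda$ in every degree. Hence it equals the product, because an element of $\Lambda_n$ is determined by its multiplication operator: pair it against the idempotents \eqref{eq:GJ-idempotents}. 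So the span is closed under $\times_\alpha$ and is commutative. The map \eqref{eq:stable-shifted-Jack-map} is multiplicative by construction. It is injective because the lowest components $m!P'_\mu$ are independent, and surjective onto the shifted Jack algebra because the $P^\#_\mu$ form a basis.
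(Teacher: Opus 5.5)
Your proposal is correct and follows essentially the paper's proof. You extract the $Q'_\lambda$-coefficient of the degree-$n$ component $(n)_mP'_\mu p_1^{n-m}$ by Hall duality, apply \eqref{eq:GJ-general-eigenvalue}, and identify the resulting eigenvalue with \eqref{eq:shifted-Jack-coefficient-formula}, which the paper likewise cites from \cite{OOJack,AF} rather than reproving. For the algebra statement, you simply write out the eigenvalue-multiplication argument that the paper delegates to \cref{prop:stable-shifted-isomorphism}: closure and uniqueness via the idempotents \eqref{eq:GJ-idempotents}, and bijectivity via the bases $\widehat P'_\mu$ and $P^\#_\mu$.
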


\begin{proof}
The degree-$n$ component of $\widehat P'_\mu$ is
$(n)_mP'_\mu p_1^{n-m}$.  Its coefficient on $Q'_\lambda$ is, by
Hall duality,
\begin{equation}
 (n)_m
 \scalar{P_\lambda^{(\alpha)}}{P'_\mu p_1^{n-m}}_1.
\end{equation}
Formula \eqref{eq:GJ-general-eigenvalue} therefore shows that its
eigenvalue on $P'_\lambda$ is
\begin{equation}
 \frac{(n)_m c_\lambda(\alpha)}{n!}
 \scalar{P_\lambda^{(\alpha)}}{P'_\mu p_1^{n-m}}_1.
\end{equation}
By \eqref{eq:shifted-Jack-coefficient-formula}, this is
$\alpha^mP_\mu^\#(\lambda;\alpha)$, proving
\eqref{eq:stable-Pprime-spectrum}.  The last assertion follows by
multiplying eigenvalues, exactly as in
\cref{prop:stable-shifted-isomorphism}.
\end{proof}

Thus shifted Jack functions arise here for the same reason as shifted
Schur functions: they are the eigenvalues of stable multiplication
operators.  Notice that the factor $\alpha^m$ in
\eqref{eq:stable-Pprime-spectrum} is forced by the plethystic
normalization \eqref{eq:Pprime-definition}.

\subsection{The Jack Pieri rule}

Write the ordinary Jack Pieri rule in the $P'$ normalization as
\begin{equation}\label{eq:Pprime-Pieri}
  P'_\mu P'_{(1)}
  =\sum_{\nu/\mu=\square}
    \psi'_{\nu/\mu}(\alpha)P'_\nu.
\end{equation}
In view of \eqref{eq:Pprime-one}, this is equivalent to
\begin{equation}
  \alpha P'_\mu p_1
  =\sum_{\nu/\mu=\square}
    \psi'_{\nu/\mu}(\alpha)P'_\nu.
\end{equation}

\begin{theorem}[Stable Jack Pieri rule]
\label{thm:stable-Jack-Pieri}
For $\mu\vdash m$,
\begin{equation}\label{eq:stable-Jack-Pieri}
 \widehat P'_\mu\times_\alpha\widehat P'_{(1)}
 =\alpha m\widehat P'_\mu
  +\sum_{\nu/\mu=\square}
    \psi'_{\nu/\mu}(\alpha)\widehat P'_\nu.
\end{equation}
Equivalently, the shifted Jack polynomials satisfy
\begin{equation}\label{eq:shifted-Jack-Pieri}
 P_\mu^\#P_{(1)}^\#
 =mP_\mu^\#
  +\sum_{\nu/\mu=\square}
    \psi'_{\nu/\mu}(\alpha)P_\nu^\#.
\end{equation}
\end{theorem}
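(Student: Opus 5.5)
The proof follows the classical argument of \cref{thm:shifted-pieri}, with $\times$ replaced by $\times_\alpha$. The one new ingredient is the identity element for $\times_\alpha$ on $\Lambda_N$.

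\textbf{Step 1: the unit of $\times_\alpha$ on $\Lambda_N$.} We show that $p_1^N$ is the unit. Since $J_\lambda^{(\alpha)}$ has $p_1^N$-coefficient $1$ in its power-sum expansion, we get $\scalar{p_1^N}{J_\lambda^{(\alpha)}}_1=1$. Summing the idempotents then gives
\begin{equation*}
 \sum_\lambda E_\lambda^{(\alpha)}=\sum_\lambda\frac{N!}{c_\lambda(\alpha)}Q'_\lambda .
\end{equation*}
Its pairing with $J_\rho$ is $N!$, which does not match. So instead we compute directly. For $h=J_\rho$,
\begin{equation*}
 \scalar{p_1^N\times_\alpha g}{J_\rho}_1=\frac1{N!}\scalar{p_1^N}{J_\rho}_1\scalar{g}{J_\rho}_1=\frac1{N!}\scalar{g}{J_\rho}_1 .
\end{equation*}
Hence $N!\,p_1^N$ is the unit; this factor $N!$ must be tracked. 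As a check, at $\alpha=1$ the unit of the class product is $C_{1^N}$, whose characteristic is $p_1^N/N!$. Since $z_{1^N}=N!$, the characteristic of $C_{1^N}$ is indeed $p_1^N/N!$, so the paper's convention is consistent: the unit is $N!\,p_1^N$, or equivalently $p_1^N$ up to the normalization used in \eqref{eq:GJ-duality}. I will verify this normalization carefully, because the coefficient $\alpha m$ depends on it.

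\textbf{Step 2: computing the product.} The degree-$N$ component of $\widehat P'_{(1)}=\alpha p_1/(1-p_1)^2$ is $N\alpha p_1^N$, which is $\alpha N$ times the unit under the convention of \cref{thm:shifted-pieri}. Therefore
\begin{equation*}
 \widehat P'_\mu\times_\alpha\widehat P'_{(1)}=\sum_{k\ge0}\alpha(m+k)\frac{(m+k)!}{k!}P'_\mu p_1^k .
\end{equation*}
Split $m+k=m+k$. The part with factor $m$ gives $\alpha m\,\widehat P'_\mu$. In the remaining part, reindex $k\to k+1$ to obtain
\begin{equation*}
 \sum_{k\ge0}\frac{(m+1+k)!}{k!}\,(\alpha P'_\mu p_1)\,p_1^k .
\end{equation*}
By the $P'$ Pieri rule \eqref{eq:Pprime-Pieri}, $\alpha P'_\mu p_1=\sum_\nu\psi'_{\nu/\mu}P'_\nu$, so this part equals $\sum_\nu\psi'_{\nu/\mu}\widehat P'_\nu$, since $|\nu|=m+1$. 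This proves \eqref{eq:stable-Jack-Pieri}.

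\textbf{Step 3: the shifted Pieri rule.} Apply the spectral map of \cref{thm:stable-Pprime-spectrum}. The left-hand side maps to $\alpha^{m}P_\mu^\#\cdot\alpha P_{(1)}^\#$, and the right-hand side maps to
\begin{equation*}
 \alpha m\,\alpha^mP_\mu^\#+\sum_\nu\psi'_{\nu/\mu}\,\alpha^{m+1}P_\nu^\# .
\end{equation*}
Dividing by $\alpha^{m+1}$ gives \eqref{eq:shifted-Jack-Pieri}.

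\textbf{Main obstacle.} The delicate point is Step 1: confirming that $p_1^N$, with exactly the normalization under which \cref{thm:shifted-pieri} was proved, is the $\times_\alpha$-identity for every $\alpha$. This follows from the fact that $\scalar{\,\cdot\,}{J_\lambda^{(\alpha)}}_1$ evaluated on $p_1^N$ is independent of $\lambda$, because the $p_1^N$-coefficient of $J_\lambda$ is $1$. Whatever scalar normalization makes it the unit at $\alpha=1$ then persists for all $\alpha$, since the computation above does not involve $\alpha$ beyond that coefficient.
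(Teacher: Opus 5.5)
Your Steps 2 and 3 are exactly the paper's proof. The paper also takes the degree-$N$ component $\alpha Np_1^N$ of $\widehat P'_{(1)}$, uses that $p_1^N$ is the identity for $\times_\alpha$ on $\Lambda_N$, splits $m+k$, and applies \eqref{eq:Pprime-Pieri}. It then evaluates on $P'_\lambda$ via \eqref{eq:stable-Pprime-spectrum} and divides by $\alpha^{m+1}$. The paper asserts the unit property without proof, so your Step 1 goes beyond it. As written, however, Step 1 is wrong and should be replaced.

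\textbf{The computational error.} You claim $\scalar{p_1^N}{J_\rho^{(\alpha)}}_1=1$. In fact $\scalar{p_1^N}{p_1^N}_1=z_{1^N}=N!$ and $[p_1^N]J_\rho^{(\alpha)}=1$, so $\scalar{p_1^N}{J_\rho^{(\alpha)}}_1=N!$. Then \eqref{eq:GJ-duality} and \eqref{eq:Gamma-alpha-J} give
\[
 \scalar{p_1^N\times_\alpha g}{J_\rho^{(\alpha)}}_1
 =\frac1{N!}\cdot N!\cdot\scalar{g}{J_\rho^{(\alpha)}}_1 .
\]
Hence $p_1^N$ itself is the unit, for every $\alpha$. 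Your idempotent computation was in fact consistent: $\sum_\lambda E_\lambda^{(\alpha)}$ and $p_1^N$ both pair to $N!$ with every $J_\rho^{(\alpha)}$, so $\sum_\lambda E_\lambda^{(\alpha)}=p_1^N$.

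\textbf{Why it matters.} The claim that $N!\,p_1^N$ is the unit is false. If it were true, Step 2 would pick up a factor $1/N!$ in degree $N$, and the coefficient $\alpha m$ would not come out. A unit is not defined ``up to normalization.''

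\textbf{The $\alpha=1$ check.} This check uses the wrong characteristic convention. In the paper's normalization a permutation $\sigma$ corresponds to $p_{\mathrm{type}(\sigma)}$. This is what gives $s_\lambda\times s_\mu=\delta_{\lambda\mu}s_\lambda/f^\lambda$ and makes $(n)_{|\rho|}p_{\rho1^{n-|\rho|}}$ a normalized class. So $C_{1^N}\mapsto p_1^N$, not $p_1^N/N!$.

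\textbf{What survives.} Your closing scaling argument is valid: $p_1^N$ acts by a scalar independent of $\lambda$ and $\alpha$, equal to $1$ at $\alpha=1$. That does rescue Step 2. Still, the direct computation above is shorter, removes any appeal to $\alpha=1$, and should replace Step 1.
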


\begin{proof}
The degree-$N$ component of $\widehat P'_{(1)}$ is
$\alpha Np_1^N$.  Since $p_1^N$ is the identity for
$\times_\alpha$ on $\Lambda_N$, one obtains
\begin{align}
\widehat P'_\mu\times_\alpha\widehat P'_{(1)}
 &=\alpha\sum_{k\geq0}(m+k)
   \frac{(m+k)!}{k!}P'_\mu p_1^k\notag\\
 &=\alpha m\widehat P'_\mu
   +\alpha\sum_{k\geq1}
    \frac{(m+k)!}{(k-1)!}
    (P'_\mu p_1)p_1^{k-1}.
\end{align}
Apply \eqref{eq:Pprime-Pieri} to the second term to obtain
\eqref{eq:stable-Jack-Pieri}.  Finally, apply both sides to
$P'_\lambda$ and use \eqref{eq:stable-Pprime-spectrum}; after
dividing by $\alpha^{m+1}$, one obtains
\eqref{eq:shifted-Jack-Pieri}.
\end{proof}

\subsection{The Alexandersson--F\'eray recurrence}

Define the shifted Jack structure constants by
\begin{equation}\label{eq:shifted-Jack-structure-constants}
  P_\mu^\#P_\nu^\#
  =\sum_\lambda
    c_{\mu\nu}^{\lambda}(\alpha)P_\lambda^\#.
\end{equation}
The same associativity argument as in
\cref{thm:molev-sagan-recurrence} gives the Jack deformation of the
Molev--Sagan recurrence.

\begin{theorem}[Alexandersson--F\'eray]
\label{thm:Alexandersson-Feray}
Let $\nu\vdash n$ and $\lambda\vdash l$, with $l>n$.  Then
\begin{align}
 (l-n)c_{\mu\nu}^{\lambda}(\alpha)
 &={}
 \sum_{\nu^+/\nu=\square}
  \psi'_{\nu^+/\nu}(\alpha)
  c_{\mu\nu^+}^{\lambda}(\alpha)
 \notag\\
 &\quad-
 \sum_{\lambda/\lambda^-=\square}
  \psi'_{\lambda/\lambda^-}(\alpha)
  c_{\mu\nu}^{\lambda^-}(\alpha).
\label{eq:Alexandersson-Feray}
\end{align}
The initial conditions are
\begin{equation}\label{eq:AF-initial}
 c_{\mu\nu}^{\lambda}(\alpha)
 =\delta_{\lambda\nu}P_\mu^\#(\nu;\alpha)
 \qquad (|\lambda|=|\nu|).
\end{equation}
The recurrence and the initial conditions determine all the
coefficients.
\end{theorem}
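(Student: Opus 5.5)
The plan is to copy the associativity argument of \cref{thm:molev-sagan-recurrence}, using the stable Jack Pieri rule \cref{thm:stable-Jack-Pieri} (in its spectral form \eqref{eq:shifted-Jack-Pieri}) in place of \cref{thm:shifted-pieri}. Since the $P_\lambda^\#$ form a basis of the algebra of shifted Jack functions (\cref{thm:stable-Pprime-spectrum}), coefficients of $P_\lambda^\#$ may be compared. I would compute the coefficient of $P_\lambda^\#$ in the triple product $P_\mu^\#P_\nu^\#P_{(1)}^\#$ in two ways.

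\emph{First grouping.} Expand $(P_\mu^\#P_\nu^\#)P_{(1)}^\#=\sum_\kappa c_{\mu\nu}^{\kappa}(\alpha)P_\kappa^\#P_{(1)}^\#$ and apply \eqref{eq:shifted-Jack-Pieri} to each $P_\kappa^\#P_{(1)}^\#$. The coefficient of $P_\lambda^\#$, with $\lambda\vdash l$, is
\[
 l\,c_{\mu\nu}^{\lambda}(\alpha)+\sum_{\lambda/\lambda^-=\square}\psi'_{\lambda/\lambda^-}(\alpha)\,c_{\mu\nu}^{\lambda^-}(\alpha).
\]
\emph{Second grouping.} Expand $P_\mu^\#(P_\nu^\#P_{(1)}^\#)=n\,P_\mu^\#P_\nu^\#+\sum_{\nu^+/\nu=\square}\psi'_{\nu^+/\nu}(\alpha)P_\mu^\#P_{\nu^+}^\#$. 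Here the coefficient of $P_\lambda^\#$ is
\[
 n\,c_{\mu\nu}^{\lambda}(\alpha)+\sum_{\nu^+}\psi'_{\nu^+/\nu}(\alpha)\,c_{\mu\nu^+}^{\lambda}(\alpha).
\]
Commutativity and associativity of ordinary multiplication, equivalently of $\times_\alpha$ on stable series via \cref{prop:GJ-associative}, make these equal. Rearranging gives \eqref{eq:Alexandersson-Feray}. One should check that the Pieri coefficient for $P^\#_\kappa P^\#_{(1)}$ is indexed by $\lambda/\kappa$, so that it reads $\psi'_{\lambda/\lambda^-}$.

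\emph{Support conditions.} Next I need the Jack analogue of \eqref{eq:shifted-LR-support}, which I expect to be the main obstacle, since the classical proof used the explicit formula $s_\mu^*(\lambda)=(n)_mf^{\lambda/\mu}/f^\lambda$, which has no direct analogue here. I would derive it from the interpolation property \eqref{eq:shifted-Jack-vanishing} and the diagonal normalization \eqref{eq:Psharp-diagonal}, which is nonzero.

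The upper bound $|\lambda|\le|\mu|+|\nu|$ follows from degrees, since $P_\kappa^\#$ has top component $P_\kappa^{(\alpha)}$. For the containment, suppose some $\kappa$ with $\nu\not\subseteq\kappa$ had $c_{\mu\nu}^\kappa\ne0$, and choose such a $\kappa$ of minimal size. Evaluating \eqref{eq:shifted-Jack-structure-constants} at $\kappa$, the left side vanishes because $P_\nu^\#(\kappa)=0$. On the right, only $\kappa'\subseteq\kappa$ contribute, and each such $\kappa'$ also fails to contain $\nu$. By minimality the only surviving term is $c_{\mu\nu}^{\kappa}P_\kappa^\#(\kappa)\neq0$, a contradiction. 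The same argument with $\mu$ gives $\mu\subseteq\lambda$, and hence $|\lambda|\ge\max(|\mu|,|\nu|)$.

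\emph{Initial conditions.} For $|\lambda|=n$, evaluate \eqref{eq:shifted-Jack-structure-constants} at $\lambda$. The left side is $P_\mu^\#(\lambda)P_\nu^\#(\lambda)$, which vanishes unless $\nu\subseteq\lambda$, that is, unless $\lambda=\nu$. On the right side, only $\kappa$ with $\nu\subseteq\kappa\subseteq\lambda$ survive, so only $\kappa=\nu$. Dividing by $P_\nu^\#(\nu)\ne0$ gives \eqref{eq:AF-initial}.

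\emph{Determination.} Finally, induct on $d=l-n\ge0$. In \eqref{eq:Alexandersson-Feray} both sums on the right involve pairs with difference $d-1$: $|\lambda|-|\nu^+|=d-1$ and $|\lambda^-|-|\nu|=d-1$. Since $l-n\neq0$, each coefficient is determined by those at level $d-1$, and the support bound $d\le|\mu|$ makes the process finite.
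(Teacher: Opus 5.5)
Your proof is correct and follows the paper's argument: you compare the coefficient of $P_\lambda^\#$ in $P_\mu^\#P_\nu^\#P_{(1)}^\#$ computed in the two groupings using the shifted Jack Pieri rule, and then induct on $d=|\lambda|-|\nu|$. Your minimal-counterexample derivation of the support condition $\nu\subseteq\lambda$, from the vanishing property and $P_\kappa^\#(\kappa)\neq0$, is a worthwhile addition. The paper's one-line ``the initial values follow from the vanishing property'' tacitly needs exactly this, both to rule out terms $c_{\mu\nu}^{\kappa}$ with $|\kappa|<|\nu|$ and to get $c_{\mu\nu}^{\lambda}=0$ for $\lambda\neq\nu$ of size $n$, which evaluation at $\lambda$ alone does not give.
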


\begin{proof}
Multiply \eqref{eq:shifted-Jack-structure-constants} by
$P_{(1)}^\#$ and use \eqref{eq:shifted-Jack-Pieri}.  Expanding first on
the $\nu$-factor, the coefficient of $P_\lambda^\#$ is
\begin{equation}
 n c_{\mu\nu}^{\lambda}
 +\sum_{\nu^+/\nu=\square}
   \psi'_{\nu^+/\nu}c_{\mu\nu^+}^{\lambda}.
\end{equation}
Expanding instead after the product
$P_\mu^\#P_\nu^\#$, the same coefficient is
\begin{equation}
 l c_{\mu\nu}^{\lambda}
 +\sum_{\lambda/\lambda^-=\square}
   \psi'_{\lambda/\lambda^-}c_{\mu\nu}^{\lambda^-}.
\end{equation}
Equating the two expressions gives
\eqref{eq:Alexandersson-Feray}.  The initial values follow from the
vanishing property \eqref{eq:shifted-Jack-vanishing}.  As before, if
$d=|\lambda|-|\nu|>0$, every coefficient on the right-hand side has
difference $d-1$, so induction on $d$ proves uniqueness.
\end{proof}

At $\alpha=1$, $P'_\lambda=s_\lambda$,
$\psi'_{\nu/\mu}(1)=1$, and the results of this section reduce to the
shifted Schur formulas of \cref{sec:shifted-symmetric-functions}.
The next step is to express specific stable Jack series, notably the
power-sum series $\widehat p_\mu$, by differential operators and to
compare their eigenvalues with Jack characters.

\section{Stable power sums and differential operators}
\label{sec:jack-differential-operators}

We now return to the stable power-sum series
\begin{equation}\label{eq:stable-power-sum-jack}
  \widehat p_\mu
  =\frac{m!p_\mu}{(1-p_1)^{m+1}},
  \qquad \mu\vdash m,
\end{equation}
and study the differential operators representing multiplication by
these series for the Goulden--Jackson product.  Unlike the stable
$P'$-series of the preceding section, their eigenvalues are Jack
characters.  This gives a direct operator realization of the algebra
of polynomial functions on Young diagrams.

For every partition $\mu$, we denote by $\Delta_\mu(\alpha)$ the
operator of multiplication by $\widehat p_\mu$ for the deformed
product:
\begin{equation}\label{eq:Delta-mu-definition}
  \Delta_\mu(\alpha)f
  =\widehat p_\mu\times_\alpha f.
\end{equation}
When $\mu=(r)$ has a single part, we simply write
\begin{equation}\label{eq:Delta-r-notation}
  \Delta_r(\alpha)=\Delta_{(r)}(\alpha).
\end{equation}
Thus the family $\{\Delta_\mu(\alpha)\}$ is a commuting family, and
\eqref{eq:stable-power-sum-spectrum} gives its joint spectrum on the
$Q'$-basis.

\subsection{Jack characters as eigenvalues}

Let $\mathrm{Ch}_\mu^{(\alpha)}(\lambda)$ denote the Jack character in the
normalization for which
\begin{equation}\label{eq:stable-power-sum-spectrum}
  \widehat p_\mu\times_\alpha Q'_\lambda
  =\mathrm{Ch}_\mu^{(\alpha)}(\lambda)Q'_\lambda.
\end{equation}
This formula may equivalently be taken as the normalization of the
Jack characters used here.  Indeed, on the homogeneous component of
degree $n\geq m$, one has
\begin{equation}
  (\widehat p_\mu)_n=(n)_m p_{\mu1^{n-m}}.
\end{equation}
Expanding this expression in the $Q'$-basis and applying
\eqref{eq:GJ-general-eigenvalue} gives
\eqref{eq:stable-power-sum-spectrum} directly.

\begin{proposition}\label{prop:Jack-character-algebra}
The stable span of the $\widehat p_\mu$ is closed under
$\times_\alpha$.  If
\begin{equation}\label{eq:stable-power-sum-product}
  \widehat p_\mu\times_\alpha\widehat p_\nu
  =\sum_\rho d_{\mu\nu}^{\rho}(\alpha)\widehat p_\rho,
\end{equation}
then the coefficients $d_{\mu\nu}^{\rho}(\alpha)$ are the structure
constants of Jack characters:
\begin{equation}\label{eq:Jack-character-product}
 \mathrm{Ch}_\mu^{(\alpha)}\mathrm{Ch}_\nu^{(\alpha)}
 =\sum_\rho d_{\mu\nu}^{\rho}(\alpha)
  \mathrm{Ch}_\rho^{(\alpha)}.
\end{equation}
\end{proposition}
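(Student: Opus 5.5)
The plan is to reduce everything to \cref{thm:stable-Pprime-spectrum} by a change of basis inside each degree, and then read off the structure constants spectrally, as in \cref{prop:stable-shifted-isomorphism}.

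\emph{Step 1: the two stable spans coincide.} For $f\in\Lambda_m$ the map $f\mapsto\widehat f=m!f/(1-p_1)^{m+1}$ is linear. Over $\mathbb Q(\alpha)$, both $\{p_\mu\}_{\mu\vdash m}$ and $\{P'_\mu\}_{\mu\vdash m}$ are bases of $\Lambda_m$. Hence there is an invertible matrix, with entries in $\mathbb Q(\alpha)$, such that
\[
\widehat p_\mu=\sum_{\kappa\vdash m}M_{\mu\kappa}\widehat P'_\kappa ,
\qquad \mu\vdash m ,
\]
and conversely. So the span of the $\widehat p_\mu$ equals the span of the $\widehat P'_\kappa$, and it suffices to prove closure for the latter.

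\emph{Step 2: an element of $\widehat\Lambda$ is determined by its multiplication eigenvalues.} By \cref{thm:Qprime-idempotents}, each $F_n\in\Lambda_n$ can be written as $F_n=\sum_{\lambda\vdash n}u_\lambda Q'_\lambda$. By \eqref{eq:GJ-general-eigenvalue}, the multiplication operator $F_n\times_\alpha(\cdot)$ then has eigenvalue $u_\lambda c_\lambda(\alpha)/n!$ on $Q'_\lambda$. These eigenvalues determine the $u_\lambda$, hence $F_n$. Consequently an element $F\in\widehat\Lambda$ is uniquely determined by the function $\lambda\mapsto$ (eigenvalue of $F\times_\alpha$ on $Q'_\lambda$), with $\lambda$ ranging over all partitions.

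\emph{Step 3: closure.} By \cref{thm:stable-Pprime-spectrum}, the product $\widehat P'_\mu\times_\alpha\widehat P'_\nu$ has eigenvalue $\alpha^{|\mu|+|\nu|}P^\#_\mu(\lambda)P^\#_\nu(\lambda)$ on $Q'_\lambda$, since $Q'_\lambda$ is proportional to $P'_\lambda$. Shifted Jack polynomials form a basis of the algebra of shifted symmetric functions, so this product of eigenvalues is a finite combination $\sum_\kappa e_\kappa\,\alpha^{|\kappa|}P^\#_\kappa$. By the same theorem, $\sum_\kappa e_\kappa\widehat P'_\kappa$ has exactly these eigenvalues. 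By Step 2, it therefore equals $\widehat P'_\mu\times_\alpha\widehat P'_\nu$, which proves closure. Transporting back through Step 1 gives finitely many $d^\rho_{\mu\nu}(\alpha)$ satisfying \eqref{eq:stable-power-sum-product}. These coefficients are unique because the $\widehat p_\rho$ are linearly independent: their lowest homogeneous components are $m!p_\rho$.

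\emph{Step 4: structure constants.} Apply both sides of \eqref{eq:stable-power-sum-product} to $Q'_\lambda$. By commutativity and associativity (\cref{prop:GJ-associative}), the left side acts as the composition of the two diagonal operators $\Delta_\mu(\alpha)$ and $\Delta_\nu(\alpha)$. Using \eqref{eq:stable-power-sum-spectrum}, this gives
\[
\mathrm{Ch}_\mu^{(\alpha)}(\lambda)\,\mathrm{Ch}_\nu^{(\alpha)}(\lambda)
=\sum_\rho d^\rho_{\mu\nu}(\alpha)\,\mathrm{Ch}^{(\alpha)}_\rho(\lambda)
\]
for every partition $\lambda$, which is \eqref{eq:Jack-character-product} as an identity of functions on Young diagrams. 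The map $\widehat p_\rho\mapsto\mathrm{Ch}_\rho^{(\alpha)}$ is injective by Step 2, so the $\mathrm{Ch}_\rho^{(\alpha)}$ are linearly independent. Hence the $d^\rho_{\mu\nu}(\alpha)$ are indeed the structure constants of the Jack characters.

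The main point needing care is Step 2 together with the bookkeeping that all identities hold componentwise in every degree $n$, including small $n$. When $n<|\mu|$ the component of $\widehat P'_\mu$ is $0$, and this matches the vanishing \eqref{eq:shifted-Jack-vanishing}, since no $\mu$ with $|\mu|>n$ fits inside $\lambda\vdash n$. Once this is checked, the remaining steps are formal.
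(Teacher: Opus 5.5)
Your proposal is correct, and your Step 4 is exactly the paper's proof: apply both sides to $Q'_\lambda$, use associativity and \eqref{eq:stable-power-sum-spectrum}, and conclude by linear independence of the Jack characters. Your Steps 1--3 also fill in two points the paper's proof leaves implicit. The first is the closure assertion, which you obtain from the equality of the $\widehat p$- and $\widehat P'$-spans together with \cref{thm:stable-Pprime-spectrum}; the paper simply takes the expansion \eqref{eq:stable-power-sum-product} as given. The second is the linear independence of the $\mathrm{Ch}^{(\alpha)}_\rho$, which you derive from injectivity of the eigenvalue map, whereas the paper only asserts it.
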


\begin{proof}
Apply both sides of \eqref{eq:stable-power-sum-product} to
$Q'_\lambda$.  By associativity and
\eqref{eq:stable-power-sum-spectrum}, the left-hand side acts by
$\mathrm{Ch}_\mu^{(\alpha)}(\lambda)\mathrm{Ch}_\nu^{(\alpha)}(\lambda)$, whereas
the right-hand side acts by
$\sum_\rho d_{\mu\nu}^{\rho}(\alpha)
\mathrm{Ch}_\rho^{(\alpha)}(\lambda)$.  The Jack characters are linearly
independent, and the assertion follows.
\end{proof}

Thus the stable power sums play for Jack characters the role played
by the $p_\mu^\#$ in the classical shifted-symmetric theory.  In
particular, stability of the Jack-character algebra is a consequence
of the existence of the stable series \eqref{eq:stable-power-sum-jack}.

\subsection{The first deformed cut-and-join operator}

We write
\begin{equation}
  D_{p_k}=k\frac{\partial}{\partial p_k},
  \qquad
  D_{p_\lambda}=\prod_iD_{p_{\lambda_i}}.
\end{equation}
By definition, $\Delta_2(\alpha)=\Delta_{(2)}(\alpha)$ represents
multiplication by $\widehat p_2$.  It is the deformed cut-and-join
operator of Goulden and Jackson \cite{GJ96}:
\begin{equation}\label{eq:Delta2-alpha}
\begin{split}
 \Delta_2(\alpha)
 ={}&\sum_{i,j\geq1}
 \left(
   \alpha p_ip_jD_{p_{i+j}}
   +p_{i+j}D_{p_i}D_{p_j}
 \right)\\
 &+(\alpha-1)\sum_{k\geq1}(k-1)p_kD_{p_k}.
\end{split}
\end{equation}
The first line is the usual join--cut operator, with a factor
$\alpha$ on the joining term.  The second line is genuinely
deformed: it vanishes at $\alpha=1$ and records the contribution that
cannot be represented by permutations alone.

For example, applying \eqref{eq:Delta2-alpha} to
$\widehat p_2=2p_2/(1-p_1)^3$ gives
\begin{equation}\label{eq:jack-p2-times-p2}
 \widehat p_2\times_\alpha\widehat p_2
 =\widehat p_{22}+4\widehat p_3
  +2\alpha\widehat p_{11}
  +2(\alpha-1)\widehat p_2.
\end{equation}
Similarly,
\begin{align}
 \Delta_2(\alpha)\widehat p_3
 &=\widehat p_{32}+6\widehat p_4
   +6\alpha\widehat p_{21}
   +6(\alpha-1)\widehat p_3,
 \label{eq:Delta2-p3}\\
 \Delta_2(\alpha)\widehat p_4
 &=\widehat p_{42}+8\widehat p_5+8\widehat p_{31}
   +4\alpha\widehat p_{22}
   +12(\alpha-1)\widehat p_4.
 \label{eq:Delta2-p4}
\end{align}
These identities provide elementary checks of both the stable basis
and the normalization of the deformed product.

\subsection{Elementary stable series and the operators of
Nazarov--Sklyanin}

For a symmetric function $f$ of degree $m$, set
\begin{equation}\label{eq:check-f}
  \check f
  =\frac{m!f[\alpha X]}{(1-p_1)^{m+1}}.
\end{equation}
In particular, $\check e_k=\widehat P'_{(1^k)}$.  The operators
associated with the $\check e_k$ are the adjoints of the
Sekiguchi--Debiard operators at infinity studied by Nazarov and
Sklyanin \cite{NS}.  In our normalization they are
\begin{equation}\label{eq:NS-Bk}
 B_k
 =\sum_{\ell(\lambda)=k}
   \widetilde m_\lambda[\alpha X]D_{m_\lambda},
\end{equation}
where
$\widetilde m_\lambda=(\prod_i m_i(\lambda)!)m_\lambda$ and
$D_f$ denotes the adjoint of multiplication by $f$ for the ordinary
Hall scalar product.  Equivalently,
\begin{equation}\label{eq:Bk-stable-elementary}
 B_k f=\check e_k\times_\alpha f.
\end{equation}

This formula is strikingly close to its classical specialization.
The Jack deformation is absorbed by the plethystic substitution
$X\mapsto\alpha X$; no additional correction term is needed in
\eqref{eq:NS-Bk}.  By contrast, the power-sum operator
\eqref{eq:Delta2-alpha} contains the diagonal correction proportional
to $\alpha-1$.  Newton identities explain how such terms appear when
one passes from the elementary generators to power sums.

For instance, stabilization of the elementary identities gives
\begin{align}
 \check e_{11}&=\check e_1\times_\alpha\check e_1
                 -\alpha\check e_1,
 \label{eq:e11-stable}\\
 \check e_{21}&=\check e_2\times_\alpha\check e_1
                 -2\alpha\check e_2,
 \label{eq:e21-stable}\\
 \check e_{111}&=\check e_1^{\times_\alpha3}
 -3\alpha\check e_1^{\times_\alpha2}
 +2\alpha^2\check e_1.
 \label{eq:e111-stable}
\end{align}
Since
$p_2=e_{11}-2e_2$ and
$p_3=e_{111}-3e_{21}+3e_3$, the multiplication operators associated
with the first power sums are consequently
Since $p_r[\alpha X]=\alpha p_r(X)$, one has
$\check p_r=\alpha\widehat p_r$.  Therefore the preceding identities
give
\begin{align}
 \alpha\Delta_2(\alpha)
 &=B_1^2-\alpha B_1-2B_2,
 \label{eq:Delta2-B}\\
 \alpha\Delta_3(\alpha)
 &=B_1^3-3B_2B_1+3B_3
   -3\alpha B_1^2+6\alpha B_2+2\alpha^2B_1.
 \label{eq:Delta3-B}
\end{align}
Here products denote composition of operators.  Formula
\eqref{eq:Delta3-B}, together with the explicit expression
\eqref{eq:NS-Bk}, reduces the determination of $\Delta_3(\alpha)$ to
a finite application of the Leibniz rule.  The same method applies
to $p_4$, although the resulting differential operator is already
substantially larger.  We carry out these expansions in the next
section.

\section{The operators $\Delta_3(\alpha)$ and $\Delta_4(\alpha)$}
\label{sec:low-degree-jack-operators}

We now expand the operator identities obtained at the end of
\cref{sec:jack-differential-operators}.  The calculations are most
convenient in a basis of normally ordered differential monomials.
They reveal a uniform leading part, inherited from permutations, and
additional terms divisible by $\alpha-1$.

\subsection{Composition of differential monomials}

For two partitions, or more generally two finite multisets of
positive integers, write
\begin{equation}\label{eq:I-J-notation}
 (I\mid J)=p_I D_{p_J}.
\end{equation}
If $A$ is a submultiset of both $J$ and $K$, let $J\setminus A$ and
$K\setminus A$ denote multiset difference.  The Leibniz rule gives
the following finite composition formula.

\begin{lemma}\label{lem:differential-monomial-composition}
For all finite multisets $I,J,K,L$ of positive integers,
\begin{equation}\label{eq:differential-monomial-composition}
 (I\mid J)(K\mid L)
 =\sum_{A\subseteq J\cap K}
 \frac{z_Jz_K}
 {z_Az_{J\setminus A}z_{K\setminus A}}
 \bigl(I\cup(K\setminus A)
 \mid(J\setminus A)\cup L\bigr).
\end{equation}
Here the sum is over submultisets and
$z_\lambda=\prod_r r^{m_r(\lambda)}m_r(\lambda)!$.
\end{lemma}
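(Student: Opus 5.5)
The plan is to compute the composition by commuting the derivative block $D_{p_J}$ past the multiplication block $p_K$. Everything reduces to the one-variable Leibniz rule, applied separately for each part value $r$. The starting point is that the operators $D_{p_r}=r\,\partial/\partial p_r$ commute with one another, and that each is a derivation of the polynomial algebra $\CC[p_1,p_2,\ldots]$ which kills every $p_s$ with $s\neq r$. Consequently, for any $g$,
\[
 (I\mid J)(K\mid L)g=p_I\,D_{p_J}\bigl(p_K\,D_{p_L}g\bigr),
\]
and the whole task is to expand $D_{p_J}(p_K h)$ with $h=D_{p_L}g$. Since everything factors over part values, I will write $D_{p_J}=\prod_r D_{p_r}^{j_r}$ and $p_K=\prod_r p_r^{k_r}$, where $j_r=m_r(J)$ and $k_r=m_r(K)$. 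The factor $D_{p_r}^{j_r}$ interacts only with $p_r^{k_r}$ and commutes with every other $p_s$, so the operator identity factorizes into a product over $r$.

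For a single value $r$, the Leibniz rule for $(r\partial_r)^{j}$ gives
\[
 D_{p_r}^{j}\bigl(p_r^{k}h\bigr)=\sum_{a=0}^{\min(j,k)}\binom{j}{a}r^a(k)_a\,p_r^{k-a}\,D_{p_r}^{j-a}h .
\]
Here $a$ is the number of derivatives that land on $p_r^k$; each such derivative contributes a factor $r$ together with the falling factorial $(k)_a$. The next step is to check that this coefficient equals the $r$-component of the ratio of $z$'s in the statement:
\[
 \frac{r^{j}j!\;r^{k}k!}{r^{a}a!\;r^{j-a}(j-a)!\;r^{k-a}(k-a)!}
 =\binom{j}{a}\frac{k!}{(k-a)!}\,r^{a}.
\]
This is immediate, because the powers of $r$ collapse to $r^{a}$.

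Finally, I take the product over all $r$. A choice of integers $0\le a_r\le\min(j_r,k_r)$ for every $r$ is exactly a submultiset $A\subseteq J\cap K$ with $m_r(A)=a_r$. The product of the per-$r$ coefficients is $z_Jz_K/(z_Az_{J\setminus A}z_{K\setminus A})$, since $z$ is multiplicative over part values. The surviving multiplication factors are $p_{K\setminus A}$, and the surviving derivatives are $D_{p_{J\setminus A}}$. Multiplying by $p_I$ on the left and composing with $D_{p_L}$ on the right gives $p_{I\cup(K\setminus A)}D_{p_{(J\setminus A)\cup L}}$, because multiplication operators commute with each other and the derivatives commute with each other. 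This yields \eqref{eq:differential-monomial-composition}.

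There is no serious obstacle. The only point needing care is the bookkeeping that identifies the Leibniz coefficients with the $z$-ratio, in particular confirming that the factor $r$ built into $D_{p_r}$ produces exactly $r^{m_r(A)}$. I will settle this with the one-variable computation above.
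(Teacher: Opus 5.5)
Your proof is correct and follows the same route as the paper: for each part value $r$ you choose $a_r$ of the $m_r(J)$ derivations to act on $p_K$, which gives the factor $\binom{m_r(J)}{a_r}(m_r(K))_{a_r}r^{a_r}$, and you identify the product over $r$ with the quotient of $z$-factors. Your version also writes out the one-variable Leibniz expansion and the cancellation of powers of $r$, which the paper leaves to the reader.
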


\begin{proof}
For each $r$, choose $a_r$ of the $m_r(J)$ derivations in $p_r$ to
act on $p_K$.  Their contribution is
\begin{equation}
 \binom{m_r(J)}{a_r}
 \bigl(m_r(K)\bigr)_{a_r}r^{a_r}.
\end{equation}
The product of these quantities over $r$ is precisely the quotient
of $z$-factors in \eqref{eq:differential-monomial-composition}.
\end{proof}

This lemma turns every polynomial expression in the operators $B_k$
into a normally ordered differential operator.  Since only finitely
many contractions are possible in each product, the calculation is
finite at every fixed degree.

\subsection{The third operator}

The expansion suggested by \eqref{eq:Delta3-B} has the following
form.  We record it as a computational formula: it follows by a
finite use of \cref{lem:differential-monomial-composition}, and has
also been checked directly on homogeneous components in low degrees.

\begin{conjecture}\label{conj:Delta3-explicit}
The operator $\Delta_3(\alpha)$ is
\begin{align}
\Delta_3(\alpha)
={}&\sum_{i,j,k\geq1}\Bigl(
 p_{i+j+k}D_{p_i}D_{p_j}D_{p_k}
 +\alpha p_{i+k}p_jD_{p_{i+j}}D_{p_k}
 \notag\\
&\qquad
 +\alpha p_{i+j}p_kD_{p_i}D_{p_{j+k}}
 +p_{i+j+k}D_{p_{i+j+k}}
 \notag\\
&\qquad
 +\alpha^2p_ip_jp_kD_{p_{i+j+k}}
 +\alpha p_ip_{j+k}D_{p_{i+k}}D_{p_j}
 \Bigr)
 \notag\\
&+(\alpha-1)\Biggl[
 \frac32\sum_{i,j\geq1}(i+j-2)
 \Bigl(
  \alpha p_ip_jD_{p_{i+j}}
  +p_{i+j}D_{p_i}D_{p_j}
 \Bigr)
 \notag\\
&\hspace{38mm}
 +(2\alpha-1)\sum_{k\geq1}
 \binom{k-1}{2}p_kD_{p_k}
 \Biggr].
\label{eq:Delta3-explicit}
\end{align}
\end{conjecture}

The first three lines of \eqref{eq:Delta3-explicit} specialize at
$\alpha=1$ to the permutation formula
\eqref{eq:A-mu-explicit} for $\mu=(3)$.  We call this the
\emph{top part} and denote its $\alpha$-deformation by
$\Delta_3^{\mathrm{top}}(\alpha)$.  Thus
\begin{equation}\label{eq:Delta3-top-correction}
 \Delta_3(\alpha)
 =\Delta_3^{\mathrm{top}}(\alpha)
  +(\alpha-1)\Delta'_3(\alpha),
\end{equation}
where $\Delta'_3(\alpha)$ is the expression in square brackets.
The correction contains terms with only two, or only one, effective
indices.  This lowering of the number of indices is the operator
counterpart of contractions that are not encoded by permutations.

\subsection{The fourth operator}

The expression for $\Delta_4$ is considerably longer.  It becomes
manageable if the top part is defined from the classical operator
$\mathcal A_{(4)}$ of \eqref{eq:A-mu-explicit}: in every monomial
$(I\mid J)$ of $\mathcal A_{(4)}$, multiply the coefficient by
$\alpha^{\ell(I)-1}$.  We denote the resulting operator by
$\Delta_4^{\mathrm{top}}(\alpha)$.

\begin{conjecture}\label{conj:Delta4-explicit}
One has
\begin{equation}\label{eq:Delta4-top-correction}
 \Delta_4(\alpha)
 =\Delta_4^{\mathrm{top}}(\alpha)
  +(\alpha-1)\Delta'_4(\alpha),
\end{equation}
where
\begin{align}
\Delta'_4(\alpha)
={}&\alpha\sum_{i,j,k,l\geq1}
 (i+j,k+l\mid i+l,j+k)
 \label{eq:Delta4-correction-22}\\
&+2\sum_{i,j,k\geq1}(i+j+k-3)
 \bigl[
  \alpha^2(i,j,k\mid i+j+k)
  +(i+j+k\mid i,j,k)
 \bigr]
 \label{eq:Delta4-correction-31}\\
&+4\alpha\sum_{i,j,k\geq1}(2i+j+k-4)
 (i+j,k\mid i+k,j)
 \label{eq:Delta4-correction-mixed}\\
&+\sum_{i,j\geq1}F_{ij}(\alpha)
 \bigl[
  \alpha(i,j\mid i+j)+(i+j\mid i,j)
 \bigr]
 \label{eq:Delta4-correction-21}\\
&+(6\alpha^2-5\alpha+6)
 \sum_{k\geq1}\binom{k-1}{3}(k\mid k),
 \label{eq:Delta4-correction-11}
\end{align}
with
\begin{equation}\label{eq:Fij-alpha}
\begin{split}
 F_{ij}(\alpha)
 ={}&\alpha\left(
 4\binom{i-1}{2}+4\binom{j-1}{2}
 +3(i-1)(j-1)
 \right)\\
 &-(i+j-2)(i+j-3).
\end{split}
\end{equation}
\end{conjecture}

The decomposition \eqref{eq:Delta4-top-correction} has the same
structural meaning as \eqref{eq:Delta3-top-correction}.  The top part
is obtained from the six-- or twenty-four--term permutation mechanism
of the classical theory, whereas $\Delta'_4$ consists of additional
contractions.  The latter vanish from the full operator at
$\alpha=1$, but survive in the real and quaternionic specializations
$\alpha=2$ and $\alpha=1/2$.

\subsection{Status of the formulas}

The ingredients leading to
\cref{conj:Delta3-explicit,conj:Delta4-explicit} are exact: the
Newton identities, the
Nazarov--Sklyanin formula \eqref{eq:NS-Bk}, and the composition rule
\eqref{eq:differential-monomial-composition}.  Consequently, both
statements can in principle be proved by a finite normal-ordering
calculation.  We retain the conjectural label because our present
derivation was computer-assisted and has not yet been converted into
a complete term-by-term proof.

This status is stronger than mere interpolation in the parameter:
the formulas were obtained in the differential-operator algebra and
not reconstructed from the three specializations
$\alpha=1,2,1/2$.  Later, matrix-integral calculations at
$\alpha=2$ and $\alpha=1/2$ will provide an independent explanation
of the correction terms.

\section{Real Gaussian matrix integrals}
\label{sec:real-matrix-integrals}

We now give a matrix-integral interpretation of the specialization
$\alpha=2$.  Since several normalizations of zonal polynomials and
Gaussian measures coexist in the literature, we begin directly with
the Gaussian model of Hanlon, Stanley, and Stembridge \cite{HSS}; see
also the combinatorial proof of Goulden and Jackson \cite{GJ95}.
This avoids importing normalization factors from the integral
formulas for zonal polynomials in \cite[Chapter VII]{Macdonald}.

\subsection{The real Gaussian model}

Let $M_N(\mathbb R)$ be equipped with the probability measure
\begin{equation}\label{eq:real-Gaussian-measure}
 d\nu(Z)
 =(2\pi)^{-N^2/2}
 \exp\left(-\frac12\operatorname{tr}(ZZ^t)\right)dZ,
 \qquad
 dZ=\prod_{i,j=1}^N dz_{ij}.
\end{equation}
Thus the entries of $Z$ are independent standard real Gaussian
variables and
\begin{equation}\label{eq:real-Gaussian-covariance}
 \mathbb E(z_{ij}z_{kl})=\delta_{ik}\delta_{jl}.
\end{equation}
This is the normalization used in the normally distributed matrix
model of \cite{HSS,GJ95}.  In particular, there is no extra factor in
the covariance.

Let $A$ and $B$ be real symmetric $N\times N$ matrices.  By
orthogonal invariance of \eqref{eq:real-Gaussian-measure}, we may
assume that
\begin{equation}
 A=\operatorname{diag}(a_1,\ldots,a_N),
 \qquad
 B=\operatorname{diag}(b_1,\ldots,b_N).
\end{equation}
For a partition $\lambda\vdash n$, choose a permutation
$\sigma\in\Sym_n$ of cycle type $\lambda$.  In the expansion of
$p_\lambda(AZBZ^t)$, label the two occurrences of $Z$ in the $k$th
factor by $k$ and $\bar k$.  Introduce the two fixed matchings
\begin{align}
 \epsilon
 &=1\bar1\mid2\bar2\mid\cdots\mid n\bar n,
 \label{eq:epsilon-matching}\\
 \delta_\lambda
 &=\bar1\,\sigma(1)\mid\bar2\,\sigma(2)
   \mid\cdots\mid\bar n\,\sigma(n).
 \label{eq:delta-lambda-matching}
\end{align}
If $\delta_1$ and $\delta_2$ are perfect matchings, their union is a
disjoint union of even alternating cycles.  We denote by
$\Lambda(\delta_1,\delta_2)$ the partition formed by their
half-lengths.

Figure~\ref{fig:real-Wick-alternating-cycles} shows this construction
for one matching in degree three.
\begin{figure}[H]
\centering
\begin{tikzpicture}[
  x=1cm,y=1cm,
  vertex/.style={circle,draw,fill=white,inner sep=1.4pt,font=\small},
  wick/.style={red!75!black,very thick},
  eps/.style={gray!75,very thick,dashed},
  cyc/.style={blue!70!black,very thick,densely dotted}
]
\def\panelA{0}
\def\panelB{7.0}

\foreach \x/\lab in {0/1,1.7/2,3.4/3}{
  \node[vertex] (uA\lab) at ({\panelA+\x},1.55) {$\lab$};
  \node[vertex] (bA\lab) at ({\panelA+\x},0) {$\bar\lab$};
}
\draw[eps] (uA1)--(bA1);
\draw[eps] (uA2)--(bA2);
\draw[eps] (uA3)--(bA3);
\draw[wick] (uA1) to[bend left=16] (uA2);
\draw[wick] (bA1) to[bend left=10] (uA3);
\draw[wick] (bA2) to[bend right=16] (bA3);
\node[font=\small] at ({\panelA+1.7},-0.62)
 {$\delta\cup\epsilon:\quad B(\delta)=(3)$};

\foreach \x/\lab in {0/1,1.7/2,3.4/3}{
  \node[vertex] (uB\lab) at ({\panelB+\x},1.55) {$\lab$};
  \node[vertex] (bB\lab) at ({\panelB+\x},0) {$\bar\lab$};
}
\draw[cyc] (bB1)--(uB2);
\draw[cyc] (bB2)--(uB3);
\draw[cyc] (bB3) to[bend left=28] (uB1);
\draw[wick] (uB1) to[bend left=16] (uB2);
\draw[wick] (bB1) to[bend left=10] (uB3);
\draw[wick] (bB2) to[bend right=16] (bB3);
\node[font=\small] at ({\panelB+1.7},-0.62)
 {$\delta\cup\delta_{(3)}:\quad A(\delta)=(3)$};

\draw[wick] (1.7,2.28)--(2.35,2.28)
 node[right,black,font=\small] {$\delta$};
\draw[eps] (4.15,2.28)--(4.8,2.28)
 node[right,black,font=\small] {$\epsilon$};
\draw[cyc] (7.0,2.28)--(7.65,2.28)
 node[right,black,font=\small] {$\delta_{(3)}$};
\end{tikzpicture}
\caption{A Wick matching and its two alternating graphs.  Here
$\delta=12\mid\bar1 3\mid\bar2\bar3$,
$\epsilon=1\bar1\mid2\bar2\mid3\bar3$, and
$\delta_{(3)}=\bar1 2\mid\bar2 3\mid\bar3 1$.  Each union is one
alternating cycle of length six, hence both half-length partitions are
$(3)$.  In general, the components of the left and right graphs give
$B(\delta)$ and $A(\delta)$, respectively.}
\label{fig:real-Wick-alternating-cycles}
\end{figure}

\begin{theorem}[Hanlon--Stanley--Stembridge]
\label{thm:HSS-real-Wick}
With the preceding normalization,
\begin{equation}\label{eq:real-Wick-matching-formula}
 \mathbb E\,p_\lambda(AZBZ^t)
 =\sum_{\delta\in\mathcal M_{2n}}
 p_{\Lambda(\delta,\delta_\lambda)}(A)
 p_{\Lambda(\delta,\epsilon)}(B),
\end{equation}
where $\mathcal M_{2n}$ is the set of perfect matchings of
$\{1,\bar1,\ldots,n,\bar n\}$.
Consequently,
\begin{equation}\label{eq:HSS-Jack-connection-coefficients}
 \mathbb E\,p_\lambda(AZBZ^t)
 =\sum_{\mu,\nu\vdash n}
 a_{\mu\nu}^{\lambda}(2)p_\mu(A)p_\nu(B),
\end{equation}
where, in our normalization, $a_{\mu\nu}^{\lambda}(2)$ is the
number of matchings $\delta$ satisfying
\begin{equation}
 \Lambda(\delta,\delta_\lambda)=\mu,
 \qquad
 \Lambda(\delta,\epsilon)=\nu.
\end{equation}
\end{theorem}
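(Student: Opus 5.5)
The first identity \eqref{eq:real-Wick-matching-formula} comes from expanding traces in coordinates and applying Wick's formula for the centred Gaussian family $(z_{ij})$ with covariance \eqref{eq:real-Gaussian-covariance}. The second, \eqref{eq:HSS-Jack-connection-coefficients}, comes from comparing generating functions through the zonal ($\alpha=2$) case of the kernel \eqref{eq:GJ-three-alphabet-kernel}. I do the combinatorial identity first.

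\emph{Step 1: coordinates.} Write $p_\lambda(AZBZ^t)=\prod_{c}\operatorname{tr}\prod_{k\in c}(AZBZ^t)$, the product running over the cycles $c$ of $\sigma$, taken in the cyclic order of $\sigma$. With $A,B$ diagonal, the $k$th factor contributes
\[
 a_{i_k}\,z_{i_kj_k}\,b_{j_k}\,z_{i_{\sigma(k)}j_k}.
\]
So the occurrence labelled $k$ carries row index $i_k$ and column index $j_k$. The occurrence labelled $\bar k$ carries row index $i_{\sigma(k)}$ and column index $j_k$. The expectation of the product of the $2n$ Gaussian entries is, by Wick's theorem, a sum over $\delta\in\mathcal M_{2n}$. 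Each pair of $\delta$ contributes $\delta_{\text{rows}}\delta_{\text{cols}}$, because the covariance is exactly $\delta_{ik}\delta_{jl}$ with no extra factor.

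\emph{Step 2: counting free indices.} Column indices are forced equal along the edges of $\epsilon$ (since $k$ and $\bar k$ share $j_k$) and along the edges of $\delta$. Hence the free column labels are the components of $\delta\cup\epsilon$. A component of half-length $r$ contains exactly $r$ of the pairs $\{k,\bar k\}$, and therefore collects $b_j^r$. Summing over $j$ gives $p_r(B)$, and the total is $p_{\Lambda(\delta,\epsilon)}(B)$.

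Likewise, row indices are forced equal along $\delta$ and along the edges $\bar k\,\sigma(k)$ of $\delta_\lambda$. Each component of half-length $r$ contains $r$ unbarred vertices $k$, hence $r$ factors $a_{i_k}$, and gives $p_r(A)$. Summing over $\delta$ yields \eqref{eq:real-Wick-matching-formula}.

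\emph{Step 3: the zonal identification.} Grouping matchings by the pair $(\Lambda(\delta,\delta_\lambda),\Lambda(\delta,\epsilon))$ shows that $\mathbb E\,p_\lambda(AZBZ^t)$ is $\sum_{\mu,\nu}N_{\mu\nu}^\lambda p_\mu(A)p_\nu(B)$, where $N_{\mu\nu}^\lambda$ is the stated count. It remains to prove $N_{\mu\nu}^\lambda=a_{\mu\nu}^\lambda(2)$.

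Form the generating series $\sum_{\lambda\vdash n}p_\lambda(X)\,\mathbb E\,p_\lambda(AZBZ^t)/(z_\lambda2^{\ell(\lambda)})$. By the Cauchy identity for $\scalar{\cdot}{\cdot}_2$, it equals $\mathbb E\sum_\lambda J^{(2)}_\lambda(X)J^{(2)}_\lambda(AZBZ^t)/j_\lambda(2)$. I then use the spherical-function property of zonal polynomials, obtained by averaging over $O(N)$ (James; \cite{HSS}):
\[
 \mathbb E\,J_\lambda^{(2)}(AZBZ^t)=\kappa_\lambda\,J^{(2)}_\lambda(A)J^{(2)}_\lambda(B)
\]
for an explicit constant $\kappa_\lambda$. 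Substituting turns the series into $\Phi_{2,n}(X,A,B)$, up to the factors $\kappa_\lambda$. Comparing coefficients with \eqref{eq:GJ-connection-coefficients} then gives $N_{\mu\nu}^\lambda=a_{\mu\nu}^\lambda(2)$.

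\emph{Main obstacle.} The difficulty is the normalization, which is why the statement says ``in our normalization.'' I expect $\kappa_\lambda$ to be of the form $1/J_\lambda^{(2)}(1^N)$ times $\mathbb E\,J^{(2)}_\lambda(ZZ^t)$, with $\mathbb E\,J^{(2)}_\lambda(ZZ^t)=J^{(2)}_\lambda(1^N)$ up to a power of $2$ coming from the real covariance. To check this I would first verify it at $n=1$, where $\mathbb E\,\operatorname{tr}(AZBZ^t)=p_1(A)p_1(B)$. I would then compare with the count of matchings at $n=2$, and absorb any residual $2^{n}$ into the convention \eqref{eq:GJ-connection-coefficients}, consistently with \cite{GJ95}.
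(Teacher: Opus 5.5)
Your Steps 1 and 2, together with the grouping at the start of Step 3, are the paper's proof written out in more detail. The paper stops there: it reads $a_{\mu\nu}^{\lambda}(2)$ ``in our normalization'' as the matching count, and takes the link with the kernel \eqref{eq:GJ-three-alphabet-kernel} from Hanlon--Stanley--Stembridge. The rest of your Step 3 tries to prove that link. The strategy (Cauchy identity for $\langle\cdot,\cdot\rangle_2$ plus the spherical-function property) is sound, but as written it does not close, because everything hinges on $\kappa_\lambda=1$ for every $\lambda$, and your normalization heuristics are wrong. Two-sided orthogonal invariance of the Gaussian gives
\[
 \kappa_\lambda=\frac{\mathbb E\,J^{(2)}_\lambda(ZZ^t)}{J^{(2)}_\lambda(1^N)^2},
\]
with the denominator squared. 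Moreover, $\mathbb E\,J^{(2)}_\lambda(ZZ^t)$ is not $J^{(2)}_\lambda(1^N)$ up to a power of $2$; already $\mathbb E\operatorname{tr}(ZZ^t)=N^2$. Your fallback of absorbing a residual $2^n$ into \eqref{eq:GJ-connection-coefficients} is not allowed. That convention is fixed, so a factor $c^n\neq1$ would make the stated equality false. A $\lambda$-dependent $\kappa_\lambda$ could not be absorbed at all, since the weights $1/j_\lambda(2)$ in $\Phi_{2,n}$ are fixed.

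You can supply the missing step from your own Step 2, with no Wishart moment computation. In \eqref{eq:real-Wick-matching-formula}, the monomial $p_{1^n}(B)$ comes only from $\delta=\epsilon$, and $\Lambda(\epsilon,\delta_\rho)=\rho$. Hence the coefficient of $p_{1^n}(B)$ in your generating series is
\[
 \sum_{\rho\vdash n}\frac{p_\rho(X)p_\rho(A)}{z_\rho2^{\ell(\rho)}}
 =\sum_{\lambda\vdash n}\frac{J^{(2)}_\lambda(X)J^{(2)}_\lambda(A)}{j_\lambda(2)}.
\]
On the other side, every $J^{(2)}_\lambda$ has coefficient $1$ on $p_1^n$, so the same coefficient equals $\sum_\lambda\kappa_\lambda J^{(2)}_\lambda(X)J^{(2)}_\lambda(A)/j_\lambda(2)$. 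For $N\geq n$ these products are linearly independent, so $\kappa_\lambda=1$; equivalently, $\mathbb E\,J^{(2)}_\lambda(ZZ^t)=J^{(2)}_\lambda(1^N)^2$. As a sanity check at $n=2$, the matchings of four points give $\mathbb E\,J_{(2)}(AZBZ^t)=J_{(2)}(A)J_{(2)}(B)$, and likewise for $(1,1)$. With this inserted, your Step 3 becomes a genuine proof of the identification $N^{\lambda}_{\mu\nu}=a^{\lambda}_{\mu\nu}(2)$, which the paper itself does not prove but imports.
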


\begin{proof}
The $2n$ matrix entries in the expansion of
$p_\lambda(AZBZ^t)$ are paired by Wick's formula.  A Wick matching
$\delta$ identifies the column indices along the components of
$\delta\cup\epsilon$ and the row indices along the components of
$\delta\cup\delta_\lambda$.  Summing the free indices gives the two
power sums in \eqref{eq:real-Wick-matching-formula}.  Grouping
matchings by their two types proves
\eqref{eq:HSS-Jack-connection-coefficients}.
\end{proof}

The last formula is the $\alpha=2$ coproduct underlying the
Goulden--Jackson product.  It is important here that the coefficients
are fixed by the matching formula itself; no choice of normalization
for a zonal basis remains.

\subsection{Two normalization checks}

For $\lambda=(2)$ there are three Wick matchings, and
\begin{equation}\label{eq:real-p2-moment}
\begin{split}
 \mathbb E\,p_2(AZBZ^t)
 ={}&p_2(A)p_{11}(B)+p_{11}(A)p_2(B)\\
 &+p_2(A)p_2(B).
\end{split}
\end{equation}
The first two terms are contributed by bipartite matchings, that is,
matchings whose edges join barred to unbarred vertices.  They are the
two matchings encoded by permutations in $\Sym_2$.  The last term is
contributed by
\begin{equation}
 12\mid\bar1\bar2,
\end{equation}
and has no permutation analogue.  It is the first source of the
correction proportional to $\alpha-1$ in the differential operators.

At degree four, direct enumeration of the $7!!=105$ matchings gives
the following symmetric matrix.  Rows record
$\Lambda(\delta,\delta_{(4)})$ and columns record
$\Lambda(\delta,\epsilon)$:
\begin{equation}\label{eq:real-degree-four-table}
\begin{array}{c|rrrrr}
 &1111&211&22&31&4\\ \hline
1111&0&0&0&0&1\\
211 &0&0&2&4&6\\
22  &0&2&1&4&5\\
31  &0&4&4&8&16\\
4   &1&6&5&16&20
\end{array}
\end{equation}
The entries sum to $105$.  As a further check, if $q(\delta)$ is the
number of edges joining two unbarred vertices, the matchings split as
\begin{equation}\label{eq:real-q-distribution}
 \#\{q=0\}=24,
 \qquad
 \#\{q=1\}=72,
 \qquad
 \#\{q=2\}=9.
\end{equation}
The $24=4!$ matchings with $q=0$ are precisely the bipartite ones.

\subsection{The Cauchy deformation and the propagator}

To pass from fixed moments to stable series, consider
\begin{equation}\label{eq:real-deformed-integral}
 \int_{M_N(\mathbb R)}
 p_\rho(AZBZ^t)
 \exp\left(\frac12p_1(AZBZ^t)\right)d\nu(Z).
\end{equation}
The factor $1/2$ is essential: it is the exponent $1/\alpha$ in the
Jack Cauchy kernel at $\alpha=2$.  For diagonal $A$ and $B$, the
unnormalized density in \eqref{eq:real-deformed-integral} is
\begin{equation}
 (2\pi)^{-N^2/2}
 \exp\left[-\frac12\sum_{i,j}
 (1-a_ib_j)z_{ij}^2\right]dZ.
\end{equation}
Assuming $|a_ib_j|<1$ for convergence, its total mass is
\begin{equation}\label{eq:real-partition-function}
 \mathcal Z_{\mathbb R}(A,B)
 =\prod_{i,j}(1-a_ib_j)^{-1/2}
 =\sigma_1(AB)^{1/2}.
\end{equation}
All identities may alternatively be read as formal power-series
identities in the eigenvalues of $A$ and $B$.

After division by \eqref{eq:real-partition-function}, the modified
Gaussian covariance is
\begin{equation}\label{eq:real-modified-propagator}
 \left\langle\!\left\langle
 z_{ij}z_{kl}
 \right\rangle\!\right\rangle
 =\frac{\delta_{ik}\delta_{jl}}{1-a_ib_j}.
\end{equation}
The geometric expansion of this propagator attaches a positive
integer to every Wick edge.  It is this edge-labeling that produces
the infinite sums occurring in the stable differential operators.

\subsection{Recovery of $\Delta_2(2)$}

Applying Wick's formula with the modified propagator gives
\begin{align}
 \left\langle\!\left\langle
 p_2(AZBZ^t)
 \right\rangle\!\right\rangle
 ={}&\sum_{i,j\geq1}p_{i+j}(A)p_i(B)p_j(B)
 \notag\\
 &+\sum_{i,j\geq1}p_i(A)p_j(A)p_{i+j}(B)
 \notag\\
 &+\sum_{k\geq1}(k-1)p_k(A)p_k(B).
\label{eq:real-stable-p2-coproduct}
\end{align}
As explained in \cite[Eq. (58)]{LT}, such expressions can be converted into differential operators. 
In the passage from this coproduct to our differential-operator
normalization, a monomial $p_I(A)p_J(B)$ receives the factor
$2^{\ell(I)-1}$.  Denote by $\mathcal H^{\mathbb R}_2$ the operator
obtained in this way.  Then \eqref{eq:real-stable-p2-coproduct} yields
\begin{align}
 \sum_{i,j\geq1}\left(
 2p_ip_jD_{p_{i+j}}+p_{i+j}D_{p_i}D_{p_j}
 \right)
 +\sum_{k\geq1}(k-1)p_kD_{p_k}.
\end{align}
By \eqref{eq:Delta2-alpha}, this is exactly
\begin{equation}\label{eq:real-Delta2-identification}
 \boxed{\mathcal H^{\mathbb R}_2=\Delta_2(2).}
\end{equation}
Thus the non-bipartite matching already identified in
\eqref{eq:real-p2-moment} accounts precisely for the diagonal
correction in the deformed cut-and-join operator.

\subsection{Recovery of $\Delta_3(2)$}

There are $5!!=15$ perfect matchings on six vertices.  Six of them
are bipartite and are naturally indexed by $\Sym_3$.  Their
contribution is exactly the top part of
\eqref{eq:Delta3-explicit} at $\alpha=2$:
\begin{align}
\Delta_3^{\mathrm{top}}(2)
={}&\sum_{i,j,k\geq1}\bigl[
 (i+j+k\mid i,j,k)
 +2(i+k,j\mid i+j,k)
 \notag\\
&\qquad
 +2(i+j,k\mid i,j+k)
 +(i+j+k\mid i+j+k)
 \notag\\
&\qquad
 +4(i,j,k\mid i+j+k)
 +2(i,j+k\mid i+k,j)
 \bigr].
\label{eq:real-Delta3-top}
\end{align}

The nine non-bipartite matchings split into three families.  After
the edge labels have been summed, their total contribution is
\begin{align}
\mathcal R_3={}&
3\sum_{k\geq1}\binom{k-1}{2}(k\mid k)
\notag\\
&+\frac32\sum_{i,j\geq1}(i+j-2)
\bigl[2(i,j\mid i+j)+(i+j\mid i,j)\bigr].
\label{eq:real-Delta3-correction}
\end{align}
Indeed, the coefficient $\binom{k-1}{2}$ counts compositions of
$k$ into three positive parts.  In the two binary families, fixing
$i$ and $j$ leaves $i+j-2$ possible positions for the internal cut;
the factor $2^{\ell(I)-1}$ accounts for the asymmetry between the two
orientations of the operator.

Evaluating the correction term in
\eqref{eq:Delta3-top-correction} at $\alpha=2$ gives precisely
\eqref{eq:real-Delta3-correction}.  We have therefore proved the
following specialization of \cref{conj:Delta3-explicit}.

\begin{proposition}\label{prop:real-Delta3}
The operator obtained from the real Gaussian integral with insertion
$p_3(AZBZ^t)$ is
\begin{equation}
 \mathcal H^{\mathbb R}_3
 =\Delta_3^{\mathrm{top}}(2)+\mathcal R_3
 =\Delta_3(2).
\end{equation}
Moreover, the top part is contributed by bipartite matchings and the
correction $\mathcal R_3=\Delta'_3(2)$ by non-bipartite matchings.
\end{proposition}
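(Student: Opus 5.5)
The plan is to compute the modified-propagator moment $\langle\!\langle p_3(AZBZ^t)\rangle\!\rangle$ by Wick's formula over the $15$ perfect matchings of $\{1,\bar1,2,\bar2,3,\bar3\}$. The computation follows exactly the pattern used for \eqref{eq:real-stable-p2-coproduct}. For each matching $\delta$, the row indices are identified along the components of $\delta\cup\delta_{(3)}$ and the column indices along those of $\delta\cup\epsilon$, as in the proof of \cref{thm:HSS-real-Wick}. The only new feature is that each Wick edge carries the propagator $\delta_{ik}\delta_{jl}/(1-a_ib_j)$, which we expand geometrically. Each edge therefore receives a positive integer label $l_e$, together with a factor $a_i^{l_e}b_j^{l_e}$, possibly shifted by the single $a$ and $b$ already present in the trace. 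Summing the free indices along a component of $\delta\cup\delta_{(3)}$ produces $p_s(A)$, where $s$ is the total label exponent along that component; columns likewise produce $p_t(B)$. One then converts to operators by the rule of \cite[Eq. (58)]{LT}, attaching the factor $2^{\ell(I)-1}$ to $p_I(A)p_J(B)$.

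Next I would split the matchings. The six bipartite matchings correspond bijectively to $\sigma\in\Sym_3$: each edge joins some $\bar k$ to some $\sigma(k)$. For these, the labelled contribution is precisely the $\alpha=1$ formula \eqref{eq:A-mu-explicit} for $\mu=(3)$, written as a coproduct $\sum_L p^{\sigma\tau}_L(A)p^\tau_L(B)$. After the factor $2^{\ell(I)-1}$ is applied, it becomes \eqref{eq:real-Delta3-top}. This matches the top part of \eqref{eq:Delta3-explicit} at $\alpha=2$ term by term, since the powers of $\alpha$ there are exactly $\alpha^{\ell(I)-1}$.

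The nine non-bipartite matchings are those with $q(\delta)=1$; a perfect matching on three unbarred and three barred vertices cannot have $q=2$. Up to the cyclic symmetry of $p_3$, they fall into three families by the cycle types of the two alternating graphs: type $((3),(3))$, type $((2,1),(3))$ and type $((3),(2,1))$, with three matchings each. I would verify this distribution by listing the matchings, exactly as was done in degree four.

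For each family I would sum over the edge labels. In the $((3),(3))$ family a single alternating cycle on each side carries all three labels. Once the shifts coming from the $a$'s and $b$'s in the trace are accounted for, the sum gives $\sum_k$ (number of compositions of $k$ into three positive parts) $p_k(A)p_k(B)$, that is $\binom{k-1}{2}(k\mid k)$ with multiplicity $3$. In the binary families two labels merge into one part and the third stays separate. Counting positions of the internal cut gives $(i+j-2)$, and the symmetrization over the three matchings gives the factor $\tfrac32$ in \eqref{eq:real-Delta3-correction}.

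Finally I would evaluate $(\alpha-1)\Delta'_3(\alpha)$ at $\alpha=2$. It equals $\tfrac32\sum(i+j-2)[2(i,j\mid i+j)+(i+j\mid i,j)]+3\sum\binom{k-1}{2}(k\mid k)$, which agrees with $\mathcal R_3$. The main obstacle is this label bookkeeping for non-bipartite edges. An edge joining $i$ to $k$ (both unbarred) traverses the trace differently from a bipartite edge, so whether the label shift is $l$ or $l-1$ must be checked carefully. This is exactly what produces $i+j-2$ rather than $i+j-1$. I would settle it by first rederiving the $(k-1)p_kD_{p_k}$ term of \eqref{eq:real-stable-p2-coproduct} from the matching $12\mid\bar1\bar2$ and then applying the same convention in degree three.
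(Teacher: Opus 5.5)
Your proposal is correct and follows the paper's argument essentially step for step: Wick's formula with the geometrically expanded propagator, then the $6+9$ split, in which the bipartite matchings (indexed by $\Sym_3$) give $\Delta_3^{\mathrm{top}}(2)$ and the three families of three non-bipartite matchings give $\mathcal R_3$, and finally comparison with $(\alpha-1)\Delta'_3(\alpha)$ at $\alpha=2$. The only correction concerns the ``main obstacle'' you flag: the label convention is uniform (every Wick edge, bipartite or not, carries a label $u_e\geq1$ that is added to both its $A$-block and its $B$-block), and $i+j-2$ comes from symmetrizing the $(j-1)$ contribution of a single binary-family matching into $\tfrac12\bigl[(i-1)+(j-1)\bigr]$, with the three matchings then supplying the factor $3$, rather than from any label shift.
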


\subsection{Recovery of $\Delta_4(2)$}

For a matching $\delta$ on eight vertices, let $q(\delta)$ be the
number of edges joining two unbarred vertices.  The distribution
\eqref{eq:real-q-distribution} decomposes the integral operator as
\begin{equation}
 \mathcal H^{\mathbb R}_4=X_0+X_1+X_2,
\end{equation}
where $X_q$ is the contribution of matchings with $q(\delta)=q$.
The $24$ matchings contributing to $X_0$ are bipartite, hence
\begin{equation}\label{eq:real-X0}
 X_0=\Delta_4^{\mathrm{top}}(2).
\end{equation}

The nine matchings with $q=2$ give
\begin{align}
X_2={}&
2\sum_{i,j,k,l\geq1}
(i+j,k+l\mid i+l,j+k)
\notag\\
&+2\sum_{i,j\geq1}(i-1)(j-1)
\bigl[2(i,j\mid i+j)+(i+j\mid i,j)\bigr]
\notag\\
&+4\sum_{k\geq1}\binom{k-1}{3}(k\mid k).
\label{eq:real-X2}
\end{align}
The first line comes from the unique matching whose two alternating
graphs both have two components.  This is the real-matrix origin of
the term
\begin{equation}
2\sum_{i,j,k,l\geq1}(i+j,k+l\mid i+l,j+k),
\end{equation}
which cannot arise from a pair of permutations.

For completeness, the $72$ matchings with $q=1$ split according to
the numbers of components on the two sides as follows:
\begin{equation}\label{eq:real-q1-class-table}
\begin{array}{c|rrrrrrrrrr}
\text{class}&1&2&3&4&5&6&7&8&9&10\\ \hline
\text{number}&6&4&8&12&4&2&6&12&2&16\\
2^{\ell(I)-1}\text{-weighted}
&24&8&16&24&8&4&6&12&2&16
\end{array}
\end{equation}
Regrouping these classes by the shape of the resulting differential
monomial gives
\begin{align}
X_1={}&
2\sum_{i,j,k\geq1}(i+j+k-3)
\bigl[4(i,j,k\mid i+j+k)+(i+j+k\mid i,j,k)\bigr]
\notag\\
&+8\sum_{i,j,k\geq1}(2i+j+k-4)
(i+j,k\mid i+k,j)
\notag\\
&+\sum_{i,j\geq1}G_{ij}
\bigl[2(i,j\mid i+j)+(i+j\mid i,j)\bigr]
\notag\\
&+16\sum_{k\geq1}\binom{k-1}{3}(k\mid k),
\label{eq:real-X1}
\end{align}
where
\begin{equation}\label{eq:real-Gij}
 G_{ij}
 =6\binom{i-1}{2}+6\binom{j-1}{2}
  +2(i-1)(j-1).
\end{equation}

Adding \eqref{eq:real-X2} and \eqref{eq:real-X1}, the binary
coefficient becomes
\begin{equation}
G_{ij}+2(i-1)(j-1)=F_{ij}(2),
\end{equation}
where $F_{ij}(\alpha)$ is defined in
\eqref{eq:Fij-alpha}, and the diagonal coefficient becomes
\begin{equation}
16+4=20=6\cdot2^2-5\cdot2+6.
\end{equation}
All five blocks in \eqref{eq:Delta4-correction-22}--
\eqref{eq:Delta4-correction-11} are therefore recovered, term by
term.  This proves the real specialization of
\cref{conj:Delta4-explicit}.

\begin{proposition}\label{prop:real-Delta4}
The operator obtained from the real Gaussian integral with insertion
$p_4(AZBZ^t)$ is
\begin{equation}
 \boxed{
 \mathcal H^{\mathbb R}_4
 =X_0+X_1+X_2
 =\Delta_4^{\mathrm{top}}(2)+\Delta'_4(2)
 =\Delta_4(2).}
\end{equation}
The bipartite matchings give the top part and the non-bipartite
matchings give the entire correction.
\end{proposition}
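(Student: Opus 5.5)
The identity follows by assembling the three pieces $X_0,X_1,X_2$ computed above and comparing them, block by block, with \eqref{eq:Delta4-top-correction} evaluated at $\alpha=2$. First, the real Wick expansion with the modified propagator \eqref{eq:real-modified-propagator} is applied to $p_4(AZBZ^t)$. Each of the $105$ matchings $\delta\in\mathcal M_8$ contributes a sum over edge labels. The labels along each alternating cycle of $\delta\cup\delta_{(4)}$ add up to give the $A$-indices, and those along $\delta\cup\epsilon$ give the $B$-indices. We then pass to a differential operator by attaching the factor $2^{\ell(I)-1}$ to each monomial $p_I(A)p_J(B)$, exactly as in the recovery of $\Delta_2(2)$. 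The resulting $\mathcal H^{\mathbb R}_4$ is, by construction, additive over the partition of $\mathcal M_8$ by $q(\delta)$. This gives $\mathcal H^{\mathbb R}_4=X_0+X_1+X_2$ immediately.

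Second, we identify $X_0$. The $24$ bipartite matchings correspond bijectively to $\tau\in\Sym_4$. For each such matching, the two alternating graphs reproduce the cycle structures $p_L^{\sigma\tau}$ and $p_L^\tau$ of \eqref{eq:A-mu-explicit}, where the $l_i$ are the edge labels. The weight $2^{\ell(I)-1}$ is precisely the rule defining $\Delta_4^{\mathrm{top}}(\alpha)$ from $\mathcal A_{(4)}$, specialized at $\alpha=2$. This proves \eqref{eq:real-X0}.

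Third, we take \eqref{eq:real-X2} and \eqref{eq:real-X1} as established by the enumeration, using the class table \eqref{eq:real-q1-class-table}. We add them and compare with $\Delta'_4(2)$, since $\alpha-1=1$. The comparison goes block by block:
\begin{itemize}
\item The $(2,2)$ block: $X_2$ gives $2\sum(i+j,k+l\mid i+l,j+k)$, which matches $\alpha=2$ in \eqref{eq:Delta4-correction-22}.
\item The ternary blocks: $X_1$ gives $2(i+j+k-3)[4(\cdot)+(\cdot)]$, which is \eqref{eq:Delta4-correction-31} with $\alpha^2=4$. It also gives $8(2i+j+k-4)$, which is $4\alpha$ in \eqref{eq:Delta4-correction-mixed}.
\item The binary block: the coefficient is $G_{ij}+2(i-1)(j-1)$. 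Expanding, this equals $2\bigl(4\binom{i-1}{2}+4\binom{j-1}{2}+3(i-1)(j-1)\bigr)-(i+j-2)(i+j-3)=F_{ij}(2)$. The check uses $(i+j-2)(i+j-3)=2\binom{i-1}2+2\binom{j-1}2+2(i-1)(j-1)$.
\item The diagonal block: the coefficient is $16+4=20=6\cdot4-10+6$.
\end{itemize}
Adding $X_0$ then gives $\Delta_4^{\mathrm{top}}(2)+\Delta'_4(2)$. Finally, it must be noted that $\Delta_4(2)$ here means the right-hand side of \cref{conj:Delta4-explicit} at $\alpha=2$. The final equality is therefore an identity of explicit operators, not a claim about the abstract $\Delta_4(\alpha)$ that would require the conjecture.

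The main obstacle is the second half of the third step: checking that the $72$ matchings with $q=1$ really regroup into the blocks of \eqref{eq:real-X1}. For each class we would fix a representative matching and read off its two alternating graphs. We would then count, for fixed external indices, the number of compositions of the edge labels compatible with them. For example, $i+j+k-3$ counts one internal cut among three labelled chains, and $\binom{k-1}{3}$ counts four positive parts summing to $k$. Finally we would sum the multiplicities given in the table. The first thing to check is the mixed block $(2i+j+k-4)$, which is asymmetric. Its asymmetry should come from one chain of length $i$ being traversed twice, once through each of the two merged cycles. This is the step most prone to off-by-one errors in the label counts, and it is where I would cross-check numerically on low-degree components.
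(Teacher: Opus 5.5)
\textbf{Agreement with the paper.} Your computational core is the paper's own argument. You split $\mathcal H^{\mathbb R}_4$ by $q(\delta)$, identify the $24$ bipartite matchings with $\Delta_4^{\mathrm{top}}(2)$, take \eqref{eq:real-X2} and \eqref{eq:real-X1} from the enumeration, and add block by block. For those two formulas the paper offers only the class table \eqref{eq:real-q1-class-table} and a mechanical check, so you are on the same footing there. Your block comparisons are all correct. Your identity $(i+j-2)(i+j-3)=2\binom{i-1}2+2\binom{j-1}2+2(i-1)(j-1)$ supplies the verification of $G_{ij}+2(i-1)(j-1)=F_{ij}(2)$, which the paper merely asserts.

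\textbf{The gap: your reading of the final equality.} In the paper, $\Delta_4(\alpha)$ is not the right-hand side of \cref{conj:Delta4-explicit}. It is defined in \eqref{eq:Delta-mu-definition} as the operator $f\mapsto\widehat p_4\times_\alpha f$, and the last ``$=$'' in the box is meant in that sense. This is why the paper can conclude that the computation proves the real specialization of the conjecture. Under your reading, the last equality reduces to the definitional identity $\Delta_4^{\mathrm{top}}(2)+(2-1)\Delta'_4(2)$, and the proposition says nothing about the Goulden--Jackson product.

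It is also not true that the intended reading ``would require the conjecture.'' What you are missing is the a priori identification $\mathcal H^{\mathbb R}_4=\widehat p_4\times_2(\cdot)$, which the paper takes from the matrix model, independently of any formula for $\Delta_4$. The chain is as follows.
\begin{itemize}
\item \cref{thm:HSS-real-Wick} shows that the real Gaussian expectation is the $\alpha=2$ Goulden--Jackson coproduct, with coefficients $a^{\lambda}_{\mu\nu}(2)$.
\item Inserting $\exp(\frac12 p_1(AZBZ^t))$, whose mass is $\sigma_1(AB)^{1/2}$, turns this into the stable coproduct with the modified propagator \eqref{eq:real-modified-propagator}.
\item The coproduct-to-operator conversion used for \eqref{eq:real-stable-p2-coproduct}, with the factor $2^{\ell(I)-1}$, turns that into the multiplication operator. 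Its normalization is checked at $r=2$ against the proven formula \eqref{eq:Delta2-alpha}.
\end{itemize}
You should state this identification at the outset rather than redefining $\Delta_4(2)$. With it in place, your enumeration shows that the true operator $\Delta_4(2)$ coincides with the displayed candidate formula at $\alpha=2$, which is the actual content of the proposition.
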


The same enumeration was also checked mechanically by generating all
perfect matchings and all positive edge-labelings of total weight at
most nine.  The comparison is coefficientwise: it involves $49$
operator types for $\Delta'_3(2)$ and $154$ operator types for
$\Delta'_4(2)$, with no discrepancy.

\section{Quaternionic Gaussian matrix integrals}
\label{sec:quaternionic-matrix-integrals}

We now turn to the matrix specialization $\alpha=1/2$.  Since
quaternionic notation is less standard than its real and complex
counterparts, we give all conventions needed below.  In particular, we
will not use a zonal-polynomial integral as a black box: the normalization
will be fixed by elementary Gaussian contractions.

\subsection{Quaternions, matrices, and traces}

Let $\mathbb H$ be the real algebra with basis
$1,\mathbf i,\mathbf j,\mathbf k$ and relations
\[
 \mathbf i^2=\mathbf j^2=\mathbf k^2
 =\mathbf i\mathbf j\mathbf k=-1.
\]
For
$q=x_0+\mathbf i x_1+\mathbf jx_2+\mathbf kx_3$, set
\[
 \bar q=x_0-\mathbf i x_1-\mathbf jx_2-\mathbf kx_3,
 \qquad
 \operatorname{Re}q=\frac{q+\bar q}{2},
 \qquad
 |q|^2=q\bar q=\sum_{r=0}^3x_r^2.
\]
Conjugation reverses products: $\overline{pq}=\bar q\bar p$.

If $Z=(z_{ij})$ is a quaternionic matrix, its adjoint is
$Z^*=\bar Z^{,t}$.  The naive quaternionic trace is not cyclic.  We
therefore use the real trace
\[
 \operatorname{tr}_{\mathbb H}Z
 =\operatorname{Re}\sum_i z_{ii},
\]
which satisfies
$\operatorname{tr}_{\mathbb H}(XY)=\operatorname{tr}_{\mathbb H}(YX)$.
Equivalently, under the standard embedding
\[
 x_0+\mathbf i x_1+\mathbf jx_2+\mathbf kx_3
 \longmapsto
 \begin{pmatrix}
 x_0+\mathrm i x_1&x_2+\mathrm i x_3\\
 -x_2+\mathrm i x_3&x_0-\mathrm i x_1
 \end{pmatrix},
\]
$\operatorname{tr}_{\mathbb H}$ is one half of the ordinary complex
trace.  Thus, if $H$ is quaternionic Hermitian, the quantities
$p_r(H)=\operatorname{tr}_{\mathbb H}(H^r)$ are real and are the power
sums of its real eigenvalues, counted once.

\subsection{A normalized quaternionic Gaussian}

Let
\begin{equation}\label{eq:standard-quaternionic-Gaussian}
 z=\frac12(\xi_0+\mathbf i\xi_1+\mathbf j\xi_2+\mathbf k\xi_3),
\end{equation}
where the $\xi_r$ are independent real $N(0,1)$ variables.  Then
\begin{equation}\label{eq:quaternionic-basic-moments}
 \mathbb E(z\bar z)=1,
 \qquad
 \mathbb E(z^2)=-\frac12.
\end{equation}
For fixed $a,b\in\mathbb H$, expansion in the four real components
gives the two contractions
\begin{align}
 \mathbb E(zazb)&=-\frac12\,\bar a b,
 \label{eq:quaternionic-twisted-contraction}\\
 \mathbb E(za\bar z b)&=\operatorname{Re}(a)b.
 \label{eq:quaternionic-untwisted-contraction}
\end{align}
Indeed, these identities reduce respectively to
\[
 \sum_{e\in\{1,\mathbf i,\mathbf j,\mathbf k\}}eae=-2\bar a,
 \qquad
 \sum_{e\in\{1,\mathbf i,\mathbf j,\mathbf k\}}ea\bar e
 =4\operatorname{Re}(a).
\]
These formulas are the basic reason that quaternionic Wick
contractions retain orientation information: pairing equal rather than
conjugate variables reverses the intervening product and contributes a
factor $-1/2$.

The two local operations are represented in
Figure~\ref{fig:quaternionic-Wick-ribbons}.
\begin{figure}[H]
\centering
\begin{tikzpicture}[
  x=1cm,y=1cm,
  endpoint/.style={circle,fill=black,inner sep=1.5pt},
  band/.style={line width=2.2pt,draw=blue!55},
  boundary/.style={line width=.7pt,draw=blue!85!black},
  orient/.style={-{Stealth[length=2.2mm]},red!75!black,thick}
]
\node[font=\bfseries] at (2.3,2.35) {untwisted contraction};
\node[endpoint] (ul1) at (0.7,1.55) {};
\node[endpoint] (ul2) at (0.7,0.25) {};
\node[endpoint] (ur1) at (3.9,1.55) {};
\node[endpoint] (ur2) at (3.9,0.25) {};
\draw[band] (ul1)--(ur1);
\draw[band] (ul2)--(ur2);
\draw[boundary] (ul1)--(ur1);
\draw[boundary] (ul2)--(ur2);
\draw[orient] (1.15,1.82)--(2.1,1.82);
\draw[orient] (2.1,-0.02)--(1.15,-0.02);
\node[font=\small] at (2.3,-0.55)
 {$z\,a\,\bar z\,b\;\longmapsto\;\operatorname{Re}(a)b$};

\begin{scope}[xshift=6.1cm]
\node[font=\bfseries] at (2.3,2.35) {twisted contraction};
\node[endpoint] (tl1) at (0.7,1.55) {};
\node[endpoint] (tl2) at (0.7,0.25) {};
\node[endpoint] (tr1) at (3.9,1.55) {};
\node[endpoint] (tr2) at (3.9,0.25) {};
\draw[band] (tl1) to[out=0,in=180] (tr2);
\draw[band] (tl2) to[out=0,in=180] (tr1);
\draw[white,line width=3.8pt] (2.03,1.25)--(2.57,0.55);
\draw[boundary] (tl1) to[out=0,in=180] (tr2);
\draw[boundary] (tl2) to[out=0,in=180] (tr1);
\draw[orient] (1.05,1.83)--(2.0,1.45);
\draw[orient] (3.55,1.83)--(2.6,1.45);
\node[font=\small] at (2.3,-0.55)
 {$z\,a\,z\,b\;\longmapsto\;-\frac12\bar a b$};
\end{scope}
\end{tikzpicture}
\caption{The two elementary quaternionic Wick contractions in the
normalization $\mathbb E(z\bar z)=1$.  Pairing conjugate occurrences
preserves the two sides of the ribbon.  Pairing equally oriented
occurrences exchanges them, producing a half-twist, conjugating the
intervening factor, and contributing $-1/2$.  For a complete pairing,
these local ribbons form the M\"obius graph whose Euler characteristic
determines the global weight in
\eqref{eq:Bryc-Pierce-Wick-weight}.}
\label{fig:quaternionic-Wick-ribbons}
\end{figure}

For later reference, the general Wick rule can be encoded by M\"obius
graphs.  Pair the Gaussian occurrences in a product of real traces.
A pair consisting of a variable and its conjugate gives an untwisted
edge, whereas a pair of equally oriented variables gives a twisted
edge.  With the unscaled convention
$Z=\xi_0+\mathbf i\xi_1+\mathbf j\xi_2+\mathbf k\xi_3$, Bryc and
Pierce \cite[Theorem~3.1]{BrycPierce} prove that a M\"obius graph
$\Gamma$ occurring in a product of $m$ traces and containing $n$ edges
has weight
\begin{equation}\label{eq:Bryc-Pierce-Wick-weight}
 4^{,n-m}(-2)^{\chi(\Gamma)},
 \qquad
 \chi(\Gamma)=v(\Gamma)-e(\Gamma)+f(\Gamma),
\end{equation}
with $\chi$ understood additively over connected components.  Formula
\eqref{eq:Bryc-Pierce-Wick-weight} is not an additional normalization
assumption: after rescaling $Z$ by $1/2$, it is the iterated form of
\eqref{eq:quaternionic-twisted-contraction} and
\eqref{eq:quaternionic-untwisted-contraction}.

\subsection{The Gaussian matrix model}

Let $Z=(z_{ij})\in M_N(\mathbb H)$ have independent entries distributed
as in \eqref{eq:standard-quaternionic-Gaussian}.  In real coordinates,
its probability measure is
\begin{equation}\label{eq:quaternionic-Gaussian-measure}
 d\nu_{\mathbb H}(Z)
 =\left(\frac2\pi\right)^{2N^2}
   \exp\{-2\operatorname{tr}_{\mathbb H}(ZZ^*)\}\,dZ,
\end{equation}
where $dZ$ is Lebesgue measure in the $4N^2$ real coordinates.  Thus
\begin{equation}
 \mathbb E(z_{ij}\bar z_{kl})=\delta_{ik}\delta_{jl}.
\end{equation}
Let $A=\operatorname{diag}(a_1,\ldots,a_N)$ and
$B=\operatorname{diag}(b_1,\ldots,b_N)$ be real diagonal matrices and
put
\[
 M=AZBZ^*.
\]
Although $M$ need not itself be Hermitian, cyclicity of the real trace
shows that its power traces agree with those of
$A^{1/2}ZBZ^*A^{1/2}$ whenever the latter expression is defined.  In
particular they are real; formally, we simply define
$p_r(M)=\operatorname{tr}_{\mathbb H}(M^r)$.

As a first check,
\begin{equation}\label{eq:quaternionic-first-moment}
 \mathbb E\,p_1(AZBZ^*)=p_1(A)p_1(B).
\end{equation}
In degree two, expanding the real trace gives
\[
 p_2(M)=\sum_{i,k,j,l}a_ia_kb_jb_l
 z_{ij}\bar z_{kj}z_{kl}\bar z_{il}.
\]
The two bipartite pairings contribute
$p_2(A)p_1(B)^2$ and $p_1(A)^2p_2(B)$.  For the remaining pairing,
\eqref{eq:quaternionic-twisted-contraction} gives, for two independent
normalized quaternions $z,w$,
\[
 \mathbb E(z\bar w z\bar w)=-\frac12.
\]
Consequently
\begin{equation}\label{eq:quaternionic-second-moment}
 \mathbb E\,p_2(AZBZ^*)
 =p_2(A)p_1(B)^2+p_1(A)^2p_2(B)
 -\frac12p_2(A)p_2(B).
\end{equation}
The coefficient of the non-bipartite pairing is therefore
$-1/2=\alpha-1$ at $\alpha=1/2$, in agreement with the real coefficient
$1=\alpha-1$ at $\alpha=2$.

\subsection{Cauchy deformation and recovery of $\Delta_2(1/2)$}

The analogue of \eqref{eq:real-deformed-integral} is obtained by
inserting $\exp\{2p_1(AZBZ^*)\}$.  Assuming $|a_ib_j|<1$, its mass is
\begin{equation}\label{eq:quaternionic-partition-function}
 \mathcal Z_{\mathbb H}(A,B)
 =\prod_{i,j}(1-a_ib_j)^{-2}
 =\sigma_1(AB)^2.
\end{equation}
Thus the three matrix cases obey the uniform formula
$\mathcal Z_\alpha=\sigma_1(AB)^{1/\alpha}$.
After division by \eqref{eq:quaternionic-partition-function}, the
entries remain independent and the contractions become
\begin{align}
 \left\langle\!\left\langle z_{ij}a z_{kl}b
 \right\rangle\!\right\rangle
 &=-\frac12\frac{\delta_{ik}\delta_{jl}}{1-a_ib_j}\,\bar a b,
 \\
 \left\langle\!\left\langle z_{ij}a\bar z_{kl}b
 \right\rangle\!\right\rangle
 &=\frac{\delta_{ik}\delta_{jl}}{1-a_ib_j}\,
   \operatorname{Re}(a)b.
\end{align}
Expanding each propagator geometrically and then using the operator
normalization of Section~\ref{sec:jack-differential-operators} yields
\begin{align}
 \mathcal H^{\mathbb H}_2
 ={}&\sum_{i,j\geq1}
 \left[\frac12(i,j\mid i+j)+(i+j\mid i,j)\right]
 -\frac12\sum_{k\geq1}(k-1)(k\mid k).
 \label{eq:quaternionic-H2}
\end{align}
Comparison with \eqref{eq:Delta2-alpha} gives
\begin{equation}\label{eq:quaternionic-Delta2}
\boxed{\mathcal H^{\mathbb H}_2=\Delta_2(1/2).}
\end{equation}

\subsection{The weight of a matching}

We now make the rule needed in higher degree explicit.  For the
insertion $p_r(AZBZ^*)$, let $\delta$ be a perfect matching of
$\{1,\bar1,\ldots,r,\bar r\}$ and set
\begin{equation}\label{eq:quaternionic-matching-types}
 A(\delta)=\Lambda(\delta,\delta_{(r)}),
 \qquad
 B(\delta)=\Lambda(\delta,\epsilon),
\end{equation}
using the two fixed matchings of
\eqref{eq:epsilon-matching}--\eqref{eq:delta-lambda-matching}.  Here it
is useful to regard $A(\delta)$ and $B(\delta)$ first as set
partitions of the $r$ Wick edges; attaching positive lengths to the
edges and summing within each block then produces the indices of the
power sums.

The associated M\"obius graph has one vertex, $r$ edges, and
$\ell(A)+\ell(B)$ faces.  Hence
\begin{equation}\label{eq:quaternionic-matching-Euler-characteristic}
 \chi(\delta)=1+\ell(A)+\ell(B)-r.
\end{equation}
After the rescaling in
\eqref{eq:standard-quaternionic-Gaussian}, its Wick weight is
\begin{equation}\label{eq:quaternionic-Wick-matching-weight}
 w_{\mathbb H}(\delta)=\frac14(-2)^{\chi(\delta)}.
\end{equation}
Finally, passage from a coproduct term $p_A\otimes p_B$ to our
differential monomial contributes
$\alpha^{\ell(A)-1}=2^{1-\ell(A)}$.  Thus the coefficient of a
single matching in the operator is
\begin{equation}\label{eq:quaternionic-operator-matching-weight}
 \boxed{
 c_{\mathbb H}(\delta)
 =(-1)^{1+\ell(A)+\ell(B)-r}2^{\ell(B)-r}.}
\end{equation}
This formula separates the two sources of powers of $2$: the Euler
characteristic in Wick's rule and the Jack normalization in the
coproduct-to-operator passage.

\subsection{Recovery of $\Delta_3(1/2)$}

There are $5!!=15$ matchings in degree three.  Up to simultaneous
relabelling of the three Wick edges, their types and Wick weights are
\begin{equation}\label{eq:quaternionic-degree-three-classes}
\begin{array}{c|c|c|c}
\text{multiplicity}&A&B&w_{\mathbb H}\\ \hline
1 &(123)&(1)(2)(3)&1\\
3 &(123)&(1)(23)&-1/2\\
3 &(1)(23)&(12)(3)&1\\
4 &(123)&(123)&1/4\\
3 &(1)(23)&(123)&-1/2\\
1 &(1)(2)(3)&(123)&1
\end{array}
\end{equation}
and the multiplicities sum to $15$.  Edge-labelling the six classes
and applying \eqref{eq:quaternionic-operator-matching-weight} first
recovers the whole top part $\Delta_3^{\mathrm{top}}(1/2)$.  The two
classes of Wick weight $-1/2$ give the remaining contribution
\begin{equation}\label{eq:quaternionic-Delta3-correction}
 -\frac34\sum_{i,j\geq1}(i+j-2)
 \left[\frac12(i,j\mid i+j)+(i+j\mid i,j)\right].
\end{equation}
For example, fixing the isolated edge and summing the two positive
lengths in the other block produces $i+j-2$ possibilities; the three
choices of the isolated edge account for the factor $3$.

At $\alpha=1/2$ the diagonal part of $\Delta'_3(\alpha)$ vanishes,
since $2\alpha-1=0$.  Consequently
\begin{proposition}\label{prop:quaternionic-Delta3}
The quaternionic Gaussian integral with insertion $p_3(AZBZ^*)$
produces
\begin{equation}
 \boxed{
 \mathcal H^{\mathbb H}_3
 =\Delta_3^{\mathrm{top}}(1/2)
  -\frac12\Delta'_3(1/2)
 =\Delta_3(1/2).}
\end{equation}
The monomial patterns of the top part are recovered first, while the
two signed classes give
\eqref{eq:quaternionic-Delta3-correction}.
\end{proposition}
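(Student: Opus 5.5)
The plan is to compute $\mathcal H^{\mathbb H}_3$ class by class from table \eqref{eq:quaternionic-degree-three-classes} and compare it blockwise with \eqref{eq:Delta3-explicit} at $\alpha=1/2$. First I fix the conversion rule. Each matching $\delta$ comes with its two set partitions $A(\delta)$ and $B(\delta)$ of the three Wick edges. Attaching independent positive lengths $l_1,l_2,l_3$ to the edges (from the geometric expansion of the modified propagators), summing within blocks, and weighting by \eqref{eq:quaternionic-operator-matching-weight}, I get a sum of monomials $(I\mid J)$, where $I$ collects the block sums of $A$ and $J$ those of $B$. This is the same passage used for $\mathcal H^{\mathbb H}_2$ in \eqref{eq:quaternionic-H2}, and I take it as already justified there.

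Next I treat the classes with Wick weight $\pm1$ or $1/4$. The class with $A=(123)$, $B=(1)(2)(3)$ gives $c_{\mathbb H}=(-1)^{1+1+3-3}2^{0}=1$ times $(i+j+k\mid i,j,k)$. The class with $A=(1)(2)(3)$, $B=(123)$ gives $c_{\mathbb H}=2^{-2}=\alpha^2$ times $(i,j,k\mid i+j+k)$. The three matchings of type $A=(1)(23)$, $B=(12)(3)$ give $c_{\mathbb H}=2^{-1}=\alpha$ and produce the two mixed families $\alpha(i+k,j\mid i+j,k)$ and $\alpha(i+j,k\mid i,j+k)$, together with the third mixed term. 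The four matchings with $A=B=(123)$ have $c_{\mathbb H}=(-1)^{0}2^{-2}=1/4$ each, for a total of $1$ times $(i+j+k\mid i+j+k)$ after summing over compositions; the top term needs exactly this, counted once per label triple. I must check that the label sums reproduce the six top sums of \eqref{eq:Delta3-explicit} with the correct multiplicities. This bookkeeping is the main obstacle, because relabelling symmetries of the edges can create overcounting. I would resolve it by matching each individual bipartite matching (indexed by $\tau\in\Sym_3$, as in \eqref{eq:A-mu-explicit}) to one summand of the top part. I expect the four $(123),(123)$ matchings to split as bipartite versus non-bipartite contributions whose diagonal part cancels, consistent with $2\alpha-1=0$.

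Then the two classes of weight $-1/2$ (multiplicity $3$ each, with $A=(123)$, $B=(1)(23)$ and with $A=(1)(23)$, $B=(123)$) have $c_{\mathbb H}=-1\cdot2^{-1}$ and $-1\cdot 2^{-2}\cdot(-1)$ adjusted by the formula. I evaluate them by fixing the merged indices and counting the $i+j-2$ internal cuts, which yields \eqref{eq:quaternionic-Delta3-correction}. Finally, I evaluate $(\alpha-1)\Delta'_3(\alpha)$ at $\alpha=1/2$. The diagonal block vanishes because of the factor $2\alpha-1$, and the binary block becomes $-\tfrac12\cdot\tfrac32\sum(i+j-2)[\tfrac12(i,j\mid i+j)+(i+j\mid i,j)]$, which is exactly \eqref{eq:quaternionic-Delta3-correction}. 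Together with the top part this gives $\mathcal H^{\mathbb H}_3=\Delta_3(1/2)$. As a safeguard, I would compare both sides on $\widehat p_3$ and check them against the expression obtained from \eqref{eq:Delta3-B} at $\alpha=1/2$.
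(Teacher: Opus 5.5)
Your overall route is the paper's. You evaluate the fifteen matchings class by class from \eqref{eq:quaternionic-degree-three-classes} with the weight \eqref{eq:quaternionic-operator-matching-weight}. The four classes of Wick weight $1$ and $\tfrac14$ give $\Delta_3^{\mathrm{top}}(1/2)$, the two classes of weight $-\tfrac12$ give \eqref{eq:quaternionic-Delta3-correction}, and $2\alpha-1=0$ kills the diagonal block. Your class-level count for $A=B=(123)$ (four matchings, each $+\tfrac14$, total $1$) is right.

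However, the mechanism you propose to settle the bookkeeping would fail. Matching each bipartite matching, indexed by $\tau\in\Sym_3$, to one summand of the top part works for five of the six $\tau$. It fails for $\tau=\sigma$: the unique bipartite matching of type $(123)\mid(123)$ contributes only $\tfrac14\sum_{i,j,k}(i+j+k\mid i+j+k)$, whereas the top part needs coefficient $1$. The missing $\tfrac34$ comes from the three non-bipartite matchings of the same type. These have the same $A$ and $B$, hence the same $\chi=0$ and the same coefficient $+\tfrac14$. Nothing cancels among the four, and the non-bipartite diagonal contribution is $+\tfrac34$, not $0$. So at $\alpha=1/2$ the split into top part and correction must be made by Wick class, not by bipartiteness. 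This differs from the real case of \cref{prop:real-Delta3}, where the bipartite diagonal matching alone gives $1$. If you carried out your per-$\tau$ identification literally, you would find a discrepancy of $\tfrac34$ on each side of the split: $\tfrac14$ versus $1$ for the top part, and $\tfrac34$ versus $0$ for the correction.

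Two smaller points. First, the coefficient of the class $A=(1)(23)$, $B=(123)$ is $c_{\mathbb H}=(-1)^{1+2+1-3}2^{1-3}=-\tfrac14$. The expression you wrote evaluates to $+\tfrac14$, which would reverse the sign of the $(i,j\mid i+j)$ part of the correction. Second, a single matching of type $A=(123)$, $B=(1)(23)$ whose isolated $B$-edge has length $i$ contributes $(j-1)(i+j\mid i,j)$, not $(i+j-2)(i+j\mid i,j)$. The coefficient $\tfrac32(i+j-2)$ appears only after summing the three matchings and symmetrizing in $i,j$, which is legitimate because $(i+j\mid i,j)$ is symmetric. This gives $-\tfrac12\cdot\tfrac32(i+j-2)$, and similarly $-\tfrac14\cdot3\cdot\tfrac12(i+j-2)=-\tfrac38(i+j-2)$ for $(i,j\mid i+j)$. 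With these corrections your argument closes exactly as in the paper.
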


\subsection{Recovery of $\Delta_4(1/2)$}

Degree four involves $7!!=105$ matchings.  The following table is a
complete classification up to simultaneous relabelling of the four
Wick edges.  The last column is the coefficient of one matching after
the operator normalization, not yet multiplied by the class size.
\begin{equation}\label{eq:quaternionic-degree-four-classes}
\begin{array}{r|c|c|r}
m&A&B&c_{\mathbb H}\\ \hline
1 &(1)(2)(3)(4)&(1234)&1/8\\
4 &(1)(2)(34)&(123)(4)&1/4\\
6 &(1)(2)(34)&(1234)&-1/8\\
2 &(1)(2)(34)&(13)(24)&1/4\\
4 &(1)(234)&(12)(3)(4)&1/2\\
4 &(1)(234)&(12)(34)&-1/4\\
8 &(1)(234)&(123)(4)&-1/4\\
16&(1)(234)&(1234)&1/8\\
2 &(12)(34)&(1)(23)(4)&1/2\\
4 &(12)(34)&(1)(234)&-1/4\\
5 &(12)(34)&(1234)&1/8\\
1 &(12)(34)&(13)(24)&-1/4\\
1 &(1234)&(1)(2)(3)(4)&1\\
6 &(1234)&(1)(2)(34)&-1/2\\
16&(1234)&(1)(234)&1/4\\
5 &(1234)&(12)(34)&1/4\\
20&(1234)&(1234)&-1/8
\end{array}
\end{equation}
The entries in the first column sum to $105$, and every entry in the
last column follows directly from
\eqref{eq:quaternionic-operator-matching-weight}.

Attach positive lengths $u_1,u_2,u_3,u_4$ to the Wick edges and write
$s=u_1+u_2+u_3+u_4$.  Regrouping the edge-labelled contributions of
\eqref{eq:quaternionic-degree-four-classes} by their resulting
differential monomial gives the five blocks
\begin{align}
&-\frac14\sum_{i,j,k,l\geq1}
 (i+j,k+l\mid i+l,j+k),
 \label{eq:quaternionic-Delta4-block-22}\\
&-\sum_{i,j,k\geq1}(i+j+k-3)
 \left[\frac14(i,j,k\mid i+j+k)
 +(i+j+k\mid i,j,k)\right],
 \label{eq:quaternionic-Delta4-block-31}\\
&-\sum_{i,j,k\geq1}(2i+j+k-4)
 (i+j,k\mid i+k,j),
 \label{eq:quaternionic-Delta4-block-mixed}\\
&-\frac12\sum_{i,j\geq1}F_{ij}(1/2)
 \left[\frac12(i,j\mid i+j)+(i+j\mid i,j)\right],
 \label{eq:quaternionic-Delta4-block-21}\\
&-\frac52\sum_{k\geq1}\binom{k-1}{3}(k\mid k).
 \label{eq:quaternionic-Delta4-block-11}
\end{align}
For orientation, the first line comes from the unique crossed class
$(12)(34)\mid(13)(24)$.  The last line comes from the $20$ matchings
of type $(1234)\mid(1234)$, each of coefficient $-1/8$; the binomial
coefficient counts compositions of $k$ into four positive parts.
The intermediate polynomial $F_{ij}$ is the one defined in
\eqref{eq:Fij-alpha}.

The coefficients in
\eqref{eq:quaternionic-Delta4-block-22}--
\eqref{eq:quaternionic-Delta4-block-11} are respectively the
specializations at $\alpha=1/2$ of the five lines of
$(\alpha-1)\Delta'_4(\alpha)$.  Indeed,
\[
 \alpha(\alpha-1)=-\frac14,
 \quad 2(\alpha-1)=-1,
 \quad 4\alpha(\alpha-1)=-1,
\]
and
\[
 (\alpha-1)(6\alpha^2-5\alpha+6)=-\frac52.
\]
We have therefore proved the quaternionic specialization of
Conjecture~\ref{conj:Delta4-explicit}.

\begin{proposition}\label{prop:quaternionic-Delta4}
The quaternionic Gaussian integral with insertion $p_4(AZBZ^*)$
produces
\begin{equation}\label{eq:quaternionic-Delta4}
 \boxed{
 \mathcal H^{\mathbb H}_4
 =\Delta_4^{\mathrm{top}}(1/2)
  -\frac12\Delta'_4(1/2)
 =\Delta_4(1/2).}
\end{equation}
\end{proposition}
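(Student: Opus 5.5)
The plan is to compute $\mathcal H^{\mathbb H}_4$ matching by matching, in the same way as the degree-three case, and then compare the result with the specialization $\alpha=1/2$ of \cref{conj:Delta4-explicit}. We start from the Cauchy-deformed quaternionic integral with insertion $p_4(AZBZ^*)$. After division by $\mathcal Z_{\mathbb H}(A,B)=\sigma_1(AB)^2$, Wick's theorem writes the expectation as a sum over the $105$ perfect matchings $\delta$ of $\{1,\bar1,\ldots,4,\bar4\}$. Each Wick edge carries the geometric propagator $\sum_{u\ge1}(a_ib_j)^{u-1}$, shifted by the insertion weight, so that the edge effectively receives a positive length $u$. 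A matching $\delta$ then produces
\[
 \sum_{u_1,\ldots,u_4\ge1}p_{A(\delta)[u]}(A)\,p_{B(\delta)[u]}(B),
\]
where $A(\delta)[u]$ and $B(\delta)[u]$ are the block sums of the lengths over the set partitions $A(\delta)$ and $B(\delta)$ of \eqref{eq:quaternionic-matching-types}. The matching carries the scalar $c_{\mathbb H}(\delta)$ of \eqref{eq:quaternionic-operator-matching-weight}. That scalar combines the Bryc--Pierce weight \eqref{eq:quaternionic-Wick-matching-weight}, with $\chi(\delta)$ given by \eqref{eq:quaternionic-matching-Euler-characteristic}, and the coproduct-to-operator factor $2^{1-\ell(A)}$. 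We take these ingredients as established.

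The second step is the classification in \eqref{eq:quaternionic-degree-four-classes}. The $24$ matchings that are bipartite, that is, encoded by permutations, have $\ell(A)+\ell(B)=r+1$ minus the genus defect. For exactly these we check that $c_{\mathbb H}(\delta)=2^{1-\ell(I)}=\alpha^{\ell(I)-1}$. Their edge-labelled contributions therefore reproduce $\mathcal A_{(4)}$ with each monomial $(I\mid J)$ rescaled by $\alpha^{\ell(I)-1}$, which is $\Delta_4^{\mathrm{top}}(1/2)$ by definition. The real case \eqref{eq:real-X0} follows the same pattern, so this step is only bookkeeping.

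The third step handles the $81$ non-bipartite matchings. We group them by the shape of the differential monomial they produce, which gives five monomial types: $(2,2\mid2,2)$ crossed, $(3\mid1^3)$ together with $(1^3\mid3)$, the mixed type $(2,1\mid2,1)$, the binary types $(1^2\mid2)$ and $(2\mid1^2)$, and the diagonal type $(1\mid1)$. For each type we count the labelings that yield a given monomial. The crossed class gives one term per $(i,j,k,l)$. Compositions of $k$ into four parts give $\binom{k-1}{3}$. The ternary and mixed types pick up the linear factors $i+j+k-3$ and $2i+j+k-4$ from the position of the internal cut. The binary types collect quadratic counts, and we sum these over classes to obtain $-\tfrac12F_{ij}(1/2)$. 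Together this gives the blocks \eqref{eq:quaternionic-Delta4-block-22}--\eqref{eq:quaternionic-Delta4-block-11}. Finally we check the scalar identities $\alpha(\alpha-1)=-\tfrac14$, $2(\alpha-1)=-1$, $4\alpha(\alpha-1)=-1$ and $(\alpha-1)(6\alpha^2-5\alpha+6)=-\tfrac52$, together with the binary coefficient $(\alpha-1)F_{ij}(\alpha)$ at $\alpha=1/2$. These identify the sum with $(\alpha-1)\Delta'_4(\alpha)$ at $\alpha=1/2$, which is $-\tfrac12\Delta'_4(1/2)$.

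The main obstacle is the binary block. Several classes, with signs of both kinds, contribute to $(i,j\mid i+j)$ and $(i+j\mid i,j)$ through different quadratic counting functions of $(i,j)$. The sum must collapse to $F_{ij}(1/2)=2\binom{i-1}{2}+2\binom{j-1}{2}+\tfrac32(i-1)(j-1)-(i+j-2)(i+j-3)$, with the relative factor $\tfrac12$ between the two orientations. I would first compute each class's count as a polynomial in $(i,j)$ by fixing which edges fall into each block. As a safeguard I would then confirm the final identity on a finite grid of small $(i,j)$, and check it against the mechanical enumeration at total weight at most nine, as in the real case.
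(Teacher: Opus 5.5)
\textbf{There is a genuine gap.} Your overall route is the paper's: classify the $105$ matchings, weight each by $c_{\mathbb H}(\delta)$, regroup by monomial type, and check the scalar identities. However, the split you use in your second and third steps (bipartite gives the top part, non-bipartite gives the correction) is false in the quaternionic model. It breaks exactly at the binary block you flagged.

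\textbf{Where step two fails.} For a bipartite matching of genus $g$ one has $\ell(A)+\ell(B)=r+1-2g$. So \eqref{eq:quaternionic-operator-matching-weight} gives $c_{\mathbb H}(\delta)=4^{-g}\,2^{1-\ell(A)}$, not $2^{1-\ell(A)}$. The identity $c_{\mathbb H}(\delta)=\alpha^{\ell(I)-1}$ at $\alpha=1/2$ therefore holds only for the $14$ planar matchings ($\chi=2$), not for all $24$ bipartite ones.

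The other $10$ bipartite matchings are tori. There are four in each of the classes $(1)(234)\mid(1234)$ and $(1234)\mid(1)(234)$, and one in each of $(12)(34)\mid(1234)$ and $(1234)\mid(12)(34)$. Writing $c(\cdot)$ for the number of cycles, they correspond to the $\tau\in\Sym_4$ with $c(\tau)+c(\sigma\tau)=3$. These permutations produce the entire binary part of $\Delta_4^{\mathrm{top}}(1/2)$, namely $\sum_{i,j}T_{ij}\bigl[\tfrac12(i,j\mid i+j)+(i+j\mid i,j)\bigr]$ with $T_{ij}=2\binom{i-1}{2}+2\binom{j-1}{2}+(i-1)(j-1)$. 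The torus matchings, however, supply only $\tfrac14T_{ij}$.

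The analogy with \eqref{eq:real-X0} is misleading. Every real matching has Wick weight $1$, whereas the quaternionic weight $\tfrac14(-2)^{\chi}$ costs a factor $\tfrac14$ per handle.

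\textbf{Consequence for step three.} The missing $\tfrac34T_{ij}$ comes from the $32$ Klein-bottle matchings in the same four classes. These carry the same weight, since $c_{\mathbb H}$ depends only on $\chi$. Hence the $81$ non-bipartite matchings do not produce the five blocks.

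In the binary types they give $\tfrac34T_{ij}+\tfrac14(i-1)(j-1)=\tfrac32\binom{i-1}{2}+\tfrac32\binom{j-1}{2}+(i-1)(j-1)$. By contrast, $-\tfrac12F_{ij}(1/2)=\tfrac14(i-1)(j-1)$, because $F_{ij}(1/2)=-\tfrac12(i-1)(j-1)$. So the collapse you plan cannot happen, and your grid check would report a mismatch.

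The same effect is already visible in degree three. The class $(123)\mid(123)$ contains one torus and three Klein bottles, each of weight $\tfrac14$. Only together do they give the coefficient $1$ of $\sum_k\binom{k-1}{2}(k\mid k)$ in $\Delta_3^{\mathrm{top}}(1/2)$. This is why the paper says that the six classes, not the six bipartite matchings, recover the top part.

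\textbf{How to repair it.} Sort matchings by Euler characteristic instead of bipartiteness. Since $\ell(A)+\ell(B)=\chi+3$, the values $\chi=2,1,0,-1$ produce disjoint sets of monomial shapes:
\begin{itemize}
\item the $14$ matchings with $\chi=2$ give the genus-zero part of the top operator;
\item the $29$ with $\chi=1$ give \eqref{eq:quaternionic-Delta4-block-22}, \eqref{eq:quaternionic-Delta4-block-31} and \eqref{eq:quaternionic-Delta4-block-mixed};
\item the $42$ with $\chi=0$, tori and Klein bottles together, give the genus-one part of the top operator plus \eqref{eq:quaternionic-Delta4-block-21};
\item the $20$ with $\chi=-1$ give \eqref{eq:quaternionic-Delta4-block-11}.
\end{itemize}
Equivalently, compare the full class totals with $\Delta_4^{\mathrm{top}}(1/2)-\tfrac12\Delta'_4(1/2)$ type by type, which is what the paper's table and regrouping accomplish.
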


As in the real case, the enumeration was also checked mechanically by
generating all perfect matchings and all positive edge-labellings of
total weight at most nine.  The comparison involves the same $49$
operator types for $\Delta'_3(1/2)$ and $154$ operator types for
$\Delta'_4(1/2)$, with no discrepancy.

Combining the real, complex, and quaternionic models, we obtain for
$r=2,3,4$
\begin{equation}\label{eq:three-matrix-specializations}
 \boxed{
 \mathcal H^{\mathbb R}_r=\Delta_r(2),\qquad
 \mathcal H^{\mathbb C}_r=\Delta_r(1),\qquad
 \mathcal H^{\mathbb H}_r=\Delta_r(1/2).}
\end{equation}
The crossed block
\[
 \alpha(\alpha-1)
 \sum_{i,j,k,l\geq1}(i+j,k+l\mid i+l,j+k)
\]
is especially revealing: its coefficient is $0$, $2$, and $-1/4$ in
the complex, real, and quaternionic cases, respectively.  It is absent
from the permutation model, positive in the real matching model, and
signed in the quaternionic M\"obius-graph model.

\section{Conclusions and open problems}
\label{sec:conclusions}

The stable-series construction developed in this paper places several
phenomena in a common framework.  In the classical specialization,
the series $\widehat f$ turn stable multiplication into differential
operators and recover the Ivanov--Kerov algebra together with shifted
symmetric functions.  The Goulden--Jackson product deforms this
construction to Jack symmetric functions.  Its diagonal action on the
Jack basis identifies the corresponding eigenvalues with shifted Jack
polynomials and gives, in particular, the shifted Pieri relation.

The differential-operator point of view is effective even when a
closed general formula is not yet available.  The operator
$\Delta_2(\alpha)$ is obtained exactly, while the calculations in
Section~\ref{sec:low-degree-jack-operators} lead to explicit candidate
formulas for $\Delta_3(\alpha)$ and $\Delta_4(\alpha)$.  We summarize
the logical status of the results used in the paper as follows.
\begin{center}
\begin{tabular}{p{0.61\textwidth}|p{0.25\textwidth}}
result&status\\ \hline
stable-series construction and classical differential operators
 & proved\\
diagonal action and shifted Jack Pieri formula
 & proved\\
formula for $\Delta_2(\alpha)$
 & proved\\
general formulas for $\Delta_3(\alpha)$ and $\Delta_4(\alpha)$
 & computer-assisted conjectures\\
real specialization of the proposed formulas at $\alpha=2$
 & verified by exhaustive Wick enumeration\\
quaternionic specialization of the proposed formulas at
$\alpha=1/2$
 & verified by exhaustive signed Wick enumeration
\end{tabular}
\end{center}
The last two statements mean that the matrix-integral operators agree
term by term with the displayed candidate formulas at the indicated
specializations.  They do not, by themselves, prove the formulas for
an indeterminate value of $\alpha$.

The matrix models nevertheless explain a structural feature which is
hard to see from the formulas alone.  Bipartite Wick matchings produce
the monomial patterns already visible in the permutation model.
Non-bipartite matchings produce the additional contractions.  In the
quaternionic model these contractions carry signs and powers of two
controlled by the Euler characteristic of the associated M\"obius
graph.  A particularly transparent example is
\begin{equation}\label{eq:conclusion-crossed-block}
 \alpha(\alpha-1)
 \sum_{i,j,k,l\geq1}
 (i+j,k+l\mid i+l,j+k).
\end{equation}
Its coefficient is respectively $0$, $2$, and $-1/4$ for
$\alpha=1,2,1/2$.  Thus the same contraction is absent from the
permutation model, counted positively in the real model, and signed
in the quaternionic model.

Several natural questions remain open.

First, the candidate formulas for $\Delta_3(\alpha)$ and
$\Delta_4(\alpha)$ should be derived by a complete normal-ordering
argument, or by a conceptual construction which makes their
polynomial dependence on $\alpha$ manifest.  Such a proof should also
clarify how to construct $\Delta_\mu(\alpha)$ directly for an
arbitrary partition $\mu$.

Second, the elementary-symmetric operators deserve a separate
explanation.  Their deformed form is strikingly close to the classical
Nazarov--Sklyanin form, whereas the power-sum operators acquire the
non-permutation corrections exhibited above.  Understanding this
contrast may provide a more intrinsic construction of the commuting
Jack operators.

Third, it would be desirable to interpolate the real, complex, and
quaternionic contraction rules without first knowing the answer as a
polynomial in $\alpha$.  The appearance of perfect matchings suggests
a possible relation with non-orientability statistics and with the
Matching--Jack problem.  We do not pursue that question here: a
reproducible comparison requires a precise deletion algorithm and a
fully specified measure of non-orientability, and lies beyond the
verified calculations of the present paper.

The main outcome is therefore not a closed formula for every Jack
operator, but a framework in which stability, shifted symmetry,
differential operators, and the three classical matrix models become
different manifestations of the same construction.

\section*{Acknowledgments}
The author used ChatGPT (OpenAI) as an interactive aid in organizing
the manuscript, editing the English text, and checking
computer-algebra calculations.  All mathematical statements and
conclusions remain the responsibility of the author.


\bibliographystyle{plain}

%

\bibliography{references}

\end{document}